\theoremstyle{definition}
\newtheorem{theorem}{Theorem}
\newtheorem{lemma}[theorem]{Lemma}
\newtheorem{corollary}[theorem]{Corollary}
\newtheorem{claim}{Claim}
\newtheorem{reftheorem}{Theorem}
\newtheorem{problem}{Problem}
\newcommand{\Cforw}{{\overrightarrow{C}}}
\newcommand{\Cback}{{\overleftarrow{C}}}
\newcommand{\claimproof}{%
\smallbreak\noindent%
\emph{Proof.}%
}
\newcommand{\naturalnumbers}{{\boldsymbol{N}}}
\newcommand{\GG}{\mathcal{G}}
\newcommand{\HH}{\mathcal{H}}
\newcommand{\qHH}{\HH/\!\equiv}
\begin{document}
\title{Preorder Induced by Rainbow Forbidden Subgraphs}
\author{Shun-ichi Maezawa
\thanks{Department of Information Science, Nihon University, Sakurajosui 3--25--40, Setagaya-Ku Tokyo 156-8550, Japan.
\texttt{maezawa.mw@gmail.com, saitou.akira@nihon-u.ac.jp}}
\thanks{Research supported by JSPS KAKENHI Grant number JP20H05795 and JP22K13956}
\and Akira Saito
\footnotemark[1]
\thanks{Research supported by JSPS KAKENHI Grant number JP20K11684 and JP24K06835}
}
\date{}
\maketitle
\begin{abstract}
A subgraph $H$ of an edge-colored graph $G$
is rainbow if all the edges of $H$ receive different colors.
If $G$ does not contain a rainbow subgraph isomorphic to $H$,
we say that $G$ is rainbow
$H$-free.
For connected graphs $H_1$ and $H_2$,
if every rainbow $H_1$-free edge-colored complete graph
colored in sufficiently many colors is rainbow $H_2$-free,
we write $H_1\le H_2$.
The binary relation $\le$ is reflexive and transitive,
and hence it is a preorder.
If $H_1$ is a subgraph of $H_2$,
then trivially $H_1\le H_2$ holds.
On the other hand,
there exists a pair $(H_1, H_2)$ such that $H_1$ is a proper supergraph
of $H_2$ and $H_1\le H_2$ holds.
Cui et al.~[Discrete Math.~\textbf{344} (2021) Article Number 112267]
characterized these pairs.
In this paper,
we investigate the pairs $(H_1, H_2)$ with $H_1\le H_2$
when neither $H_1$ nor $H_2$ is a subgraph of the other.
We prove that there are many such pairs and investigate their structure with respect to $\le$.
\end{abstract}
\medbreak\noindent
\textbf{Keywords.}\quad
rainbow forbidden subgraph,
preorder,
edge-colored graph
\\
\textbf{AMS classification}.\quad
05C15
\section{Introduction}
For graphs $H$ and $G$,
we say that $G$ is $H$-free
if $G$ does not contain an induced subgraph
isomorphic to $H$.
If $G$ is $H$-free,
we also say that $H$ is forbidden in $G$.
\par
For an appropriate choice of $H$ and a graph property $P$,
we can observe a phenomenon that
the class of $H$-free graphs is well-behaved
with respect to $P$.
The study of these combinations $(H, P)$ is called forbidden subgraphs.
It is one of the popular topics in graph theory.
\par
For the sake of the subsequent discussions,
we restate this topic
in a more formal manner.
In order to avoid set-theoretic ambiguity,
we only consider finite simple graphs and we take vertices
from the set of natural numbers $\naturalnumbers$.
Define $\HH_0$ by
\[
\HH_0 = \{H\colon \text{$H$ is a finite graph with $V(H)\subset\naturalnumbers$}\}.
\]
We sometimes consider the quotient set of $\HH_0$ with respect to
the equivalence relation based on the isomorphism.
But as in standard textbooks of graph theory,
we slightly abuse the notation and adopt the same symbol for
both $\HH_0$ and its quotient set.
\par
For $H_1, H_2\in\HH_0$,
we write $H_1\preceq H_2$ if $H_1$ is an induced subgraph of $H_2$.
It is easy to see that $\preceq$ is a partial order in $\HH_0$.
We introduce another binary relation based on
forbidden subgraphs.
We write $H_1\le_F H_2$ if there exists a constant
$t=t(H_1, H_2)$ such that every $H_1$-free graph
of order $t$ or more is $H_2$-free.
Trivially,
the binary relation $\le_F$ is reflexive and transitive,
and hence it is a preorder.
On the other hand,
it is not anti-symmetric.
Fujita,
Furuya and Ozeki~\cite{FFO}
gave a pair of non-isomorphic graphs $H_1$ and $H_2$
which satisfies both $H_1\le_F H_2$ and $H_2\le_F H_1$. 
\par
It is easy to see that $H_1\preceq H_2$ implies $H_1\le_F H_2$.
On the other hand,
in the case of $H_2\preceq H_1$,
we cannot expect $H_1\le_F H_2$
unless $H_1=H_2$.
Let $H_1$ and $H_2$ be connected graphs in $\HH_0$
and suppose $H_2$ is a proper induced subgraph of $H_1$,
then for every positive integer $n$,
the graph consisting of $n$ disjoint copies of $H_2$
is $H_1$-free,
but not $H_2$-free.
\par
In this paper,
we study an analogue of the above forbidden subgraphs
in edge-colored graphs.
For
a graph $G$,
we associate a mapping $c\colon E(G)\to\naturalnumbers$
and call the pair $(G, c)$ an \textit{edge-colored graph}.
For $e\in E(G)$,
we call $c(e)$ the \textit{color} of $e$.
Note that we do not require $c$ to be proper,
i.e.~$G$ may contain two different edges receiving the same color
incident with a common vertex.
An edge-colored graph $(G, c)$ is said to be \textit{rainbow}
if $c$ is injective,
i.e.~all the edges in $G$ receive different colors.
For $H\in \HH_0$,
we say that $(G, c)$ is \textit{rainbow $H$-free\/}
if $G$ does not contain a rainbow subgraph
which is isomorphic to $H$.
\par
As far as we know,
the study of forbidden rainbow subgraphs
was initiated by Gallai~\cite{Gallai}.
He introduced the notion of Gallai coloring,
which is a rainbow $C_3$-free edge-colored complete graph.
He gave a constructive characterization of Gallai coloring
and investigated its properties.
Gallai coloring is still actively studied,
and other rainbow forbidden subgraphs have also been studied since then.
For the readers who are interested in this subject,
we refer them to~\cite{MN2020}.
\par
In the study of rainbow forbidden subgraphs,
we often restrict ourselves to complete graphs
as a research domain,
as Gallai did for Gallai coloring.
One reason for this is that
for an edge-colored graph $(G, c)$,
we assign a new color to the edges of the complement of $G$,
and embed $(G, c)$ in an edge-colored complete graph.
In this paper,
we also restrict ourselves to edge-colored complete graphs.
\par
For $H_1, H_2\in\GG$,
we write $H_1\le H_2$ if
there exists a positive integer $t=t(H_1, H_2)$ such that
every rainbow $H_1$-free edge-colored complete graph
colored in $t$ or more colors
is rainbow $H_2$-free.
There is a rationale for the study of $\le$.
Suppose $H_1\le H_2$ and every rainbow $H_2$-free edge-colored complete
graph colored in sufficiently many colors
satisfies some graph property $P$.
Then every rainbow $H_1$-free edge-colored complete graph
colored in sufficiently many colors
also satisfies $P$.
Therefore,
the study of $P$ for rainbow $H_1$-free graphs means little.
In this sense,
the study of the relation $\le$ can
facilitate the research of rainbow forbidden subgraphs.
\par
For $H_1, H_2\in\HH_0$,
if $H_1$ is a subgraph of $H_2$,
we write $H_1\subseteq H_2$.
Note that $H_1\subseteq H_2$ does not mean
$H_1$ is an \lq\lq
induced'' subgraph of $H_2$.
If $H_1\subseteq H_2$ and $H_1\ne H_2$,
we write $H_1\subsetneq H_2$ and
we say that $H_2$ is a proper supergraph of $H_1$.
If $H_1\subseteq H_2$,
then trivially $H_1\le H_2$ holds.
This fact is similar to that of the traditional forbidden subgraphs
in the uncolored graphs.
However,
for rainbow forbidden subgraphs,
$H_1\le H_2$ may hold even if $H_1$ is a proper supergraph of $H_2$.
Bass et al.~\cite{BMOP} first reported this phenomenon.
Let $K^+_{1, k}$ be the graph obtained from $K_{1, k}$,
the star of order $k+1$,
by subdividing one edge with one vertex
(Figure~\ref{star_plus}).
\begin{figure}
\centering
\includegraphics[width=0.4\textwidth]{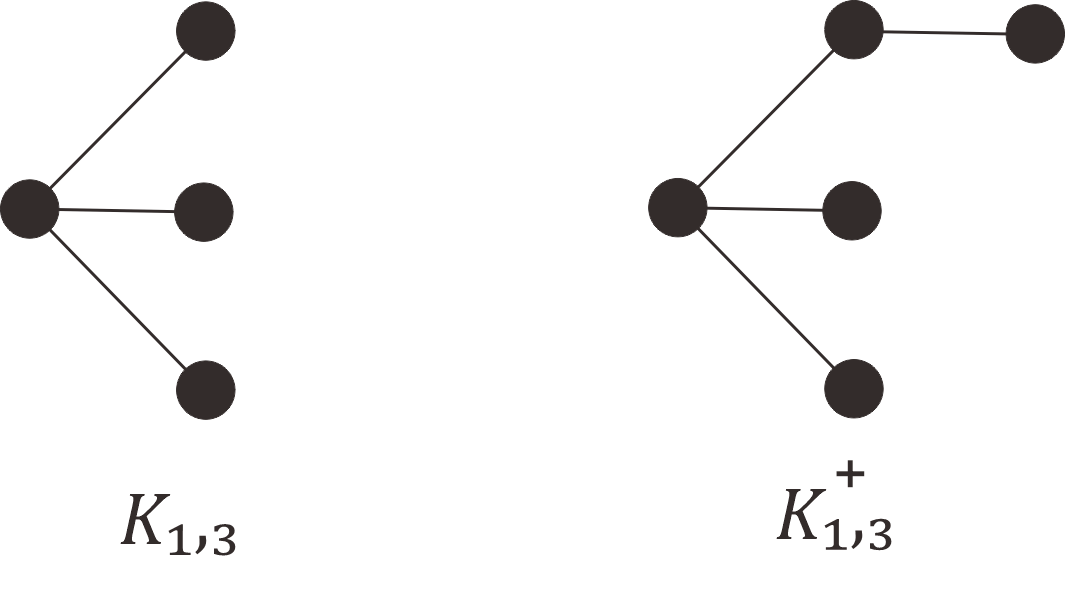}
\caption{$K_{1,3}$ and $K^+_{1,3}$}
\label{star_plus}
\end{figure}
They proved that every rainbow $K_{1,3}^+$-free edge-colored complete graph
colored in $5$ or more colors is rainbow $K_{1,3}$-free.
Cui et al.~\cite{CLMS} made a deeper analysis
and determined when it happens.
\begin{reftheorem}[\cite{CLMS}]
Let $H_1, H_2\in \HH_0$.
Suppose $|H_2|\ge 4$.
Then both $H_2\subsetneq H_1$ and $H_1\le H_2$ hold
if and only if $(H_1, H_2)=(K_{1,k}^+, K_{1,k})$
for some $k\ge 3$.
\label{clms}
\end{reftheorem}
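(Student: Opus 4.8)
I split the statement into its two directions and treat them in turn.

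The ``if'' direction has a trivial half, $K_{1,k}\subsetneq K^+_{1,k}$ (the centre of $K^+_{1,k}$ has degree $k$, so $K_{1,k}$ sits inside it), and a quantitative half, $K^+_{1,k}\le K_{1,k}$, which I would prove in contrapositive form by bounding the number of colours in any rainbow $K^+_{1,k}$-free complete graph that still contains a rainbow $K_{1,k}$. Fix such a rainbow star, with centre $v$, leaves $u_1,\dots,u_k$, and colour set $C=\{c_1,\dots,c_k\}$. The key \emph{trapping lemma} is that every edge $u_ix$ from a leaf to a vertex $x\notin\{v,u_1,\dots,u_k\}$ must have its colour in $C$; otherwise $v$, its $k$ star-edges, and the pendant $u_ix$ form a rainbow $K^+_{1,k}$. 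A short exchange argument then refines this: reading one outside vertex and then another as the subdivision vertex of a prospective $K^+_{1,k}$ (dropping one suitably chosen leaf each time) shows that the edges from $v$ to the outside and the edges among the outside vertices together introduce at most one new colour $d\notin C$. Thus only the $\binom{k}{2}$ edges among the leaves remain unconstrained, the total number of colours is at most $\binom{k}{2}+k+1$, and $t(k)=\binom{k}{2}+k+2$ works.

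For the ``only if'' direction I assume $H_2\subsetneq H_1$, $|H_2|\ge 4$, and $H_1\le H_2$, and aim to recover the pair. The engine is a small library of colourings of $K_n$ with \emph{unboundedly many} colours, each restricting its rainbow subgraphs: (C1) the monotone colouring $c(\{i,j\})=\min(i,j)$, whose rainbow subgraphs are exactly the forests (so in particular it has no rainbow triangle); (C2) a rainbow star at one vertex with all other edges sharing a single colour, whose rainbow subgraphs are exactly those embeddable in some $K_{1,j}+e$; and (C3) a partition into rainbow cliques $K_{m-1}$ with all cross-edges in one colour, whose rainbow subgraphs have maximum degree at most $m-1$. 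Each carries many colours yet avoids rainbow copies of large, cyclic, or high-degree graphs, so whenever such a colouring contains a rainbow $H_2$, the relation $H_1\le H_2$ forces it to contain a rainbow $H_1$, transferring the same restriction to $H_1$.

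Granting (see below) that $H_2$ is a star $K_{1,k}$ — with $k\ge 3$ since $|H_2|\ge 4$ — the identification of $H_1$ is clean. Each of (C1)--(C3) has many colours and a rainbow $K_{1,k}=H_2$, hence a rainbow $H_1$. From (C1), $H_1$ is a forest, so (being connected) a tree; from (C2), $H_1$ has a vertex $w_0$ with $H_1-w_0$ having at most one edge, and since a tree has no chord this edge must attach a new leaf to an existing one, so $H_1$ is either $K_{1,m}$ or the subdivided star $K^+_{1,m}$. Applying (C3) with parameter $m$ (rainbow $K_{1,m}$-free, hence rainbow $K^+_{1,m}$-free, yet still carrying a rainbow $K_{1,k}$) excludes $m>k$ in both cases, while $K_{1,k}\subseteq H_1$ forces the centre-degree $m\ge k$; the star case is killed the same way. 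Hence $H_1=K^+_{1,k}$. Finally the same three constructions rule out every connected proper subgraph of $K^+_{1,k}$ on at least four vertices except $K_{1,k}$ itself (a smaller star $K_{1,j}$ or smaller $K^+_{1,j}$ is defeated by (C3) with parameter $j+1$), leaving $H_2=K_{1,k}$, which is consistent by the already-proven $K^+_{1,k}\le K_{1,k}$.

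The main obstacle is precisely the step I granted: proving $H_2$ is a star. Since $H_2$ is connected with $|H_2|\ge 4$, this is equivalent to showing $H_2$ has matching number $1$ (a connected graph whose edges pairwise intersect is a star or a triangle, and $|H_2|\ge 4$ excludes $K_3$), so the real content is that two independent edges in $H_2$ rule out every supergraph $H_1$ with $H_1\le H_2$. This cannot be settled by one colouring: for each such $H_2$ and each $H_1\supsetneq H_2$ one must exhibit an unboundedly-coloured colouring containing a rainbow $H_2$ but no rainbow $H_1$. When $H_2$ is a forest this splits into $H_1$ cyclic (handled at once by (C1)) and $H_1$ a strictly larger tree — the genuinely delicate case, needing colourings whose rainbow trees are size-bounded while still realising a rainbow $H_2$, in the spirit of anti-Ramsey numbers for paths and trees (e.g.\ separating a rainbow $P_5$ from a rainbow $P_6$ by controlling the longest rainbow path); and when $H_2$ contains a cycle one needs high-colour colourings exhibiting a rainbow $H_2$ yet no rainbow $H_1$. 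The tension throughout is that many colours tends to force a large rainbow substructure, which usually already contains $H_1\supseteq H_2$; only carefully structured, anti-Ramsey--extremal colourings thread this needle, and organising that case analysis is where the bulk of the work lies.
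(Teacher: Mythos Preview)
This theorem is not proved in the present paper; it appears as Theorem~\ref{clms}, quoted from~\cite{CLMS} without proof. There is therefore no argument here against which to compare your attempt.

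As for the attempt itself: you are candid that it is incomplete. The ``if'' direction and the identification of $H_1$ \emph{once $H_2$ is assumed to be a star} are plausible sketches --- your colourings (C1), (C2), (C3) are close relatives of constructions the paper deploys elsewhere (compare the proofs of Lemma~\ref{domain}, Theorem~\ref{dim_tree}, and Lemma~\ref{P_5_not}). But the step you explicitly ``grant'', that $H_2$ must be a star, is the heart of the ``only if'' direction, and you have not supplied it. Your final paragraph correctly identifies both the reformulation (matching number one) and the difficulty: for each connected non-star $H_2$ with $|H_2|\ge 4$ and each strict supergraph $H_1$, one must exhibit an edge-colouring of a complete graph using arbitrarily many colours that contains a rainbow $H_2$ but no rainbow $H_1$, and no single universal construction handles all cases. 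That case analysis is exactly the content of~\cite{CLMS}; without it, what you have written is an outline of a strategy rather than a proof.
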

\par
Theorem~\ref{clms} determines the pairs of graph $(H_1, H_2)$
with $H_1\le H_2$
when $H_1$ and $H_2$ are comparable with respect to $\subseteq$.
Motivated by this result,
in this paper,
we study the pairs of graphs $H_1$ and $H_2$
with $H_1\le H_2$
when neither $H_1$ nor $H_2$ is a subgraph of the other.
We will show that there are variety of such pairs and
they have noteworthy structure.
\par
In the next section,
we remark that not every connected graph in $\HH_0$
is eligible for the study,
and determine the precise domain of research.
In our study,
it is convenient to have invariants of graphs
which naturally reflect the comparability in $\le$.
We seek such invariants in Section~3.
In Section~4,
we give examples of a pair of graphs
$H_1$ and $H_2$ with $H_1\le H_2$,
neither of which is a subgraph of the other,
and in Section~5,
we study a partial order naturally induced
by the preorder $\le$.
In Section~6,
we make some concluding remarks and present future targets in this study.
\par
For terminology and notation
not explained in this paper,
we refer the reader to \cite{CLZ}.
As we mentioned earlier in this section,
the vertex set $V(G)$ of a graph $G$ is a finite subset of $\naturalnumbers$.
The degree of a vertex $x$ in $G$ is denoted by $\deg_G x$,
and the maximum degree of $G$ is denoted by $\Delta(G)$.
For edge-colored graphs $(G, c)$,
we often deal with color degrees.
The number of colors that appear in the edges incident with $x$
is denoted by $d_c(x)$ and called the color degree of $x$.
The maximum value among color degrees of the vertices in $G$
is called the maximum color degree and denoted by $\Delta_c(G,c)$.
For disjoint subsets $X$ and $Y$ of $V(G)$,
we denote by $E_G(X, Y)$ the set of edges
with one endvertex in $X$ and the other in $Y$.
\par
Given a set of vertices $X$,
let $K[X]$ denote the complete graph whose vertex set is $X$.
In this paper,
we always use
the symbol
\lq $K$'
to indicate that it it is a complete graph.
Therefore,
once the graph $K=K[X]$ is introduced,
we can interpret $K[Y]$ for $Y\subset X$
also as the subgraph induced by $Y$.
\par
In this paper,
we also consider graphs induced by a set of edges in this paper.
Let $G$ be a graph and let $F\subset E(G)$.
Then we define $G[F]$ to be the graph with
vertex set $V(F)$ and the edge set $F$,
where $V(F)$ is the set of endvertices of the edges in $F$.
If $F=\{f_1,\dots, f_k\}$,
we often write $G[f_1,\dots, f_k]$
instead of $G[\{f_1,\dots, f_k\}]$.
\par
In this paper,
a color of an edge is represented by a natural number,
and we often consider a set of colors in a certain range.
For this purpose,
we use the standard notation of a closed interval.
For $a, b\in\naturalnumbers$ with $a\le b$,
we denote by $[a, b]$ the set of natural numbers $n$ with
$a\le n\le b$.
\par
We denote by $P_k$ and $C_k$ the path and the cycle of order~$k$,
respectively.
For a cycle $C=x_1x_2\dots, x_kx_1$,
we denote by $x_i\Cforw x_j$ the subpath
$x_ix_{i+1}\dots, x_{j-1}x_j$.
The same path traversed in the opposite direction
is denoted by $x_j\Cback x_i$.
For a cycle or a path $S$,
we denote by $l(S)$ the number of edges in $S$
and call it the length of $S$.
\par
The \textit{barbell},
denoted by $B$,
is the graph obtained from $K_{1,3}^+$ by attaching a pendant edge
to the unique vertex of degree~$2$(Figure~\ref{barbell_figure}).
Equivalently,
the barbell is the unique tree having degree sequence $(3, 3, 1, 1, 1, 1)$.
\begin{figure}
\centering
\includegraphics[width=0.2\textwidth]{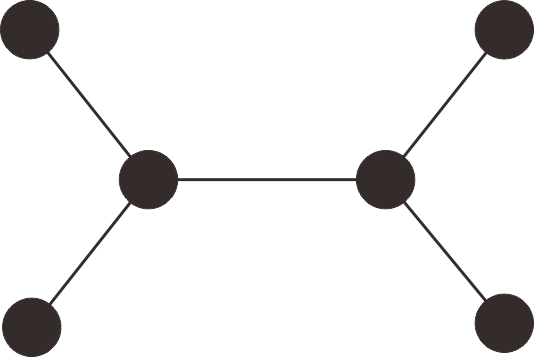}
\caption{barbell $B$}
\label{barbell_figure}
\end{figure}
\section{Domain and Target of Study}
As we mentioned in the introduction,
we take vertices from the set of natural numbers
and consider only finite simple graphs.
Also we deal with only connected graphs in this paper,
though we do not think that forbidding a disconnected graph
is a trivial subject.
In the study of the traditional forbidden subgraphs
in the uncolored graphs,
the class of $2K_2$-free graphs has been reported to have a number
of intriguing cycle-related properties.
See \cite{CESS2017},
\cite{GP2016} and \cite{OS2022},
for example.
We exclude disconnected graphs in this paper
only because
we make sufficiently many observations
for connected graphs.
\par
Let $\HH_1$ be the set of connected graph in $\HH_0$.
We will study the relation $\le$ in $\HH_1$.
However,
there is no meaning in forbidding $K_1$ and $K_2$.
Hence we would like to exclude them from our study.
However,
we actually have to exclude more.
\par
In~\cite{TW2007},
Thomason and Wagner made the following observation.
\begin{reftheorem}[\cite{TW2007}]
Every edge-colored complete graph colored in $4$ or more colors
contains a rainbow $P_4$.
\label{tw}
\end{reftheorem}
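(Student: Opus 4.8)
The plan is to prove the contrapositive: if an edge-colored complete graph $(G,c)$ contains no rainbow $P_4$, then $c$ uses at most $3$ colors. The starting point is that as soon as two colors appear, a rainbow $P_3$ exists (a short argument applied to any two differently-colored edges of the complete graph). I would fix such a rainbow $P_3$, say $xyz$ with $c(xy)=\alpha$ and $c(yz)=\beta$, $\alpha\ne\beta$. The key local constraint is that this path cannot be extended into a rainbow $P_4$: for every vertex $w\notin\{x,y,z\}$ we must have $c(wx)\in\{\alpha,\beta\}$ and $c(wz)\in\{\alpha,\beta\}$, since otherwise $wxyz$ or $xyzw$ would be a rainbow $P_4$. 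I would then split the analysis according to whether $\Delta_c(G,c)\ge 3$ or $\Delta_c(G,c)\le 2$.

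In the case $\Delta_c(G,c)\ge 3$, choose a vertex $v$ with three incident edges $va,vb,vc$ of distinct colors $1,2,3$. Applying the extension constraint to the three rainbow paths $avb$, $avc$ and $bvc$ pins down the triangle on $\{a,b,c\}$: one is forced to have $c(ab)=3$, $c(bc)=1$ and $c(ca)=2$, a rainbow triangle. I would then show that no further vertex can exist: a fifth vertex $w$ would be forced, again by the constraint at $v$, to satisfy $c(wa)=1$, $c(wb)=2$, $c(wc)=3$; but feeding $w$ into the constraint for the rainbow $P_3$ given by $vab$ (of colors $1,3$) demands $c(wb)\in\{1,3\}$, contradicting $c(wb)=2$. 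Hence $G$ has exactly the four vertices $v,a,b,c$ and only the colors $1,2,3$, as required.

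In the case $\Delta_c(G,c)\le 2$, every vertex $x$ has a palette $P(x)$ (the set of colors on its incident edges) of size at most $2$. Since $G$ is complete, any two palettes must intersect: if $P(u)\cap P(v)=\varnothing$ the edge $uv$ could receive no admissible color. Thus the palettes form an intersecting family of sets of size at most $2$, and I would invoke the elementary dichotomy that such a family is either a star (all palettes share a common color) or a triangle $\{1,2\},\{2,3\},\{1,3\}$. In the triangle case only three colors occur and we are done. In the star case, with common color $1$, any two distinct non-central colors $p,q$ that appear are realized on edges $u_1u_2$ and $w_1w_2$ whose endpoints all have palettes $\{1,p\}$ and $\{1,q\}$ respectively; these two edges are vertex-disjoint, and the joining edge $u_2w_1$ is forced to have color $1$, so $u_1u_2w_1w_2$ is a rainbow $P_4$, contrary to assumption. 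Hence at most one non-central color can appear, and $c$ uses at most two colors.

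The main obstacle is the bookkeeping in the high-color-degree case, where the extension constraint must be applied to several overlapping rainbow $P_3$'s simultaneously to force the rigid $K_4$ structure, together with the clean proof of the star/triangle dichotomy in the low-color-degree case. It is worth noting where the constant $4$ enters: both the triangle palette configuration and the rainbow-triangle-plus-apex $K_4$ attain exactly three colors with no rainbow $P_4$, so the threshold cannot be lowered, and it is precisely $4$ colors that exclude these extremal configurations and force the path.
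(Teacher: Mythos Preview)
The paper does not prove this statement: it is quoted as a result of Thomason and Wagner~\cite{TW2007} and used as a black box throughout (in Lemma~\ref{domain} implicitly, and explicitly in the proofs of Theorems~\ref{P6C5}, \ref{S221_and_cycles}, \ref{S311_and_cycles}, and~\ref{barbell_cycles}). So there is no in-paper proof to compare your argument against.

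That said, your proof is correct. The case split on $\Delta_c(G,c)$ is clean, and both branches go through. In the high-degree case, the three rainbow $P_3$'s through $v$ pin down the triangle $abc$ exactly as you claim, and the path $vab$ (colors $1$ and $3$) then rules out a fifth vertex via $c(wb)\in\{1,3\}$ versus $c(wb)=2$. In the low-degree case, the palettes form an intersecting family of $\le 2$-sets; your star/triangle dichotomy is the standard one, and in the star branch the disjointness of the $p$- and $q$-edges follows because a shared endpoint would have palette $\{p,q\}\not\ni 1$. The only cosmetic point is that the opening paragraph about fixing a rainbow $P_3$ named $xyz$ is never actually used once you split on $\Delta_c$; you may as well delete it.

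If you want to see how your argument compares with the original, the Thomason--Wagner paper gives a short structural classification of all rainbow-$P_{k+1}$-free colorings of $K_n$ for general $k$, from which the $k=3$ bound of three colors falls out; your proof is a direct, self-contained treatment of the $k=3$ case.
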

By Theorem~\ref{tw},
whatever graph $H$ in $\HH_1$ we take,
we cannot give a counterexample
to the statement
\lq\lq
Every rainbow $P_4$-free edge-colored complete graph
colored in $5$ or more colors is rainbow $H$-free'',
and hence $P_4\le H$ vacuously holds.
However,
it does not give any insight. 
Therefore,
we may want to exclude $P_4$ and its subgraphs.
Considering the above,
we define $\HH$ by $\HH=\HH_1-\{P_1, P_2, P_3, P_4\}$,
and study the binary relation $\le$ in $\HH$.
Before proceeding further,
we remark that we do not have to weed out more.
\begin{lemma}
For every $H\in\HH$ and every positive integer $t$,
there exists a rainbow $H$-free edge-colored complete graph
colored in $t$ or more colors.
\label{domain}
\end{lemma}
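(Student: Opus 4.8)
The plan is to exhibit, for each $H\in\HH$ and each $t$, a single explicit coloring in which \emph{every} rainbow connected subgraph is forced to be a subgraph of $P_4$. The guiding observation is that $\HH$ is designed precisely to exclude all connected subgraphs of $P_4$: the connected subgraphs of $P_4$ are exactly $P_1, P_2, P_3, P_4$, so if $H$ is connected and $H\notin\{P_1,P_2,P_3,P_4\}$, then $H$ is not isomorphic to any subgraph of $P_4$. Since $H$ is connected, a rainbow subgraph isomorphic to $H$ would itself be connected; hence any coloring whose rainbow connected subgraphs are all subgraphs of $P_4$ is automatically rainbow $H$-free for every $H\in\HH$ at once. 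The case $t=1$ is trivial (a monochromatic $K_n$ has no rainbow subgraph with two edges), so I would assume $t\ge 2$.

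First I would fix the coloring. Take $K[\{1,\dots,2t\}]$, group the vertices into the $t$ pairs $B_i=\{2i-1,2i\}$, assign color $i$ to the unique edge inside $B_i$, and assign one further color $t+1$ to every edge joining two distinct pairs. All $t+1$ colors are used, so this coloring uses $t+1\ge t$ colors and the color-count requirement is met; it then remains only to verify rainbow $H$-freeness.

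Next I would prove the structural claim that every rainbow connected subgraph $F$ of this coloring is a subgraph of $P_4$. Because all edges joining distinct pairs share the color $t+1$, the rainbow graph $F$ contains at most one such edge; its remaining edges carry pairwise distinct colors from $\{1,\dots,t\}$ and therefore form a submatching of the perfect matching consisting of the $t$ intra-pair edges. If $F$ has no between-pair edge it is a single intra-pair edge. Otherwise $F$ is one between-pair edge $e_0$ together with a submatching, and here connectivity forces every matching edge of $F$ to meet $e_0$; as the only pairs meeting $e_0$ are the two pairs it joins, $F$ is contained in the three edges spanning those two pairs, i.e.\ in a path on at most four vertices.

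Combining the two parts, $F\subseteq P_4$ cannot be isomorphic to $H$, so the coloring is rainbow $H$-free, which completes the proof. The only delicate point is the connectivity step, which collapses a potentially large submatching onto the two pairs incident with $e_0$; everything else is bookkeeping. I expect this construction to be essentially the witness one needs, and its uniformity over all $H\in\HH$ is exactly what makes the assertion ``no further graphs need to be excluded from $\HH$'' clean.
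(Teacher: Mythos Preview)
Your proof is correct and takes essentially the same approach as the paper: both constructions are a rainbow perfect matching together with one extra ``default'' color on all remaining edges, and both conclude by observing that every connected rainbow subgraph of such a coloring is a subgraph of $P_4$. The only cosmetic difference is that the paper uses $t-1$ matching edges and default color $t$ (hitting exactly $t$ colors), whereas you use $t$ matching edges and default color $t+1$; since the lemma asks for $t$ or more colors, both work.
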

\begin{proof}
Take a set of $2(t-1)$ vertices $X=\{x_1,\dots, x_{t-1}, y_1,\dots, y_{t-1}\}$
from $\naturalnumbers$ and let $K=K[X]$.
Define $c\colon E(K)\to\naturalnumbers$ by
\[
c(e) = 
\begin{cases}
i & \text{if $e=x_iy_i$, $1\le i\le t-1$}\\
t & \text{otherwise.}    
\end{cases}
\]
Then $(K, c)$ is a complete graph edge-colored in $t$ colors.
Moreover,
every connected rainbow subgraph in $(K, c)$ is a subgraph
of $P_4$,
and hence it is rainbow $H$-free.
\end{proof}
Next,
we discuss the binary relation $\le$.
As we have seen in the introduction,
it is reflective and transitive,
ans hence it is a preorder.
On the other hand,
we know
$K_{1,k}^+\le K_{1,k}$
by Theorem~\ref{clms},
and since $K_{1,k}\subseteq K_{1,k}^+$,
we also have $K_{1,k}\le K^+_{1,k}$.
Therefore,
$\le$ is not anti-symmetric and it is
not a partial order.
\par
In this situation,
there is a well-known approach to convert a preorder $\le$
to a partial order whose behavior is similar to $\le$.
For $H_1, H_2\in\HH$,
we write
$H_1\equiv H_2$ if both $H_1\le H_2$ and $H_2\le H_1$ hold.
It is easy to see that $\equiv$ is an equivalence relation.
For equivalence classes $\HH'$ and $\HH''$ with respect to $\equiv$,
we write $\HH'\le \HH''$ if
there exist $H_1\in\HH'$ and $H_2\in\HH''$
with $H_1\le H_2$.
Again,
it is easy to see that $\le$ is a partial order on the quotient set $\qHH$.
Note that in order to avoid unnecessary complication,
we slightly abuse the notation and use the same symbol $\le$
for both the preorder in $\HH$ and the partial order in $\qHH$.
\par
In Section~5,
we determine the minimum element of
$(\qHH, \le\,)$ and the elements
immediately after it.
\section{Invariants of Graphs Reflecting $\boldsymbol{\le}$}
When we seek candidates of pairs $H_1$ and $H_2$ with
$H_1\le H_2$,
it is convenient to have an invariant
which reflects the relation $H_1\le H_2$.
Ideally,
we want to have an invariant $f$ of graphs
such that $H_1\le H_2$ implies $f(H_1)\le f(H_2)$.
Unfortunately,
the example of $K_{1, k}^+\le K_{1,k}$ suggests that
neither the order nor the size (the number of edges)
satisfies this property.
However,
if we find a constant $c$
such that $H_1\le H_2$ implies
$|V(H_1)|\le |V(H_2)|+c$
or $|E(H_1)|\le |E(H_2)|+c$,
then for a given graph $H_2$,
the number of graphs $H_1\in \HH$ with $H_1\le H_2$ is finite,
which will facilitate our research.
\par
Based on the above motivation,
in this section,
we investigate several basic graph invariants
and whether and how they reflect the relation $\le$.
\par
For a connected graph $G$,
define $\dim G$,
the dimension of $G$,
by $\dim(G)=|E(G)|-|V(G)|+1$.
It is well-known that $\dim G\ge 0$ and
$\dim G = 0$ if and only if $G$ is a tree.
Moreover,
it is easy to see that $H_1\subseteq H_2$ implies
$\dim H_1 \le \dim H_2$.
\par
In Theorems~\ref{invariants}--\ref{dim_monotone},
we prove that the order,
size and dimension serve our purpose.
We also give an inequality on the maximum degree.
\begin{theorem}
Let $H_1, H_2\in\HH$.
If $H_1\le H_2$,
then
\begin{enumerate}
\item
$|E(H_1)|\le |E(H_2)|+1$,
\item
$|V(H_1)|\le |V(H_2)|+2$,
and
\item
$\Delta(H_1)\le \Delta(H_2)+1$.
Moreover,
if $\Delta(H_1)=\Delta(H_2)+1$,
then $H_1$ contains at most two vertices of degree $\Delta(H_1)$,
and if $H_1$ contains two vertices of $\Delta(H_2)+1$,
they are adjacent.
\item
If $H_2$ is not a tree,
then $\dim(H_1)\le\dim(H_2)$.
\end{enumerate}
\label{invariants}
\end{theorem}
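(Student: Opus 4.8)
The plan is to prove all four inequalities by one device. For each statement I will build, for every large integer $N$, an edge-colored complete graph $(K,c)$ that uses at least $N$ colors and contains a rainbow copy of $H_2$, but all of whose \emph{connected} rainbow subgraphs obey a structural bound. Fix the threshold $t_0=t(H_1,H_2)$ guaranteed by $H_1\le H_2$ and take $N\ge t_0$. Since $(K,c)$ contains a rainbow $H_2$ it is not rainbow $H_2$-free; as it has at least $t_0$ colors, the definition of $H_1\le H_2$ forbids it from being rainbow $H_1$-free, so $H_1$ embeds as a connected rainbow subgraph of $(K,c)$. Hence $H_1$ inherits whatever bound I can impose on connected rainbow subgraphs, and the four inequalities read off at once. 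The common skeleton is a core set $U$ carrying a rainbow $H_2$ in colors $1,\dots,m$ (with $m=|E(H_2)|$), together with $N$ disjoint ``pendant pairs'' $\{a_i,b_i\}$ whose edges $a_ib_i$ get fresh distinct colors $m+1,\dots,m+N$; only the coloring of the remaining \emph{filler} edges (non-$H_2$ edges inside $U$, $U$-to-pair edges, pair-to-pair edges) varies with the target.

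The governing principle is that a rainbow subgraph uses at most one edge of each color, so if the filler uses a \emph{single} color then every connected rainbow subgraph is (part of) the rainbow $H_2$ on $U$, at most one filler edge, and pendant edges reachable only through that one filler edge. Coloring all filler edges with one \emph{fresh} color $0$ gives the vertex bound: the largest connected rainbow subgraph is the whole $H_2$ (on $|V(H_2)|$ vertices) joined by its unique color-$0$ edge to a single pendant edge (two further vertices), so $|V(H_1)|\le|V(H_2)|+2$. Coloring all filler edges instead with the color of one fixed edge of $H_2$ (a \emph{reused} color) gives the size bound: the full rainbow $H_2$ already spends that color and so admits no filler extension at all, while dropping one $H_2$-edge to regain the color lets one attach at most one pendant edge, giving $|E(H_1)|\le|E(H_2)|+1$.

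For the maximum-degree statement I would keep the single fresh filler color $0$ and argue through color degrees. A rainbow subgraph has maximum degree at most $\Delta_c(K,c)$, and here $\Delta_c(K,c)=\Delta(H_2)+1$, attained only at the maximum-degree vertices of the core $H_2$; this yields $\Delta(H_1)\le\Delta(H_2)+1$. For the refinement, note that a vertex of degree $\Delta(H_2)+1$ in a connected rainbow subgraph must use \emph{all} of its incident colors, in particular its color-$0$ edge. Since a rainbow subgraph contains at most one color-$0$ edge, at most its two endpoints can have degree $\Delta(H_2)+1$, and if there are two such vertices they are joined by that very edge, hence adjacent.

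The delicate case, and the one I expect to be the main obstacle, is the dimension bound, because the naive fillers fail: a single filler color on a chord inside $U$, taken together with the whole rainbow $H_2$, creates one extra independent rainbow cycle and pushes the cyclomatic number to $\dim(H_2)+1$. This is exactly where the hypothesis that $H_2$ is not a tree enters. The plan is to fix a \emph{non-bridge} edge $e_1$ of $H_2$ (which exists precisely because $H_2$ contains a cycle), color every non-$H_2$ edge inside $U$ with the color of $e_1$, and color every edge incident to a pendant pair (other than the edges $a_ib_i$ themselves) with one additional fresh color. A connected rainbow subgraph then uses at most one edge of $e_1$'s color and at most one fresh-colored edge; if it uses a within-core chord it must omit $e_1$, and since $e_1$ lies on a cycle, deleting $e_1$ lowers the dimension by one before the chord restores it, so the within-$U$ part has dimension at most $\dim(H_2)$, while the single fresh-colored edge attaches a pendant pair only as a pendant and creates no cycle. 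Verifying that no admissible combination of the core, one reused edge, and the pendant edges exceeds $\dim(H_2)$ independent cycles is the one computation I would carry out in full; the rest is bookkeeping with the ``one edge per color'' principle.
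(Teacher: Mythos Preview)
Your proposal is correct and rests on the same construction as the paper: a core carrying a rainbow copy of $H_2$, disjoint pendant matching edges in fresh colors, and a single filler color on everything else, so that any connected rainbow subgraph is the core (possibly minus one edge) with at most a pendant $P_2$ hanging off. The only substantive difference is packaging. The paper uses \emph{one} coloring throughout---the filler always reuses the color of a fixed $H_2$-edge $e_1$, chosen to be a non-bridge when $H_2$ is not a tree---and reads off (1)--(4) from a single case analysis on where the unique non-$H_2$ edge of $G'$ lies; you instead tailor three different filler colorings to the four statements. In particular, for (4) your second fresh color is unnecessary: with all filler reusing the color of a non-bridge $e_1$, a rainbow $G'$ that leaves the core must drop $e_1$, and the pendant attachment then even gives $\dim(G')\le\dim(H_2)-1$. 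On the other hand, your color-degree argument for (3) is cleaner than the paper's explicit supergraph bookkeeping: observing $\Delta_c(K,c)=\Delta(H_2)+1$ and that any vertex attaining this in a rainbow subgraph must use its filler-colored edge gives the refinement in one line.
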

\begin{proof}
Since $H_1\le H_2$,
there exists a positive integer $t$
such that every rainbow $H_1$-free edge-colored complete graph colored in $t$ or more colors
is rainbow $H_2$-free.
We may assume $t\ge |E(H_2)|+1$.
Let $|V(H_2)|=n$,
$E(H_2)=\{f_1,\dots, f_m\}$,
$|V(H_1)|=n'$ and
$|E(H_1)|=m'$.
If $H_2$ is not a tree,
we choose $f_1$ so that $H_2-f_1$ is connected.
\par
Introduce $n+2(t-m)$ vertices
$x_1,\dots, x_n, y_{m+1},\dots, y_t, z_{m+1},\dots, z_t$.
Let $X=\{x_1,\dots, x_n\}$,
$Y=\{y_{m+1},\dots, y_t, z_{m+1},\dots, z_t\}$ and $K=K[X\cup Y]$.
Take a graph $G$ with $V(G)=X$ which is isomorphic to $H_2$.
Let $\varphi$ be an isomorphism from $H_2$ to $G$ and
let $e_i=\varphi(f_i)$
($1\le i\le m$).
By the choice of $f_1$,
if $H_2$ is not a tree,
then $G-e_1$ is connected.
\par
Define $c\colon E(K)\to\{1,\dots, t\}$ by
\[
c(e)=
\begin{cases}
i & \text{if $e=e_i$, $1\le i\le m$}\\
j & \text{if $e=y_jz_j$, $m+1\le j\le t$}\\
1 & \text{otherwise.}    
\end{cases}
\]
Then $(K, c)$ is colored in $t$ colors
and $G$ is a rainbow subgraph of $K$ isomorphic to $H_2$.
This implies that $(K, c)$ contains a rainbow subgraph $G'$
which is isomorphic to $H_1$.
Let $F=\bigl(E(K[X])-E(G)\bigr)\cup E_K(X, Y)$.
Note that all the edges in $F$ are colored in $1$.
\par
If $H_1\subseteq H_2$,
then trivially
$|E(H_1)|\le |E(H_2)|$,
$|V(H_1)|\le |V(H_2)|$,
$\Delta(H_1)\le \Delta(H_2)$
and $\dim(H_1)\le \dim(H_2)$.
Thus,
all the statements of the theorem hold.
Therefore,
we may assume $H_1\not\subseteq H_2$.
On the other hand,
a maximal connected rainbow subgraph of $K-X$ is isomorphic to $P_4$.
Since $H_1\in\HH$,
we have
$V(G')\cap X\ne\emptyset$.
Then since $H_1\not\subseteq H_2$ and $G'$ is connected,
we have $E(G')\cap F\ne\emptyset$.
Let $e_0\in E(G')\cap F$.
Since $c(e_0)=1$ and $G'$ is rainbow,
we have $E(G')\cap F=\{e_0\}$,
and $e_1\notin E(G')$.
\par
We consider two cases depending on whether $H_2$ is a tree or not.
\medbreak\noindent
\textbf{Case~1.  $H_2$ is not a tree.}
\medbreak
In this case,
$G-e_1$ is connected.
First,
suppose $e_0\in E(K[X])-E(G)$.
Let $G_1=G-e_1+e_0$.
Then $G_1$ is connected.
In this case,
$E(G')\cap E_K(X, Y)=\emptyset$
and hence $G'$ is a subgraph of $G_1$.
This yields
$|V(G')|\le |V(G_1)|$,
$|E(G')|\le |E(G_1)|$,
$\Delta(G')\le \Delta(G_1)$ and
$\dim G'\le\dim G_1$.
Since $|E(G_1)|=|E(G)|$,
$|V(G_1)|=|V(G)|$
and $\Delta(G_1)\le \Delta(G)+1$,
we have
\begin{align*}
|E(H_1)| &= |E(G')|\le |E(G_1)|=|E(G)|=|E(H_2)|,\\
|V(H_1)| &= |V(G')|\le |V(G_1)|=|V(G)|=|V(H_2)|
\text{ and}\\
\Delta(H_1) &= \Delta(G')\le\Delta(G_1)\le\Delta(G)+1=\Delta(H_2)+1.
\end{align*}
Moreover,
$\deg_{G_1}v=\deg_G v$ for every vertex $v$ in $X=V(G)$
possibly except for the endvertices of $e_0$.
Therefore,
if $\Delta(H_1)=\Delta(H_2)+1$,
then only the vertices in $H_1$ corresponding to
the endvertices of $e_0$ in $\varphi$ attain $\Delta(H_1)$,
and if two vertices in $H_1$ have degree $\Delta(H_2)+1$,
then they are adjacent.
We also have
\[
\begin{split}
\dim H_1 &= \dim G'\le \dim G_1=|E(G_1)|-|V(G_1)|+1\\
&= |E(G)|-|V(G)|+1=\dim G=\dim H_2.
\end{split}    
\]
\par
Next,
suppose $e_0\in E_K(X, Y)$.
In this case,
$V(e_0)\cap Y\ne\emptyset$.
Without loss of generality,
we may assume $e_0=uy_{m+1}$,
where $u\in X$.
Let $G_2=G-e_1+e_0+y_{m+1}z_{m+1}$.
Then $G'$ is a subgraph of $G_2$
and hence $|V(G')|\le |V(G_2)|$,
$|E(G')|\le |E(G_2)|$,
$\Delta(G')\le \Delta(G_2)$ and
$\dim G'\le \dim G_2$.
Note $|E(G_2)|=|E(G)|+1$ and
$|V(G_2)|=|V(G)|+2$.
Moreover,
since $G$ is isomorphic to $H_2$ and $H_2\in\HH$,
we have $\Delta(G)\ge 2$.
Therefore,
$\Delta(G_2)\le\max\{\Delta(G)+1, \deg_{G_2} y_{m+1}\}
=\max\{\Delta(G)+1, 2\}=\Delta(G)+1$.
Then we have
\begin{align*}
|E(H_1)| &= |E(G')|\le |E(G_2)|=|E(G)|+1=|E(H_2)|+1,\\
|V(H_1)| &= |V(G')|\le |V(G_2)| = |V(G)|+2=|V(H_2)|+2,
\text{ and}\\
\Delta(H_1) &= \Delta(G')\le \Delta(G_2)\le \Delta(G)+1=\Delta(H_2)+1.
\end{align*}
Moreover,
if $\Delta(H_1)=\Delta(H_2)+1$,
then since $\Delta(H_1)=\Delta(G)+1\ge 3$,
the same argument as in the previous paragraph
deduces that only $u$ has degree $\Delta(H_1)$.
We also have
\[
\begin{split}
\dim H_1 &= \dim G' \le \dim G_2
=|E(G_2)|-|V(G_2)|+1\\
&=(|E(G)|+1)-(|V(G)|+2)+1=\dim G-1=\dim H_2-1.   
\end{split} 
\]
Therefore,
the theorem follows in this case.
\medbreak\noindent
\textbf{Case~2.  $H_2$ is a tree.}
\medbreak
In this case $G$ is a tree,
and $G-e_1$ has two components $T_1$ and $T_2$,
both of which are trees.
We consider several subcases depending on where $e_0$ lies.
\smallbreak\noindent
\textbf{Subcase~2.1.  $e_0\in E_K(V(T_1), V(T_2))$.}
\smallbreak
Let $G_3=G-e_1+e_0$.
In this case,
$G_3$ is a tree with $V(G_3)=X=V(G)$,
and $G'$ is a subgraph of $G_3$.
Moreover,
$\Delta(G_3)\le\Delta(G)+1$.
Since $|E(G_3)|=|E(G)|$
and $|V(G_3)|=|V(G)|$,
we have
\begin{align*}
|E(H_1)| &= |E(G')|\le |E(G_3)|=|E(G)|=|E(H_2)|,\\
|V(H_1)| &= |V(G')|\le |V(G_3)|=|V(G)|=|V(H_2)|,
\text{ and}\\
\Delta(H_1) &= \Delta(G')\le \Delta(G_3)\le\Delta(G)+1=\Delta(H_2)+1.
\end{align*}
\smallbreak\noindent
\textbf{Subcase~2.2.
$e_0\in \bigl(E(K[V(T_1)])-E(T_1)\bigr)\cup \bigl(E(K[V(T_2)]))-E(T_2)\bigr)$.}
\smallbreak
By symmetry,
we may assume $e_0\in E(K[V(T_1)])-E(T_1)$.
Let $G_4=T_1+e_0$.
Then $G'$ is a subgraph of $G_4$.
Note $|E(G_4)|=|E(G)|-|E(T_2)|$,
$|V(G_4)|=|V(G)|-|V(T_2)|$
and $\Delta(G_4)\le\Delta(G)+1$.
Note that possibly $E(T_2)=\emptyset$,
but $|V(T_2)|\ge 1$.
Therefore,
\begin{align*}
|E(H_1)| &= |E(G')|\le |E(G_4)|=|E(G)|-|E(T_2)|\le |E(G)|=|E(H_2)|,\\
|V(H_1)| &= |V(G')|\le |V(G_4)|=|V(G)|-|V(T_2)|\le |V(G)|-1=|V(H_2)|-1,
\text{ and}\\
\Delta(H_1) &= \Delta(G')\le\Delta(G_4)\le\Delta(G)+1=\Delta(H_2)+1.
\end{align*}
\smallbreak\noindent
\textbf{Subcase~2.3.
$e_0\in E_K(X, Y)$}
\smallbreak
Without loss of generality,
we may assume $e_0=uy_{m+1}$,
where $u\in V(T_1)$.
Let $G_5=T_1+e_0+y_{m+1}z_{m+1}$.
Then $G'\subseteq G_5$.
Since $|E(G_5)|=|E(T_1)|+2=|E(G)|-|E(T_2)|+1\le |E(G)|+1$,
$|V(G_5)|=|V(T_1)|+2=|V(G)|-|V(T_2)|+2\le |V(G)|+1$
and $\Delta(G_5)\le\Delta(G)+1$,
we have
\begin{align*}
|E(H_1)| &= |E(G')|\le |E(G_5)|\le |E(G)|+1=|E(H_2)|+1,\\
|V(H_1)| &= |V(G')|\le |V(G_5)|\le |V(G)|+1=|V(H_2)|+1,
\text{ and}\\
\Delta(H_1) &= \Delta(G')\le\Delta(G_5)\le\Delta(G)+1=\Delta(H)+1.
\end{align*}
Note that in all the subcases,
if $\Delta(H_1)=\Delta(H_2)+1$,
then only the vertices in $H_1$ corresponding to
the endvertices of $e_0$ in $\varphi$ attain $\Delta(H_1)$,
and if two vertices in $H_1$ have degree $\Delta(H_2)+1$,
then they are adjacent.
\end{proof}
\par
The inequalities in (1),
(2) and (3) of Theorem~\ref{invariants}
are all best possible.
See the remark after Theorem~\ref{S221_and_cycles}
\par
Note that in Subcase~2.2,
we have $|E(G_4)|=|E(G)|-|E(T_2)|$
and $|V(G_4)|=|V(G)|-|V(T_2)|$.
These yield
\[
\dim G_4=|E(G_4)|-|V(G_4)|+1
=|E(G)|-|V(G)|+1-(|E(T_2)|-|V(T_2)|)=\dim G +1.    
\]
This equality gives $\dim H_1\le\dim H_2+1$,
but not $\dim H_1\le \dim H_2$.
However,
by a different approach,
we prove $\dim H_1\le \dim H_2$ even if $H_2$ is a tree.
\begin{theorem}\label{dim_tree}
For $H_1\in\HH$ and a tree $T$ in $\HH$,
if $H_1\le T$ holds,
then $H_1$ is a tree.
\end{theorem}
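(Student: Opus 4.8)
The plan is to prove the contrapositive by a direct construction. Assume $H_1\le T$ and, for contradiction, that $H_1$ is not a tree. Since $H_1\in\HH$ is connected but not a tree, $\dim H_1\ge 1$, so $H_1$ contains a cycle. To contradict $H_1\le T$ I would produce, for every positive integer $t$, a rainbow $H_1$-free edge-colored complete graph colored in at least $t$ colors that is \emph{not} rainbow $T$-free; this directly violates the threshold guaranteed by $H_1\le T$.

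The coloring I would use is the \emph{min-coloring}. For a large integer $n$, set $V=\{1,\dots,n\}\subset\naturalnumbers$, let $K=K[V]$, and define $c(ij)=\min\{i,j\}$ for each edge $ij$ of $K$. Then $(K,c)$ is colored in exactly $n-1$ colors, so choosing $n-1\ge t$ settles the color requirement. The first thing to observe is that $(K,c)$ has \emph{no rainbow cycle}: on any cycle, the endvertex carrying the smallest label is incident with two cycle-edges, both of which receive that smallest label as their color, so the cycle fails to be rainbow. Hence a rainbow subgraph isomorphic to $H_1$ would contain a rainbow cycle, namely the image of a cycle of $H_1$; since no such rainbow cycle exists, $(K,c)$ is rainbow $H_1$-free.

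The heart of the argument is to show that $(K,c)$ nonetheless contains a rainbow copy of $T$ as soon as $n\ge|V(T)|$. I would root $T$ at an arbitrary vertex and orient each edge from its child-endpoint toward its parent. It then suffices to find an injective map $\varphi\colon V(T)\to V$ in which every non-root vertex receives a strictly smaller label than its parent; such a labeling exists because it is merely a linear extension of the rooted-tree order and $n\ge|V(T)|$. Under $\varphi$ the color of each edge equals the label of its child-endvertex, and since every non-root vertex is the child in exactly one edge while the root is the child in none, these colors are pairwise distinct. Thus $\varphi$ embeds $T$ as a rainbow subgraph. This embedding step, together with verifying that the resulting copy is genuinely rainbow, is the main point requiring care.

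Putting these together, for $n$ with $n-1\ge t$ and $n\ge|V(T)|$ the graph $(K,c)$ is rainbow $H_1$-free, is colored in at least $t$ colors, yet contains a rainbow $T$. As $t$ was arbitrary, no threshold $t=t(H_1,T)$ can witness $H_1\le T$, a contradiction. Therefore $H_1$ must be a tree.
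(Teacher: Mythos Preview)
Your proof is correct and follows essentially the same approach as the paper: the paper uses the coloring $c(x_ix_j)=\max\{i,j\}$ instead of your $\min\{i,j\}$, but this is a cosmetic relabeling, and the two key observations---the extremal vertex on any cycle forces a repeated color, while a tree can be embedded rainbow via a parent/child-monotone labeling---are identical.
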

\begin{proof}
Since
$H_1\le T$,
there exists a positive integer $t$
such that every rainbow $H_1$-free edge-colored complete graph
colored in $t$ or more colors is rainbow $T$-free.
We may assume $t\ge |V(T)|$.
\par
Choose one vertex $r$ of $T$ and consider $T$ as a rooted tree
with root $r$.
Then label the vertices of $T$
in the manner of breadth-first search
starting from $r$.
This enables us to assume that
the vertices of $T$ are labeled
as $V(T)=\{x_1,\dots, x_n\}$
so that if $x_i$ is the parent of $x_j$,
then $i < j$ holds.
Note $r=x_1$.
\par
Introduce new vertices $x_0$ and
$x_{n+1},\dots, x_t$,
and let $K=K[\{x_0, x_1,\dots x_t\}]$.
Define $c\colon E(K)\to\{1,\dots, t\}$ by
$c(x_ix_j)=\max\{i, j\}$.
Since $c(x_0x_i)=i$ for $1\le i\le t$,
$K$ is edge-colored in $t$ colors.
\par
Suppose $c(x_{i_1}x_{j_1})=c(x_{i_2}x_{j_2})$
for $\{x_{i_1}x_{j_1}, x_{i_2}x_{i_2}\}\subset E(T)$.
We may assume that $x_{i_1}$ and $x_{i_2}$ are the parents
$x_{j_1}$ and $x_{j_2}$,
respectively.
Then our labeling of $V(T)$ forces
$i_1 < j_1$ and $i_2 < j_2$,
which implies $c(x_{i_1}x_{j_1})=j_1$
and $c(x_{i_2}x_{j_2})=j_2$.
Therefore,
we have $j_1=j_2$.
This implies $i_1=i_2$ and hence
$x_{i_1}x_{j_1}=x_{i_2}x_{j_2}$.
Therefore,
the mapping $c$ is injective
and hence $(T, c)$ is rainbow.
\par
Since $c$ uses $t$ colors and $(K, c)$ contains a rainbow $T$,
$(K, c)$ contains a rainbow subgraph $G'$ which is
isomorphic to $H_1$.
\par 
Assume $G'$ contains a cycle.
Let $C=x_{i_0}x_{i_1}\dots x_{i_p}x_{i_0}$ be a cycle in $G'$.
We may assume $i_0=\max\{i_0, i_1,\dots, i_p\}$.
Then $i_0 > i_1$ and $i_0 > i_p$
and hence $c(x_{i_0}x_{i_1})=c(x_{i_0}x_{i_p})=i_0$.
This contradicts the fact that $(G', c)$ is rainbow.
Therefore,
$G'$ does not contain a cycle.
Since $G'$ is connected,
we see that $G'$ is a tree,
which implies that $H_1$ is also a tree.
\end{proof}
\par
For a later reference,
we combine Theorem~\ref{invariants}~(4)
and~\ref{dim_tree}
into one statement.
\begin{theorem}
For $H_1, H_2\in\HH$,
$H_1\le H_2$ implies $\dim H_1\le\dim H_2$.
\label{dim_monotone}
\end{theorem}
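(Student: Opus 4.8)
The plan is to split into the two cases according to whether or not $H_2$ is a tree, since these are precisely the hypotheses of the two preceding results, and to observe that together they cover every possibility. Suppose first that $H_2$ is not a tree. Then the desired inequality $\dim H_1\le\dim H_2$ is exactly the conclusion of Theorem~\ref{invariants}~(4), so in this case there is nothing further to prove. Suppose instead that $H_2$ is a tree. By the basic fact recorded earlier in this section, $\dim G=0$ if and only if $G$ is a tree, so $\dim H_2=0$. Applying Theorem~\ref{dim_tree} with $T=H_2$, the hypothesis $H_1\le H_2$ forces $H_1$ to be a tree as well, and hence $\dim H_1=0=\dim H_2$. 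In both cases we obtain $\dim H_1\le\dim H_2$, which is the assertion.

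There is essentially no obstacle here, as the statement is a direct amalgamation of Theorems~\ref{invariants}~(4) and~\ref{dim_tree}; the only point deserving a moment's care is to confirm that the two results genuinely dovetail with no gap. They do: Theorem~\ref{invariants}~(4) supplies the inequality precisely when $H_2$ is not a tree (that is, when $\dim H_2\ge 1$), while the excluded boundary case $\dim H_2=0$ is exactly the tree case, which Theorem~\ref{dim_tree} settles by an independent argument. Since $H_1\in\HH$ is connected, $\dim H_1\ge 0$ is always well defined, so the case split is exhaustive and the conclusion holds for every pair $H_1,H_2\in\HH$ with $H_1\le H_2$.
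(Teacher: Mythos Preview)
Your proof is correct and matches the paper's approach exactly: the paper states this theorem merely as the combination of Theorem~\ref{invariants}~(4) and Theorem~\ref{dim_tree}, and your explicit case split on whether $H_2$ is a tree is precisely the intended dovetailing of those two results.
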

\section{Pairs Comparable in $\boldsymbol{\le}$ but Not in $\boldsymbol{\subseteq}$}
In this section,
we seek pairs of graphs $(H_1, H_2)$ such that $H_1\le H_2$ but
neither $H_1$ nor $H_2$ is a subgraph of the other.
In particular,
we search for examples among trees and cycles.
\subsection{Cycle vs Cycle}
We first search for pairs among cycles.
We remark that no two cycles of different order are comparable with respect to $\subseteq$.
\par
Let $Z_1$ be the graph obtained from $C_3$ by attaching one pendant edge
(Figure~\ref{Z1figure}).
\begin{figure}
\centering
\includegraphics[width=0.15\textwidth]{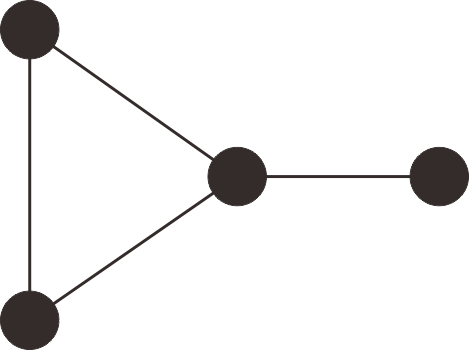}
\caption{$Z_1$}
\label{Z1figure}
\end{figure}
Note that since $C_3\subsetneq Z_1$,
we have $Z_1\not\le C_3$ by Theorem~\ref{clms}.
\begin{theorem}\label{Z1}
For every integer $k$ with $k\ge 4$,
$Z_1\le C_k$.
\end{theorem}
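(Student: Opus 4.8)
<br>

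The goal is to show $Z_1 \le C_k$ for every $k \ge 4$. Unpacking the definition, I need to find a threshold $t$ such that every rainbow $Z_1$-free edge-colored complete graph $(K,c)$ colored in at least $t$ colors is rainbow $C_k$-free. So I would argue by contraposition: assuming $(K,c)$ uses many colors and DOES contain a rainbow $C_k$, I want to produce a rainbow $Z_1$. Recall $Z_1$ is a triangle with a pendant edge—equivalently three vertices forming a rainbow triangle, plus a fourth edge of a fourth color hanging off one of those vertices.

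Let me think about what structure is available. The graph $Z_1$ has $\dim Z_1 = 1$ (one independent cycle), and $\dim C_k = 1$, so Theorem~\ref{dim_monotone} at least does not obstruct this; both have dimension $1$. This is consistent, and reassuringly the dimension invariant is respected.

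My plan is to start from a rainbow $C_k = v_1 v_2 \cdots v_k v_1$ with distinct colors on its $k$ edges. Inside the complete graph every chord $v_i v_j$ is present. The strategy is to examine the chords of this cycle together with the cycle edges and hunt for a rainbow triangle, then extend it by a pendant edge to a new color. A natural first move is to look at a chord, say $v_1 v_3$: together with edges $v_1 v_2$ and $v_2 v_3$ it forms a triangle $v_1 v_2 v_3$. Since $c(v_1 v_2)$ and $c(v_2 v_3)$ already differ (they are distinct cycle edges), this triangle is rainbow unless $c(v_1 v_3)$ equals one of these two. So each short chord either yields a rainbow triangle or is forced to repeat an adjacent cycle color. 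The idea is to show that if too many chords are forced to repeat colors, then one can either build a rainbow triangle elsewhere or, by harvesting the many available colors in the rest of $K$, attach a pendant edge of a fresh color to complete a rainbow $Z_1$. Once I have a rainbow triangle on vertices $\{a,b,c\}$, completing it to $Z_1$ just requires an edge from one of $a,b,c$ to an outside vertex carrying a color distinct from the three triangle colors; since the graph uses at least $t$ colors and the triangle uses only three, for $t$ large such an edge should be findable unless the coloring is highly degenerate around $\{a,b,c\}$.

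The main obstacle I anticipate is handling the degenerate colorings where rainbow triangles are scarce or absent: a Gallai coloring (rainbow $C_3$-free) has no rainbow triangle at all, yet by Gallai's structure theorem such colorings use relatively few colors on large complete graphs, so choosing $t$ beyond that bound should rule this case out; I would lean on the fact that a rainbow $C_k$ with $k \ge 4$ forces some interaction between chords and cycle edges that a Gallai coloring cannot sustain once many colors appear. The delicate bookkeeping will be the pendant-edge step: after locating a rainbow triangle using only three colors, I must guarantee a fourth color is reachable via an edge incident to the triangle, which is where the \emph{sufficiently many colors} hypothesis does the real work, and where I expect to calibrate the exact value of $t$. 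I would handle this by a counting argument: if no incident edge offered a new color, all edges from $\{a,b,c\}$ to $V(K) \setminus \{a,b,c\}$ would use only the three triangle colors, severely limiting the total palette and contradicting the color count for large $t$.
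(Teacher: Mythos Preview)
Your approach diverges from the paper's and, as sketched, has a real gap.

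The paper never leaves the rainbow cycle $C = x_1 \cdots x_k x_1$ and needs no threshold $t$ at all: it shows that \emph{every} rainbow $Z_1$-free $(K,c)$ is rainbow $C_k$-free, by induction on $k$. The key observation you missed is that the pendant edge of $Z_1$ can itself be a cycle edge. For instance, $K[x_{k-1}x_k,\, x_kx_1,\, x_1x_{k-1},\, x_1x_2]$ is already a copy of $Z_1$ (triangle $x_{k-1}x_kx_1$ with pendant $x_1x_2$), so rainbow $Z_1$-freeness forces $c(x_1x_{k-1})$ into a small set of cycle colors; a second such configuration pins $c(x_1x_{k-1}) \in \{k-1, k\}$, and then $x_1 \cdots x_{k-1} x_1$ is a rainbow $C_{k-1}$, closing the induction. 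No separate search for a rainbow triangle, no outside vertex, no counting.

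Your pendant-edge step is where your plan actually fails. You assert that if every edge from the triangle $\{a,b,c\}$ to $V(K)\setminus\{a,b,c\}$ uses only the three triangle colors, then the total palette is ``severely limited'' --- but this is false: edges inside $V(K)\setminus\{a,b,c\}$ are unconstrained and can carry arbitrarily many further colors, so no contradiction with a large color count follows. Your Gallai-coloring remark has a similar defect: Gallai colorings of $K_n$ can use $n-1$ colors, and since $t$ must be a constant depending only on $Z_1$ and $C_k$ (not on $|V(K)|$), you cannot choose $t$ to exceed that bound. Gallai colorings are in fact harmless here, but for a different reason: they contain no rainbow $C_k$ for $k \ge 4$ either --- which is essentially the content of what you are trying to prove, not something you can invoke in advance.
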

\begin{proof}
We prove that every rainbow $Z_1$-free edge-colored
complete graph $(K, c)$ is rainbow $C_k$-free.
We proceed by induction on $k$.
\par
First,
suppose $k=4$
and assume that $(K, c)$ is not rainbow $C_4$-free.
Let $C=x_1x_2x_3x_4x_1$ be a rainbow cycle of order~$4$ in $(K, c)$.
We may assume $c(x_1x_2)=1$,
$c(x_2x_3)=2$,
$c(x_3x_4)=3$
and $c(x_4x_1)=4$.
Since $K[x_1x_2, x_2x_3, x_1x_3, x_1x_4]$ is isomorphic to $Z_1$
and $(K, c)$ is rainbow $Z_1$-free,
we have $c(x_1x_3)\in\{1, 2, 4\}$.
Similarly,
by considering $K[x_1x_2, x_2x_3, x_1x_3, x_3x_4]$,
we have
$c(x_1x_3)\in\{1, 2, 3\}$.
These imply $c(x_1x_3)\in\{1, 2\}$.
Then either $K[x_1x_3, x_3x_4, x_1x_4, x_1x_2]$
or $K[x_1x_3, x_3x_4, x_1x_4, x_2x_3]$
is a rainbow subgraph of $(K, c)$ isomorphic to $Z_1$.
This is a contradiction,
and the theorem follows in this case.
\par
Next,
suppose $k\ge 5$.
By the induction hypothesis,
$(K, c)$ is rainbow $C_{k-1}$-free.
Assume $(K, c)$ is not rainbow $C_k$-free,
and let $C=x_1x_2\dots x_kx_1$ be a rainbow cycle of order $k$ in $(K, c)$.
We may assume $c(x_ix_{i+1})=i$ for $1\le i\le k-1$
and $c(x_kx_1)=k$.
Since both $K[x_{k-1}x_k, x_kx_1, x_1x_{k-1}, x_1x_2]$
and $K[x_{k-1}x_k, x_kx_1, x_1x_{k-1}, x_{k-1}x_{k-2}]$
are isomorphic to $Z_1$,
we have $c(x_1x_{k-1})\in\{1, k-1, k\}$
and $c(x_1x_{k-1})\in\{k-2, k-1, k\}$.
These imply $c(x_1x_{k-1})\in \{k-1, k\}$.
Then $x_1x_2\dots x_{k-1}x_1$ is a rainbow cycle of order $k-1$.
This is a contradiction,
and the theorem follows.
\end{proof}
\par
Since $C_3\subseteq Z_1$,
we have the following corollary.
\begin{corollary}\label{C3Ck}
For every integer $k$ with $k\ge 3$,
$C_3\le C_k$.
\end{corollary}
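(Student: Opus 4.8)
The plan is to obtain $C_3\le C_k$ as an immediate consequence of Theorem~\ref{Z1} together with the transitivity of the preorder $\le$, so that the argument is essentially a two-step chain routed through the intermediate graph $Z_1$.

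First I would dispose of the degenerate case $k=3$: here $C_3\le C_3$ holds by reflexivity of $\le$, so there is nothing to prove, and I may assume $k\ge 4$ from this point on.

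Next, since $Z_1$ is obtained from $C_3$ by attaching a pendant edge, we have $C_3\subseteq Z_1$; by the basic observation recorded in the introduction that $H_1\subseteq H_2$ implies $H_1\le H_2$, it follows that $C_3\le Z_1$. On the other hand, Theorem~\ref{Z1} gives $Z_1\le C_k$ for every $k\ge 4$. Combining these two relations through transitivity of $\le$ yields $C_3\le C_k$, as desired.

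Since the whole argument rests on transitivity, there is no genuine obstacle here: all of the substantive work was already carried out in establishing $Z_1\le C_k$ in Theorem~\ref{Z1}. The only point worth a remark is bookkeeping of the threshold constant, namely that the integer $t(C_3, C_k)$ guaranteed by $C_3\le C_k$ may be taken as the maximum of the thresholds witnessing $C_3\le Z_1$ and $Z_1\le C_k$. The hardest part, such as it is, is simply recognizing that $C_3\subseteq Z_1$ permits comparing $C_3$ with $C_k$ via $Z_1$, rather than attempting a direct rainbow argument between $C_3$ and $C_k$.
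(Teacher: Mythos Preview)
Your argument is correct and matches the paper's own reasoning: the corollary is stated there immediately after Theorem~\ref{Z1} with the one-line justification ``Since $C_3\subseteq Z_1$, we have the following corollary,'' which is exactly the transitivity chain $C_3\le Z_1\le C_k$ that you spell out, together with the trivial reflexive case $k=3$.
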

By the same proof technique,
we have the following theorem
for cycles of order greater than~$3$.
\begin{theorem}\label{cycles}
For each integer $k$ and $t$ with $k\ge 3$ and $t\ge 1$,
$C_k\le C_{t(k-2)+2}$.
In particular,
for $t\ge 2$,
$C_4\le C_{2t}$.
\end{theorem}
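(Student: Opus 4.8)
The plan is to mimic the inductive strategy that proved Theorem~\ref{Z1}, but now keeping a longer cycle as the forbidden rainbow graph. We want to show $C_k \le C_{t(k-2)+2}$, i.e. that every rainbow $C_k$-free edge-colored complete graph (colored in sufficiently many colors) is rainbow $C_{t(k-2)+2}$-free. Fix $k\ge 3$. I would induct on $t$, the base case $t=1$ being trivial since $C_{k}=C_{(k-2)+2}$ and the relation $\le$ is reflexive. For the inductive step, assume $(K,c)$ is rainbow $C_k$-free and, by the induction hypothesis, rainbow $C_{(t-1)(k-2)+2}$-free; I want to conclude it is rainbow $C_{t(k-2)+2}$-free.

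The key step is a ``chord-shortening'' argument. Suppose for contradiction that $(K,c)$ contains a rainbow cycle $C=x_1x_2\dots x_N x_1$ of length $N=t(k-2)+2$, with $c(x_ix_{i+1})=i$ for $1\le i\le N-1$ and $c(x_Nx_1)=N$. The idea is to find a chord that lets me excise exactly $k-2$ edges of the cycle, replacing a path of length $k-1$ by a single chord, thereby producing a rainbow cycle of length $N-(k-2)=(t-1)(k-2)+2$, contradicting the induction hypothesis. Concretely I would look at the path $x_1\Cforw x_{k-1}$ (of length $k-2$, spanning vertices $x_1,\dots,x_{k-1}$) together with the chord $x_1x_{k-1}$. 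Closing this path with the chord gives a cycle of length $k-1$; but more to the point, the chord $x_1x_{k-1}$ together with the complementary arc $x_{k-1}\Cforw x_N\Cforw x_1$ (of length $N-(k-2)$) would form the desired shorter cycle provided the chord is colored with one of the colors already appearing on the arc we are deleting, so that the new cycle remains rainbow. The forbidden-$C_k$ condition is what pins down the chord color: the subgraph formed by the path $x_1\Cforw x_{k-1}$ plus the chord $x_1x_{k-1}$ is a cycle $C_{k-1}$, and by considering the appropriate $C_k$'s obtained by attaching one more boundary edge, I would force $c(x_1x_{k-1})$ into a small set of colors, exactly as the two constraints in the proof of Theorem~\ref{Z1} forced $c(x_1x_{k-1})\in\{k-1,k\}$.

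In more detail, I would use the two $C_k$'s that share the chord: the cycle on vertices $x_1,\dots,x_{k-1}$ closed by the chord has length $k-1$, so to build a $C_k$ I extend it by replacing the chord's endpoint with one neighbor on the cycle. Examining $K[x_1x_2,\dots,x_{k-2}x_{k-1},\,x_1x_{k-1},\,x_{k-1}x_k]$ and $K[x_1x_2,\dots,x_{k-2}x_{k-1},\,x_1x_{k-1},\,x_Nx_1]$---each a potential rainbow $C_k$---rainbow-$C_k$-freeness forces $c(x_1x_{k-1})$ to collide with a color already present, and the two constraints together pin it down to the two ``boundary'' colors (the analogues of $k-1$ and $k$). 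Either value lies in the color set of the arc being deleted, so the cycle $x_1x_{k-1}\Cforw x_N x_1$ built from the chord and the long arc is rainbow, and it has length $(t-1)(k-2)+2$, contradicting the induction hypothesis.

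I expect the main obstacle to be the bookkeeping of colors when the chord-induced cycle $C_{k-1}$ is closed into a $C_k$: I must verify that each auxiliary subgraph I invoke really is a cycle of length exactly $k$ (so that rainbow-$C_k$-freeness applies) and that the forced color of $c(x_1x_{k-1})$ genuinely coincides with a color on the deleted arc rather than on the retained one, so the resulting shorter cycle stays rainbow. A secondary subtlety is the boundary case $k=3$: there the path $x_1\Cforw x_{k-1}=x_1x_2$ is a single edge and the chord $x_1x_{k-1}$ would be a multi-edge, so the construction degenerates; this case should instead be routed through Corollary~\ref{C3Ck} (since $C_3\le C_m$ for all $m\ge 3$ is already established), and I would state at the outset that for $k=3$ the claim $C_3\le C_{t+2}$ follows directly from that corollary. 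Finally, the ``in particular'' clause for $C_4\le C_{2t}$ is just the specialization $k=4$, for which $t(k-2)+2=2t$, and requires no separate argument.
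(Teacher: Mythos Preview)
Your inductive plan is right and matches the paper, but the chord is off by one vertex, and this breaks both halves of the argument. You take the chord $x_1x_{k-1}$, and then the two ``potential rainbow $C_k$'' subgraphs you examine, e.g.\ $K[x_1x_2,\dots,x_{k-2}x_{k-1},\,x_1x_{k-1},\,x_{k-1}x_k]$, are \emph{not} copies of $C_k$: each is a $C_{k-1}$ with a pendant edge attached (a generalized $Z_1$), about which rainbow-$C_k$-freeness says nothing. This is exactly where the analogy with the proof of Theorem~\ref{Z1} fails---there the forbidden graph \emph{was} a triangle with a pendant, so triangle-plus-pendant subgraphs were the right objects; here they are not. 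There is also an arithmetic mismatch: replacing a path of length $k-2$ by a single chord yields a cycle of length $N-(k-2)+1=(t-1)(k-2)+3$, not $(t-1)(k-2)+2$ (you forgot to add back the chord edge), so the induction hypothesis would not apply to the long cycle even if you could force the chord color.

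The fix is to shift the chord to $x_1x_k$. Then the short cycle $x_1x_2\dots x_kx_1$ is itself a copy of $C_k$, so rainbow-$C_k$-freeness directly forces $c(x_1x_k)\in\{1,\dots,k-1\}$ with no auxiliary constructions. The long cycle $x_kx_{k+1}\dots x_Nx_1x_k$ now has exactly $N-(k-1)+1=(t-1)(k-2)+2$ edges and is rainbow (the chord repeats a color from the deleted arc, not the retained one), contradicting the induction hypothesis. This is precisely the paper's argument; the paper just phrases it in the contrapositive order (apply induction to the long cycle first, deduce $c(x_1x_k)\in[k,N]$, then the short cycle is a rainbow $C_k$). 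With this chord the case $k=3$ is no longer degenerate, since $x_1x_3$ is a genuine chord and $x_1x_2x_3x_1$ a genuine $C_3$, so the detour through Corollary~\ref{C3Ck} is unnecessary.
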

\begin{proof}
We prove that every rainbow $C_k$-free
edge-colored complete graph $(K, c)$ is
rainbow $C_{t(k-2)+2}$-free by induction on $t$.
If $t=1$,
there is nothing to prove.
Suppose $t\ge 2$,
and assume that there is a rainbow cycle $C=x_1x_2,\dots x_{t(k-2)+2}x_1$
in a rainbow $C_k$-free edge-colored complete graph $(K, c)$.
Without loss of generality,
we may assume $c(x_{i-1}x_i)=i$ for
$1\le i\le t(k-2)+2$,
where we consider $x_{t(k-2)+3}=x_1$.
Let $D=x_1x_2\dots x_kx_1$
and $D'=x_kx_{k+1}x_{k+2}\dots x_{t(k-2)+2}x_1 x_k$.
Then $D'$ is a cycle of order $(t-1)(k-2)+2$.
By the induction hypothesis,
$(K, c)$ is rainbow $C_{(t-1)(k-2)+2}$-free,
and hence $D'$ is not a rainbow cycle.
Since $D'-x_kx_1$ is a rainbow path,
this implies $c(x_1x_k)\in [k, t(k-2)+2]$.
Then $D$ is a rainbow cycle of order $k$
edge-colored in $\{1, 2,\dots, k-1, c(x_1x_k)\}$.
This is a contradiction.
\end{proof}
When we look at Corollary~\ref{C3Ck} and Theorem~\ref{cycles},
we may hope that $C_4\le C_k$ holds for sufficiently large odd integer $k$.
Unfortunately,
it is false.
\begin{theorem}
For every odd integer $k$,
$C_4\not\le C_k$.
\label{C4Ck}
\end{theorem}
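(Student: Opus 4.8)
The plan is to refute $C_4\le C_k$ directly from the definition: for every positive integer $t$ I must exhibit an edge-colored complete graph $(K,c)$ that uses at least $t$ colors, is rainbow $C_4$-free, and yet contains a rainbow $C_k$. A sanity check guides the design. By Theorem~\ref{cycles} (the even case $C_4\le C_{2t}$), a rainbow $C_4$-free complete graph with sufficiently many colors is automatically rainbow $C_{2s}$-free for every $s\ge 2$, so the coloring I build can contain no rainbow cycle of even length whatsoever. Hence the construction must use the oddness of $k$ essentially, and in particular must break down if $k$ is replaced by an even number.

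The key device is an amplification that boosts the number of colors while introducing no new rainbow $C_4$. Suppose I am handed a fixed gadget $(K_0,c_0)$, a complete graph whose order depends only on $k$, that is rainbow $C_4$-free and contains a rainbow $C_k$. Take a second complete graph $(K_1,c_1)$ on $m$ new vertices $v_1,\dots,v_m$ colored by the max-coloring $c_1(v_iv_j)=\max\{i,j\}$; as in the proof of Theorem~\ref{dim_tree}, this uses $m-1$ colors and contains no rainbow cycle at all. After relabeling so that the palettes of $c_0$ and $c_1$ are disjoint, form the complete graph $K$ on $V(K_0)\cup V(K_1)$ and color every edge joining the two parts with a single brand-new color $\ast$. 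Then $(K,c)$ is rainbow $C_4$-free: a cycle meeting both parts crosses the cut a positive even number of times, so it repeats the color $\ast$ and is not rainbow; thus every rainbow cycle lies inside $K_0$ or inside $K_1$, and neither contains a rainbow $C_4$. The rainbow $C_k$ inside $K_0$ survives, and $c$ uses at least $m-1$ colors, which exceeds $t$ once $m$ is large. This reduces the theorem to constructing the gadget $(K_0,c_0)$, where the number of colors no longer matters.

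To build the gadget I would place a rainbow Hamiltonian cycle $v_1v_2\cdots v_kv_1$ with colors $1,2,\dots,k$ on $K_k$ and color the $\binom{k}{2}-k$ chords, reusing only these $k$ colors, so as to destroy every rainbow $C_4$. The basic constraint is that a $4$-cycle formed by one chord and three consecutive cycle edges forces each chord joining vertices at cyclic distance $3$ to take one of the three colors spanned by its short arc; $4$-cycles using several chords are then easier to control because only few colors are in play. For $k=3$ there are no chords and a rainbow triangle already works. For $k=5$ an explicit assignment succeeds: with cycle colors $1,\dots,5$, setting the five chords $v_1v_3,v_2v_4,v_3v_5,v_4v_1,v_5v_2$ equal to $4,4,5,2,4$ respectively keeps the $5$-cycle rainbow and makes each of the fifteen $4$-cycles of $K_5$ repeat a color, as a short finite check confirms.

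The principal obstacle is exactly this last step for general odd $k$: finding a chord-coloring that annihilates every rainbow $4$-cycle of the gadget at once. I expect to proceed by casework on the number of chords in a prospective rainbow $4$-cycle — the cases with three or four chords being automatic once the long chords reuse very few colors, and the one- and two-chord cases being governed by the distance-$3$ constraint above — but closing all cases simultaneously is delicate. What makes it more than bookkeeping is that the rule must be genuinely parity-sensitive: were an analogous gadget to exist for an even $k$, amplifying it by the join above would yield a rainbow $C_4$-free complete graph with arbitrarily many colors containing a rainbow $C_k$, contradicting Theorem~\ref{cycles}. Thus any correct chord-coloring must exploit that $k$ is odd, and I anticipate that the final verification ultimately rests on this parity.
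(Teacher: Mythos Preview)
Your amplification step is correct and is essentially what the paper does as well (the paper pads the color count with disjoint matching edges $y_jz_j$ rather than a max-colored block joined by a single cross color, but the effect is identical). The genuine gap is that you never construct the gadget $(K_0,c_0)$ for general odd $k$: you handle $k=3$ and $k=5$ by ad hoc assignments and then defer the general case to unspecified casework on the number of chords in a $4$-cycle. That missing construction is the entire content of the theorem, and your distance-$3$ constraint alone does not determine a coloring of all chords.

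The paper's gadget is short and worth seeing. On $K[\{x_1,\dots,x_k\}]$ set
\[
c(x_ix_j)=\begin{cases}1 & \text{if } i\equiv j\pmod 2,\\ \max\{i,j\} & \text{if } i\not\equiv j\pmod 2.\end{cases}
\]
The Hamiltonian cycle $x_1x_2\cdots x_kx_1$ has edge colors $2,3,\dots,k$ on consecutive edges, and the closing edge $x_kx_1$ receives color $1$ precisely because $k$ is odd --- this is exactly where the parity enters. For rainbow $C_4$-freeness, take any $4$-cycle $x_{i_1}x_{i_2}x_{i_3}x_{i_4}x_{i_1}$ with $i_1$ the largest index; the two edges at $x_{i_1}$ have colors in $\{1,i_1\}$, so if the cycle is rainbow one of them is $1$ and the other is $i_1$, forcing $i_2\not\equiv i_4\pmod 2$. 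Then $i_3$ agrees in parity with one of $i_2,i_4$, producing a second edge of color $1$. So the ``parity-sensitive rule'' you anticipated is simply the parity of the vertex labels, and it kills every $4$-cycle in one stroke, with no chord-by-chord casework needed.
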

\begin{proof}
Assume,
to the contrary,
that $C_4\le C_k$ for some odd integer $k$.
Then there exists a positive integer $t$ such that
every rainbow $C_4$-free edge-colored complete
graph colored in $t$ or more colors
is rainbow $C_k$-free.
We may assume $t\ge k$.
\par
Introduce $k+2(t-k)$ vertices
$x_1, x_2,\dots, x_k, y_{k+1}, \dots y_t, z_{k+1},\dots, z_t$,
and let $X=\{x_1, x_2,\dots, x_k\}$
and $Y=\{y_{k+1}, y_{k+2},\dots, y_t, z_{k+1}, z_{k+2},\dots, z_t\}$.
Also let $K=K[X\cup Y]$ and define
$c\colon E(K)\to\naturalnumbers$
by
\[
c(e) =
\begin{cases}
1 & \text{if $e=x_ix_j$, $1\le i < j \le k$, and $i\equiv j\pmod{2}$}\\
\max\{i, j\}
& \text{if $e=x_ix_j$, $1\le i < j \le k$, and $i\not\equiv j\pmod{2}$}\\
j & \text{if $e=y_jz_j$, $k+1\le j\le t$}\\
1 & \text{otherwise.}
\end{cases}
\]
Note $c(x_ix_j)\ge 2$ if $i\not\equiv j\pmod{2}$
by the definition of $c$.
\par
Let $C = x_1x_2x_3\dots x_kx_1$.
Then $c(x_{i-1}x_i)=i$
($2\le i\le k$).
Moreover,
$c(x_kx_1)=1$ since $k$ is odd.
Therefore,
$C$ is a rainbow cycle of order $k$.
\par
We claim that $(K, c)$ is rainbow $C_4$-free.
Assume,
to the contrary,
that $(K, c)$ contains a rainbow cycle $D$ of order~$4$.
Since $K[Y]$ does not contain a rainbow cycle
and all the edges joining $X$ and $Y$ are colored in~$1$,
$D$ is contained in $K[X]$.
Let $D=x_{i_1}x_{i_2}x_{i_3}x_{i_4}x_{i_1}$.
Without loss of generality,
we may assume that $i_1$ is the largest index
among $\{i_1, i_2, i_3, i_4\}$.
Then since $D$ is rainbow,
$\{c(x_{i_1}x_{i_2}), c(x_{i_1}x_{i_4})\}=\{1, i_1\}$.
By symmetry,
we may assume $c(x_{i_1}x_{i_2})=1$ and
$c(x_{i_1}x_{i_4})=i_1$.
Then by the definition of $c$,
we have $i_1\equiv i_2\pmod{2}$
and $i_1\not\equiv i_4\pmod{2}$.
These imply $i_2\not\equiv i_4\pmod{2}$.
Then $i_3\equiv i_2\pmod{2}$ or $i_3\equiv i_4\pmod{2}$ and hence
$c(x_{i_2}x_{i_3})=1$ or $c(x_{i_3}x_{i_4})=1$.
This contradicts the fact that $D$ is rainbow.
\end{proof}
\subsection{Tree vs Cycle}
Next,
we investigate the relations between a tree and a cycle.
Given a tree $T$ and a cycle $C$,
we see by Theorem~\ref{dim_monotone}
that $C\le T$ never happens.
Therefore,
we have only to study the possibilities of $T\le C$.
\par
We call a vertex of degree at least $3$ a branch vertex.
By Theorem~\ref{invariants},
if $T\le C$,
then $\Delta(T)\le 3$ and $T$ has at most two branch vertices.
Moreover,
it $T$ has two branch vertices,
then they are adjacent.
\par
We first deal with the case in which $T$ has no branch vertex,
i.e.~$T$ is a path.
Note that
in the domain of $\HH$,
$P_5$ is the smallest path.
Let $k$ and $l$ be integers with $k\ge 3$ and $l\ge 5$.
If $l\ge k$,
then $P_k\subseteq C_l$ and hence $P_k\le C_l$ holds.
On the other hand,
by Theorem~\ref{invariants}~(1),
$P_k\not\le C_l$ if $l\le k-3$.
Therefore,
we only consider the case $k-2\le l\le k-1$.
For $l=k-2$,
we settle the problem as follows.
\begin{theorem}
For $k\ge 5$,
$P_k\not\le C_{k-2}$.
\label{Pk_Ck-2}
\end{theorem}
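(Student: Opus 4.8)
The relation $P_k\not\le C_{k-2}$ is a negative statement, so the plan is to exhibit, for every positive integer $t$, a single edge-colored complete graph $(K,c)$ that is colored in at least $t$ colors, contains a rainbow $C_{k-2}$, and yet is rainbow $P_k$-free. Such a family of examples contradicts the definition of $P_k\le C_{k-2}$ at every threshold $t$, and this is exactly what is required.

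For the construction I would use a small rainbow-dominating set. Fix an integer $s$ depending on $k$, take a set $A=\{a_1,\dots,a_s\}$ together with a large set $B$, and let $K=K[A\cup B]$. Color every edge meeting $A$ (that is, every edge inside $A$ and every edge between $A$ and $B$) with its own private color, and color every edge inside $B$ with one common color, say color $0$. Choosing $|B|$ large makes the number of colors $\binom{s}{2}+s|B|+1$ exceed $t$, so the color requirement is free. The heart of the analysis is the counting bound: in any rainbow subgraph at most one edge has color $0$, and every other edge is incident with $A$; since along a path or a cycle each vertex of $A$ meets at most two edges, a rainbow path has at most $2s+1$ edges, while a rainbow cycle avoiding color $0$ has at most $2s$ edges and one using color $0$ has at most $2s+1$ edges.

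When $k$ is odd I would take $s=(k-3)/2$, so that $2s+1=k-2$. Then no rainbow path has $k-1$ edges, whence $(K,c)$ is rainbow $P_k$-free, while the cycle $b_0a_1b_1a_2\cdots a_sb_sb_0$ (with $b_0,\dots,b_s\in B$ distinct) is a rainbow $C_{k-2}$: its $2s$ edges meeting $A$ carry distinct private colors and its single closing edge $b_sb_0$ has color $0$. This settles the odd case cleanly, and in particular handles $k=5$ with $s=1$ (a rainbow triangle $b_0a_1b_1b_0$ inside a graph whose longest rainbow path has only three edges).

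The main obstacle is the parity of $k$, which is precisely what makes Theorem~\ref{invariants}(1),(2) tight here, since $k-1=(k-2)+1$ and $k=(k-2)+2$: the construction must sit exactly on the boundary, realizing a rainbow cycle of length exactly $k-2$ while killing every rainbow path of length $k-1$. For even $k$ the desired $C_{k-2}$ has even length, and the plain construction produces an even rainbow cycle of length $2s$ only when $s=(k-2)/2$, which simultaneously creates a rainbow path on $2s+1=k-1$ edges, i.e.\ a rainbow $P_k$; so the monochromatic dense part cannot be used verbatim. The plan for even $k$ is therefore to refine the coloring so that the one extra edge which would stretch a maximum alternating rainbow path to $k-1$ edges is forced to repeat a color already on the path — for example by replacing the uniform color $0$ on $B$ with colors drawn from the $A$-palette, or by threading a short fresh-colored detour through two auxiliary vertices into the even cycle while otherwise preserving the rainbow-dominating structure. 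The difficulty, and the step I expect to require the most care, is verifying that such a refinement still contains a rainbow $C_{k-2}$ yet admits no rainbow $P_k$: one must rule out every reroute through the dense part that could assemble $k-1$ distinctly colored edges into a path, and the naive refinements tend to leak through exactly such a reroute, so the blocking mechanism must be chosen so that both endpoints of every maximum rainbow path are genuinely saturated.
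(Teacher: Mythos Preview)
Your construction for odd $k$ is correct, but for even $k$ you have not given a construction at all --- you have only described the obstacle and listed some hopes (``replacing the uniform color $0$ on $B$ with colors drawn from the $A$-palette'', ``threading a short fresh-colored detour'') while conceding that ``the naive refinements tend to leak through exactly such a reroute''. That is a genuine gap: as it stands the proposal proves the theorem only for odd $k$.

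The parity problem is an artifact of your choice to spread the hub over $s$ vertices. The paper avoids it completely by using a single hub $x_0$ together with a short rainbow path dangling from it. Concretely: take vertices $x_0,x_1,\dots,x_{t-1}$, give the path edges $x_{i-1}x_i$ colors $i$ for $1\le i\le k-4$, give the star edges $x_0x_j$ colors $j$ for $k-3\le j\le t-1$, and color every remaining edge with a single color $t$. Then $x_0x_1\cdots x_{k-4}x_{k-3}x_0$ is a rainbow $C_{k-2}$ (its edges carry the colors $1,\dots,k-4$, then $t$ on $x_{k-4}x_{k-3}$, then $k-3$ on $x_{k-3}x_0$). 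On the other hand, any rainbow path uses at most two edges through $x_0$, and outside $x_0$ the only colors present are $2,\dots,k-4$ and $t$, so at most $k-4$ further edges; hence at most $k-2$ edges in total, and $(K,c)$ is rainbow $P_k$-free. No case split on parity is needed. In effect this is your $s=1$ construction with the monochromatic $B$-part replaced by a $(k-5)$-edge rainbow path; the extra path supplies exactly the right number of additional colors outside the hub to reach a $C_{k-2}$ of either parity without overshooting into a $P_k$.
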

\begin{proof}
Assume $P_k \le C_{k-2}$,
Then there exists a positive integer $t$ such that
every rainbow $P_k$-free edge-colored complete graph $(K, c)$ colored
in $t$ or more colors is rainbow $C_{k-2}$-free.
We may assume $t\ge k-2$.
\par
Take $t$ vertices $x_0, x_1,\dots, x_{k-4} x_{k-3},\dots, x_{t-1}$.
Let $X=\{x_0, x_1,\dots, x_{t-1}\}$ and $K=K[X]$.
Define $c\colon E(K)\to\naturalnumbers$ by
\[
c(e) =
\begin{cases}
i & \text{if $e=x_{i-1}x_i, \quad 1\le i \le k-4$}\\
j & \text{if $e=x_0x_j, \quad k-3 \le j \le t-1$}\\
t & \text{otherwise.}    
\end{cases}
\]
Then $(K, c)$ is colored in $t$ colors and $x_0x_1\dots,x_{k-4}x_{k-3},x_{k-3}x_0$ is a rainbow
cycle of order $k-2$ colored in $\{1,\dots, k-4, t, k-3\}$.
On the other hand,
every rainbow path in $(K, c)$ can contain at most $2$ edges incident with $x_0$ and
the colors that appear in $K-x_0$ are $2,\dots, k-4, t$.
Therefore,
every rainbow path in $(K, c)$ can contain at most $k-2$ edges,
and hence $(K, c)$ is rainbow $P_k$-free.
This is a contradiction.
\end{proof}
\par
For $l=k-1$,
we do not know the answer in general.
However,
we know that $P_k\le C_{k-1}$ holds for $k\in\{5, 6\}$.
The fact $P_5\le C_4$ follows from the result in the next subsection
(Corollary~\ref{P5C4}).
We prove $P_6\le C_5$ here.
\begin{theorem}
$P_6\le C_5$
\label{P6C5}
\end{theorem}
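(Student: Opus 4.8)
The plan is to prove the contrapositive in quantitative form: I will fix $t=11$ and show that a rainbow $P_6$-free edge-colored complete graph $(K,c)$ cannot simultaneously be colored in $11$ or more colors and contain a rainbow $C_5$. So suppose $(K,c)$ is rainbow $P_6$-free and contains a rainbow $C_5$, say $C=x_1x_2x_3x_4x_5x_1$ with $c(x_ix_{i+1})=i$ (indices and colors read modulo $5$, so $c(x_5x_1)=5$). The whole argument is a color count: I will show that, apart from the five colors $\{1,\dots,5\}$ of $C$, a new color can occur only on one of the five chords of $C$, so $(K,c)$ uses at most $10$ colors, contradicting the hypothesis that it is colored in at least $11$.

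First I would control the edges joining $C$ to a vertex $w\notin V(C)$. Deleting one edge of $C$ leaves a rainbow $P_5$ using all five cycle vertices, and attaching $w$ at an endpoint $x_i$ of such a $P_5$ produces a rainbow $P_6$ unless $c(wx_i)$ already appears on the $P_5$. Since the two $P_5$'s ending at $x_i$ miss exactly the colors $i-1$ and $i$ of the two edges of $C$ at $x_i$, avoiding both rainbow $P_6$'s forces $c(wx_i)\in\{1,\dots,5\}\setminus\{i-1,i\}$; in particular every $C$-to-outside edge already carries a cycle color and contributes no new color.

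The heart of the proof is to show that an edge $ww'$ between two outside vertices also carries a cycle color. Suppose instead $c(ww')=\gamma\notin\{1,\dots,5\}$. Since $\gamma$ differs from every cycle color automatically, I can prepend $w'w$ to a rainbow $P_4$ lying on $C$ and beginning at $x_i$. Taking the clockwise $P_4$, namely $x_ix_{i+1}x_{i+2}x_{i+3}$, forces $c(wx_i)\in\{i,i+1,i+2\}$ to avoid a rainbow $P_6$, while the counterclockwise $P_4$, namely $x_ix_{i-1}x_{i-2}x_{i-3}$, forces $c(wx_i)\in\{i-1,i-2,i-3\}$; modulo $5$ these two sets meet only in $i+2$, so $c(wx_i)=i+2\pmod 5$ for every $i$, and symmetrically $c(w'x_i)=i+2\pmod 5$. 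But then the six distinct vertices $x_3,x_2,w,w',x_1,x_5$ span the path $x_3x_2ww'x_1x_5$ with colors $2,4,\gamma,3,5$, which are pairwise distinct because $\gamma$ is new, giving a rainbow $P_6$ and a contradiction. Hence every outside-to-outside edge also carries a cycle color.

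Combining the two claims, the only edges that may receive a color outside $\{1,\dots,5\}$ are the five chords of $C$, so $(K,c)$ uses at most $5+5=10$ colors, contradicting that it is colored in $11$ or more; therefore $P_6\le C_5$. (Note that $11$ colors already force $|V(K)|\ge 6$, since $\binom{5}{2}=10$, so at least one outside vertex exists and the first claim has content.) I expect the bookkeeping in the outside-to-outside step to be the main obstacle: pinning the forced values $c(wx_i)=i+2$ via the two oppositely oriented cyclic $P_4$'s, and then exhibiting the single explicit rainbow $P_6$ on $w$ and $w'$, requires careful cyclic indexing, whereas the remaining steps are routine once the labeling is fixed.
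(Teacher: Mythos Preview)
Your proof is correct and in fact cleaner than the paper's. Both arguments hinge on the same key observation: once an outside vertex $w$ is incident with an edge carrying a colour outside $\{1,\dots,5\}$, the two oppositely oriented cyclic $P_4$'s force $c(wx_i)=i+2\pmod 5$ for every $i$. The difference lies in how this observation is exploited.

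The paper invokes Theorem~\ref{tw} (Thomason--Wagner) to locate a rainbow $P_4=y_1y_2y_3y_4$ inside $K-V(C)$ with at most one ``old'' colour, which in turn requires the threshold $t\ge 14$ so that $K-V(C)$ carries at least four colours after recolouring. It then runs further case analysis on $c(y_2y_3)$ via several auxiliary paths before reaching a contradiction. You bypass all of this: given a single outside-to-outside edge $ww'$ of non-cycle colour $\gamma$, you apply the key observation symmetrically to \emph{both} $w$ and $w'$, and then exhibit one explicit rainbow $P_6$, namely $x_3x_2ww'x_1x_5$ with colours $2,4,\gamma,3,5$. Consequently every non-chord edge of $K$ already carries a cycle colour, and the total colour count is at most $10$, giving the sharper threshold $t=11$.

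So your route is more elementary (no appeal to Theorem~\ref{tw}), shorter, and yields a better constant. One minor point of exposition: when you deduce $c(wx_i)\in\{i,i+1,i+2\}$ from the clockwise $P_4$, you are implicitly using Step~2 to rule out $c(wx_i)=\gamma$; it would be worth saying this explicitly, but the logic is sound.
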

\begin{proof}
We prove that every rainbow $P_6$-free edge-colored complete graph
colored in $14$ or more colors is rainbow $C_5$-free.
\par
Assume,
to the contrary,
that there exists
a rainbow $P_6$-free edge-colored complete graph
$(K, c)$ colored in $t$ colors,
where $t\ge 14$,
which contains a rainbow cycle $C=x_1x_2x_3x_4x_5x_1$.
We may assume $c(E(K))=[1, t]$ and
$c(x_ix_{i+1})=i$ for $1\le i\le 5$,
where the indices of $x_i$ are taken modulo~$5$
in the range $[1, 5]$.
Since $K[V(C)]$ contains $10$ edges,
we may assume $c(E(K[V(C)]))\subset [1, 10]$.
Then since $t\ge 14$,
$V(K)-V(C)\ne\emptyset$.
For $x_i\in V(C)$ and a vertex $v$ in $V(K)-V(C)$,
consider the path $vx_ix_{i+1}x_{i+2}x_{i+3}x_{i+4}$.
Since $k\ge 5$,
it is a path of order~$6$ colored in
$\{c(x_iv), i, i+1, i+2, i+3\}$.
Since $(K, c)$ is rainbow $P_6$-free,
we have $c(x_iv)\in\{i, i+1, i+2, i+3\}$.
This implies that
every edge in $E_K(V(C), V(K)-V(C))$ receives a color in $[1, 5]$,
and hence the edges that receive a color in $[11, t]$ lie in $K-V(C)$.
\par
Let $y\in V(K)-V(C)$.
We claim that
if there exists an edge which is incident with $y$ and receives a color in $[11, t]$,
then $c(x_iy)=i+2$ holds for every $i$ with $1\le i\le 5$.
Let $e=yz$ be an edge in $K-V(C)$ with c$(e)\in[11, t]$.
Then $z\in V(K)-V(C)$.
Let $Q_1=zyx_ix_{i+1}x_{i+2}x_{i+3}$.
Then $Q_1$ is a path of order~$6$ colored in
$\{c(e), c(x_iy), i, i+1, i+2\}$.
Since $(K, c)$ is rainbow $P_6$-free while $c(e)\notin [1, 10]$ and
$c(x_iy)\in [1, 5]$,
we have $c(x_iy)\in\{i, i+1, i+2\}$.
By applying the same argument to
$zyx_ix_{i-1}x_{i-2}x_{i-3}$,
we also have $c(x_iy)\in\{i-1, i-2, i-3\}$.
Therefore,
noting $i+2\equiv i-3\pmod{5}$,
we have $c(x_iy)=i+2$,
and the claim follows.
\par
Re-color all the edges in $K-V(C)$ that receive a color in $[1, 10]$
in a new color $t+1$.
Let $c'$ be the resulting new coloring of $K-V(C)$.
Since $t\ge 14$,
$(K-V(C), c')$ is colored in at least~$4$ colors.
Then by Theorem~\ref{tw},
$(K-V(C), c')$ contains a rainbow path of order~$4$,
which implies that
$(K-V(C), c)$ contains a rainbow path $P=y_1y_2y_3y_4$
such that at most one edge in $P$ receives a color in $[1, 10]$.
Let $c(y_1y_2)=a_1$,
$c(y_2y_3)=a_2$ and $c(y_3y_4)=a_3$.
By the symmetry of the edges $y_1y_2$ and $y_3y_4$,
we may assume $a_1\notin [1, 10]$.
\par
Let $Q_2 = y_1y_2y_3x_1x_2x_3$.
Since at least one edge in $\{y_2y_3, y_3y_4\}$ receives a color
in $[11, t]$,
we have $c(x_1y_3)=3$ by the above claim.
Hence $Q_2$ is a path of order~$6$
colored in $\{a_1, c(y_2y_3), 3, 1, 2\}$.
Since $(K, c)$ is rainbow $P_6$-free,
$a_1\notin [1, 10]$
and $P$ is a rainbow path,
we have $c(y_2y_3)\in\{1, 2, 3\}$.
By applying the same argument to the path
$y_1y_2y_3x_1x_5x_4$,
we also have $c(y_2y_3)\in\{3, 4, 5\}$.
These imply $c(y_2y_3)=3$.
\par
Now let $Q_3=y_1y_2y_3x_4x_5x_1$.
Since $c(x_4y_3)=1$ by the claim,
$Q_3$ is a path of order~$6$,
edge-colored in $\{1, 3, 4, 5, a_1\}$.
This is a rainbow path,
which contradicts the hypothesis.
\end{proof}
\par
Next,
we consider trees with one branch vertex.
We call a tree with exactly one branch vertex a \textsl{spider}.
A path in a spider $S$ starting at its branch vertex
and ending at an endvertex is called a \textit{leg} of $S$,
and we call a spider having $k$ legs a \textit{$k$-legged spider}.
For positive integers $a$,~$b$ and~$c$,
We denote by $S_{a, b, c}$ the $3$-legged spider with legs
of length $a$,
$b$ and $c$.
Note $|V(S_{a, b, c})|=a+b+c+1$.
\par
The smallest $3$-legged spider is $S_{1, 1, 1}=K_{1,3}$,
and the second smallest one is $S_{2, 1, 1}=K^+_{1,3}$.
These two spiders play a special role in the preordered set $(\HH, \le)$
and we will discuss them in the next section.
\par
There are two $3$-legged spiders of order $6$~$\colon$~$S_{2, 2, 1}$
and~$S_{3, 1, 1}$
(Figure~\ref{spiders_figure}).
\begin{figure}
\centering
\includegraphics[width=0.4\textwidth]{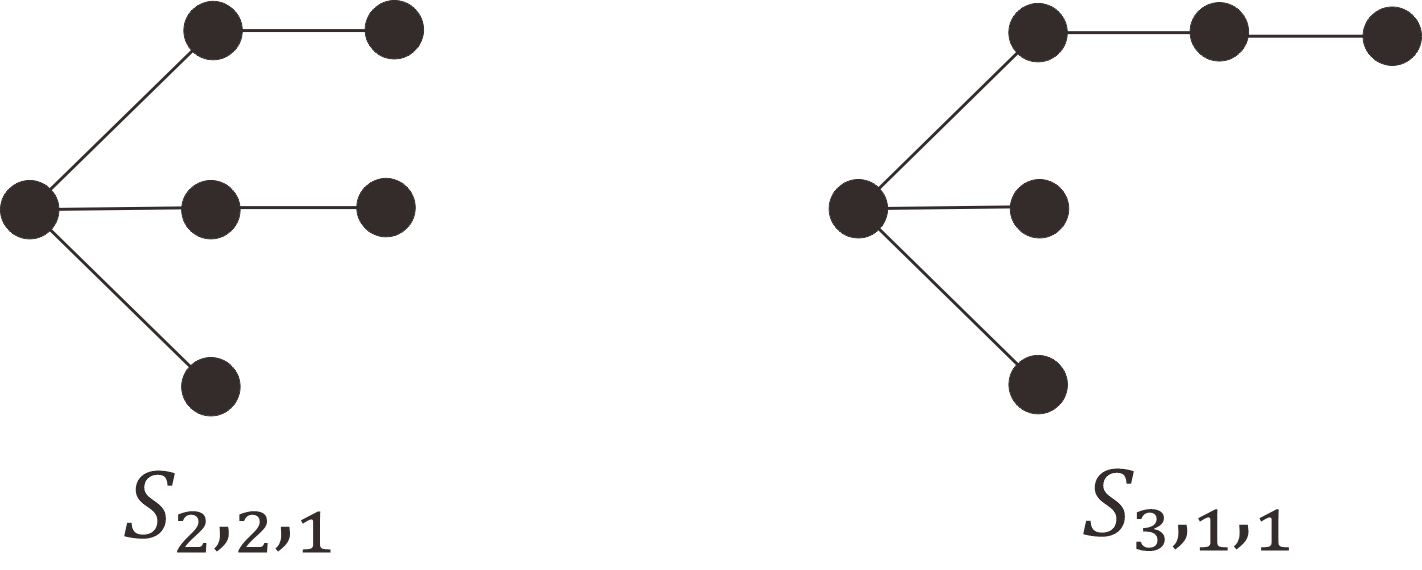}
\caption{$S_{2,2,1}$ and $S_{3,1,1}$}
\label{spiders_figure}
\end{figure}
We will determine which cycles $C$ satisfy $S_{2,2,1}\le C$ and
$S_{3, 1, 1}\le C$.
Note $S_{2,2,1}\not\le C_3$ and $S_{3,1,1}\not\le C_3$ by Theorem~\ref{invariants}~(1).
\begin{theorem}
$S_{2, 2, 1}\le C_k$ for every $k\ge 4$.
\label{S221_and_cycles}
\end{theorem}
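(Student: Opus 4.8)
The plan is to prove the contrapositive in a quantitative form: if a rainbow $S_{2,2,1}$-free edge-colored complete graph $(K,c)$ happens to contain a rainbow cycle $C=x_1x_2\cdots x_kx_1$, then $c$ can use only boundedly many colors, so that taking the threshold $t$ large enough forbids this configuration. The engine of the whole argument is a single local constraint. Suppose $p_1p_2p_3p_4p_5$ is a rainbow $P_5$ in $(K,c)$ and $u\notin\{p_1,\dots,p_5\}$. Then the tree with center $p_3$, legs $p_3p_2p_1$ and $p_3p_4p_5$, and pendant edge $p_3u$ is a copy of $S_{2,2,1}$; since $(K,c)$ is rainbow $S_{2,2,1}$-free, the edge $p_3u$ must repeat one of the four colors on the path, i.e.\ $c(p_3u)\in\{c(p_1p_2),c(p_2p_3),c(p_3p_4),c(p_4p_5)\}$. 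Thus the center of every rainbow $P_5$ sees only its four path-colors on all of its remaining edges.

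First I would dispose of all large $k$. Writing $c(x_ix_{i+1})=i$ with indices taken modulo $k$, every cycle vertex $x_j$ is the center of the rainbow $P_5$ given by $x_{j-2}x_{j-1}x_jx_{j+1}x_{j+2}$, whose four colors form a window $W_j$ of consecutive cycle-colors around $j$. Applying the local constraint at each $x_j$ shows that every edge $x_ju$ with $u$ outside this five-vertex window has color in $W_j\subseteq[1,k]$. Cycle edges lie in $[1,k]$ trivially; the short chords $x_jx_{j\pm2}$ are handled by an auxiliary spider centered at $x_j$ (legs $x_jx_{j-1}x_{j-2}$ and $x_jx_{j+2}x_{j+3}$, pendant $x_jx_{j+1}$), which forces their color to coincide with one of the cycle-colors used, again in $[1,k]$. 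Finally, for an edge $ww'$ between two vertices outside $V(C)$, I would build a rainbow $P_5$ centered at $w$ of the form $x_{i\pm1}x_iwx_jx_{j\pm1}$, choosing $i$ and $j$ far apart on $C$ so that their windows are disjoint and adjusting the two endpoints so that the four colors are distinct; the local constraint then gives $c(ww')\in[1,k]$. Consequently every edge of $K$ receives one of the at most $k$ colors on $C$, so $(K,c)$ uses at most $k$ colors, and taking $t=k+1$ contradicts the hypothesis that $(K,c)$ is colored in $t$ or more colors. This proves $S_{2,2,1}\le C_k$ for all sufficiently large $k$.

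The main obstacle is the finitely many small values, and here I expect the real work to lie. For the smallest $k$ the five-vertex windows cover almost all of $C$, so the auxiliary spiders used above collide; these few chord-colors must be pinned down individually using slightly different spiders, but the counting conclusion is unchanged. The genuinely different case is $k=4$, where $C$ is too short to contain the center of any $P_5$ and the window machinery never starts. There I would instead enlarge the threshold (taking $t\ge\binom{5}{2}+1=11$, which already forces $|V(K)|\ge 6$) and then assemble a rainbow $S_{2,2,1}$ directly out of the rainbow $C_4$ together with edges leaving $V(C)$. The crux, and the step I expect to be hardest, is a case analysis showing that rainbow $S_{2,2,1}$-freeness collapses the colors on the edges between $V(C)$ and the rest of $K$ so severely (by repeated use of the same local constraint applied to the $P_5$'s that $C_4$ spawns through external vertices) that it becomes incompatible with $(K,c)$ using many colors. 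The bound on $\Delta$ in Theorem~\ref{invariants}~(3), met with equality here, is a useful sanity check that no slack is being wasted in these constructions.
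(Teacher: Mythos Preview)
Your large-$k$ argument is correct and is genuinely different from the paper's. For $k\ge 8$ your ``window'' machinery shows that \emph{every} edge of $K$ receives a color already present on $C$, so the total number of colors is at most $k$; this yields the threshold $t=k+1$, which is dramatically sharper than the paper's $\frac{1}{2}k(k-1)+2k+4$. The paper never attempts to pin all colors down to $[1,k]$. Instead it proves, uniformly for all $k\ge 4$, the weaker claim that each cycle vertex $x_i$ sees at most two ``new'' colors (outside $c(E(K[V(C)]))$) on its edges to $V(K)\setminus V(C)$; this bounds the number of colors touching $V(C)$ by $\binom{k}{2}+2k$. It then recolors all ``old'' colors in $K-V(C)$ to a single fresh color and invokes Thomason--Wagner (Theorem~\ref{tw}) to find a rainbow $P_4=z_1z_2z_3z_4$ in $K-V(C)$ with at most one edge in an old color. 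That $P_4$ supplies two legs of an $S_{2,2,1}$ centered at $z_2$, and a short case analysis on $c(x_iz_2)$ then forces a rainbow $S_{2,2,1}$. So the paper trades your tight color bound for a single argument that needs no separate treatment of small $k$.

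There is, however, a real gap in your proposal: the small cases are not done, and your sketch does not close them. For $k\in\{5,6,7\}$ the windows $W_i$ exist but no two are disjoint, so your construction of a rainbow $P_5$ centered at an external vertex $w$ (needed to control $c(ww')$ for $w,w'\notin V(C)$) does not go through as written; you would have to argue more carefully that among the edges $x_iw$, whose colors lie in the overlapping $W_i$'s, one can always select two that extend to a rainbow $P_5$. This is plausible but is extra work you have not supplied. The case $k=4$ is worse: no $P_5$ lives on a $C_4$, so your ``local constraint'' never fires on cycle vertices and the window argument never begins. Your plan (``case analysis \ldots\ applied to the $P_5$'s that $C_4$ spawns through external vertices'') is circular as stated, since to produce such a $P_5$ you must already control at least one edge from $V(C)$ to the outside. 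A concrete missing idea here is exactly the one the paper uses: rather than bounding the colors of individual edges $x_iu$, bound the \emph{number of distinct new colors} such edges can use at a fixed $x_i$ (the paper gets ``at most $2$'' via two nested $S_{2,2,1}$'s built from $x_i$, three external neighbors, and a cycle $P_3$), and then manufacture the needed external $P_4$ via Thomason--Wagner. Plugging that lemma in would finish your $k=4$ case and likely simplify $k\in\{5,6,7\}$ as well.
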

Note $|V(S_{2, 2, 1})|=|V(C_4)|+2$,
$|E(S_{2, 2, 1})|=|E(C_4)|+1$ and
$\Delta(S_{2, 2, 1})=\Delta(C_4)+1$,
which attain the equality
in the inequalities of
Theorem~\ref{invariants}~(1),~(2)
and~(3).
\begin{proof}[\textbf{Proof of Theorem~\ref{S221_and_cycles}}]
We prove that every rainbow $S_{2, 2, 1}$-free edge-colored complete graph
$(K, c)$ colored in $\frac{1}{2}k(k-1)+2k+4$ or more colors
is rainbow $C_k$-free.
\par
Assume,
to the contrary,
that there exists a rainbow $S_{2, 2, 1}$-free edge-colored complete graph $(K, c)$
colored in $t$ colors,
where $t \ge \frac{1}{2}k(k-1)+2k+4$,
which contains a rainbow cycle $C=x_1x_2\dots x_kx_1$.
We may assume $c(x_ix_{i+1})=i$
$(1\le i\le k)$,
where the indices are taken modulo~$k$
in the range of $[1, k]$.
Since $|E(K[V(C)])|=\frac{1}{2}k(k-1)$,
we may assume $c(E(K[V(C)]))\subset \left[1, \frac{1}{2}k(k-1)\right]$.
\par
We claim that
for every $x_i\in V(C)$,
at most $2$ colors
in $\left[\frac{1}{2}k(k-1)+1, t\right]$
appear in the edges joining $x_i$ and $V(K)-V(C)$.
Assume,
to the contrary,
that there exist three edges $x_iy_1$,
$x_iy_2$ and $x_iy_3$ with $\{y_1, y_2, y_3\}\subset V(K)-V(C)$
such that $x_iy_j$ is colored in $a_j$
($j=1, 2, 3$),
where $a_1$,
$a_2$ and $a_3$ are three distinct colors in
$\left[\frac{1}{2}k(k-1)+1, t\right]$.
Let $T_1=K[x_iy_1, y_1y_2, x_ix_{i+1}, x_{i+1}x_{i+2}, x_ix_{i-1}]$.
Since $k\ge 4$,
$x_{i+2}\ne x_{i-1}$ and hence $T_1$ is isomorphic to $S_{2,2,1}$.
Then since $T_1$ is edge-colored in $\{i-1, i, i+1, a_1, c(y_1y_2)\}$ while
$(K, c)$ is rainbow $S_{2,2,1}$-free,
we have $c(y_1y_2)\in \{i-1, i, i+1, a_1\}$.
Next,
let $T_2=K[x_iy_2, y_2y_1, x_ix_{i-1}, x_{i-1}x_{i-2}, x_iy_3]$.
Then $T_2$ is isomorphic to $S_{2,2,1}$ and edge-colored
in $\{i-1, i-2, a_2, a_3, c(y_1y_2)\}$,
which yields $c(y_1y_2)\in\{i-1, i-2, a_2, a_3\}$.
These imply $c(y_1y_2)=i-1$.
\par
Now consider $T_3=K[x_iy_1, y_1y_2, x_ix_{i+1}, x_{i+1}x_{i+2}, x_iy_3]$.
Then $T_3$ is a rainbow subgraph of $(K, c)$ isomorphic to $S_{2,2,1}$
edge-colored in $\{i-1, i, i+1, a_1, a_3\}$.
This is a contradiction,
and the claim follows.
\par
By this claim,
we may assume that the colors that appear in the edges
belonging to $E(K[V(C)])\cup E_K(V(C), V(K)-V(C))$
are in $[1, \frac{1}{2}k(k-1)+2k]$.
This implies that all the edges colored in $\left[\frac{1}{2}k(k-1)+2k+1, t\right]$
appear in $K-V(C)$.
\par
Take every edge in $K-V(C)$ receiving a color in $\left[1, \frac{1}{2}k(k-1)+2k\right]$
and re-color it
in a new color $t+1$.
Let $c'$ be the resulting coloring of $K-V(C)$.
Note that possibly no edge in $(K-V(C), c)$ receives a color in $\left[1, \frac{1}{2}k(k-1)+2k\right]$
and hence the color $t+1$ may not arise in $K-V(C)$.
Since $t\ge \frac{1}{2}k(k-1)+2k+4$,
$(K-V(C), c')$ is colored in at least $4$ colors.
Then by Theorem~\ref{tw},
$(G-V(C), c')$ contains a rainbow path of order~$4$, 
which implies that $(K-V(C), c)$ contains a rainbow path of order~$4$
containing at most one edge colored in
$\left[1, \frac{1}{2}k(k-1)+2k\right]$.
\par
Let $P=z_1z_2z_3z_4$ be a rainbow path of order $4$
in $G-V(C)$ in which at most one edge receives a color
in $\left[\frac{1}{2}k(k-1)+2k\right]$.
Let $c(z_1z_2)=b_1$,
$c(z_2z_3)=b_2$ and $c(z_3z_4)=b_3$.
By the symmetry of the edges $z_1z_2$ and $z_3z_4$,
we may assume $b_1\notin \left[1, \frac{1}{2}k(k-1)+2k\right]$.
\par
Now,
we take $x_i\in V(C)$ and determine possible colors of $c(x_iz_2)$.
Let $T_4=K[x_iz_2, z_2z_1, x_ix_{i+1},$
$x_{i+1}x_{i+2}, x_ix_{i-1}]$.
Since $k\ge 4$,
we have $x_{i-1}\ne x_{i+2}$ and hence
$T_4$ is isomorphic to $S_{2, 2, 1}$.
Since $T_4$ is edge-colored in $\{i-1, i, i+1, b_1, c(x_iz_2)\}$
while $c(x_1z_2)\in \left[1, \frac{1}{2}k(k-1)+2k\right]$
and $b_1\notin\left[1, \frac{1}{2}k(k-1)+2k\right]$,
we have $c(x_iz_2)\in\{i-1, i, i+1\}$.
Similarly,
by considering $K[x_iz_2, z_2z_1, x_ix_{i-1},$
$x_{i-1}x_{i-2}, x_ix_{i+1}]$,
we also have $c(x_iz_2)\in \{i-2, i-1, i\}$.
Therefore,
we have
$c(x_iz_2)\in \{i-1, i\}$
for each $i$ with $1\le i\le k$.
\par
If $\{b_2, b_3\}\cap[1, k]=\emptyset$,
Then let
\[
T_5 =
\begin{cases}
K[z_2z_3, z_3z_4, z_2x_2, x_2x_3, z_2z_1]
& \text{if $c(x_2z_2)=1$}\\
K[z_2z_3, z_3z_4, z_2x_2, x_2x_1, z_2z_1]
& \text{if $c(x_2z_2)=2$.}    
\end{cases}
\]
Then in either case,
$T_5$ is a rainbow subgraph isomorphic to $S_{2, 2, 1}$ and edge-colored
in $\{1, 2, b_1, b_2, b_3\}$.
This is a contradiction.
Thus,
we have $b_2\in[1, k]$ or $b_3\in [1, k]$.
Let $\{b_2, b_3\}=\{i, \beta\}$,
where $i\in [1, k]$.
Then $\beta\notin \left[1, \frac{1}{2}k(k-1)+2k\right]$.
\par
Let
\[
T_6 = 
\begin{cases}
K[z_2z_3, z_3z_4, z_2x_{i+2}, x_{i+2}x_{i+3}, z_2z_1]
& \text{if $c(x_{i+2}z_2)=i+1$}\\
K[z_2z_3, z_3z_4, z_2x_{i+2}, x_{i+2}x_{i+1}, z_2z_1]
& \text{if $c(x_{i+2}z_2)=i+2$.}
\end{cases}
\]
Then $T_5$ is a rainbow subgraph of $(K, c)$
isomorphic to $S_{2,2,1}$ and edge-colored
in $\{b_1, \beta, i, i+1, i+2\}$.
This is a final contradiction,
and the theorem follows.
\end{proof}
\par
Since $P_5\subset S_{2, 2, 1}$,
we have the following corollary.
\begin{corollary}
$P_5\le C_4$
\label{P5C4}
\end{corollary}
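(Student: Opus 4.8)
The plan is to obtain this corollary as a purely formal consequence of Theorem~\ref{S221_and_cycles}, using the fact that $\le$ is a preorder and hence transitive. No genuinely new argument is needed; the entire substance lives in the already-proved relation $S_{2,2,1}\le C_4$.

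First I would verify the containment $P_5\subsetneq S_{2,2,1}$. Writing $v$ for the branch vertex of $S_{2,2,1}$ and following its two legs of length~$2$, say $v a_1 a_2$ and $v b_1 b_2$, the walk $a_2 a_1 v b_1 b_2$ is a path on five vertices and four edges sitting inside $S_{2,2,1}$. Hence $P_5\subseteq S_{2,2,1}$, and this containment is proper since $S_{2,2,1}$ has an extra leg of length~$1$. By the trivial monotonicity noted in the introduction, $H_1\subseteq H_2$ implies $H_1\le H_2$: indeed any subgraph of a rainbow subgraph is again rainbow, so a rainbow $H_1$-free graph is automatically rainbow $H_2$-free whenever $H_1\subseteq H_2$. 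Applying this to $P_5\subseteq S_{2,2,1}$ yields $P_5\le S_{2,2,1}$.

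Second, I would specialize Theorem~\ref{S221_and_cycles} to the case $k=4$, which directly gives $S_{2,2,1}\le C_4$. Finally, since $\le$ is reflexive and transitive, chaining $P_5\le S_{2,2,1}$ and $S_{2,2,1}\le C_4$ produces $P_5\le C_4$, as claimed.

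The main obstacle, if one wishes to call it that, has already been overcome inside Theorem~\ref{S221_and_cycles}; the corollary itself presents no difficulty, being nothing more than transitivity applied to a subgraph containment and a previously established inequality.
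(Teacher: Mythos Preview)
Your proposal is correct and follows exactly the paper's own approach: the paper deduces the corollary by noting $P_5\subset S_{2,2,1}$ and invoking Theorem~\ref{S221_and_cycles} at $k=4$, which is precisely your argument via transitivity of~$\le$.
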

\begin{theorem}
\leavevmode \vspace{-1ex}
\begin{enumerate}
\item
$S_{3, 1, 1}\not\le C_4$
\item
For each $k\ge 5$,
$S_{3, 1, 1}\le C_k$.
\end{enumerate}
\label{S311_and_cycles}
\end{theorem}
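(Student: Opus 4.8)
For part (1) the plan is to exhibit, for every positive integer $t$, a rainbow $S_{3,1,1}$-free edge-colored complete graph colored in $t$ colors that contains a rainbow $C_4$; this rules out any threshold witnessing $S_{3,1,1}\le C_4$. Mimicking the constructions in the proofs of Theorems~\ref{C4Ck} and~\ref{Pk_Ck-2}, I would take a colorful core $X=\{x_1,x_2,x_3,x_4\}$ together with $t-4$ disjoint pairs $\{y_j,z_j\}$ ($5\le j\le t$) that exist only to manufacture colors, set $K=K[X\cup\{y_j,z_j: 5\le j\le t\}]$, and define $c$ by $c(x_1x_2)=1$, $c(x_2x_3)=2$, $c(x_3x_4)=3$, $c(y_jz_j)=j$, and $c(e)=4$ for every other edge $e$. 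Then $x_1x_2x_3x_4x_1$ is a rainbow $C_4$ and $c$ uses exactly the colors $[1,t]$. The content is to verify rainbow $S_{3,1,1}$-freeness through color degrees: every pair-vertex as well as $x_1$ and $x_4$ has color degree $2$, so the only candidates for a branch vertex are $x_2$ and $x_3$, each of color degree $3$. By the symmetry $x_1\leftrightarrow x_4$, $x_2\leftrightarrow x_3$ it suffices to treat $x_2$, whose three legs must realize the colors $1,2,4$; a short case analysis shows that, because color $4$ saturates the graph away from the three core edges, the length-$3$ leg can never be completed rainbow without repeating $4$ or reusing a vertex. Hence $(K,c)$ is rainbow $S_{3,1,1}$-free and $S_{3,1,1}\not\le C_4$.

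For part (2) I would follow the scheme of Theorem~\ref{S221_and_cycles}. Assume a rainbow $S_{3,1,1}$-free complete graph $(K,c)$ colored in $t\ge\frac12 k(k-1)+4$ colors contains a rainbow cycle $C=x_1\cdots x_kx_1$ with $c(x_ix_{i+1})=i$, and assume $c(E(K[V(C)]))\subseteq[1,\frac12 k(k-1)]$. The first key step is a gadget bound: for every $v\in V(K)-V(C)$ and every $i$, I claim $c(x_iv)\in\{i-1,i\}$. Taking $x_i$ as branch, the path $x_ix_{i+1}x_{i+2}x_{i+3}$ as a length-$3$ leg, and $x_iv$, $x_ix_{i-1}$ as the two length-$1$ legs produces a copy of $S_{3,1,1}$ (here $k\ge5$ is used, so that $x_{i-1},x_i,\dots,x_{i+3}$ are five distinct vertices) with color set $\{i-1,i,i+1,i+2,c(x_iv)\}$; rainbow-freeness forces $c(x_iv)\in\{i-1,i,i+1,i+2\}$. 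Running the cycle backwards gives $c(x_iv)\in\{i-3,i-2,i-1,i\}$, and intersecting yields $c(x_iv)\in\{i-1,i\}$.

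In particular every edge between $V(C)$ and $V(K)-V(C)$ gets a color in $[1,k]$, so all colors exceeding $\frac12 k(k-1)$ occur only inside $K-V(C)$. Re-coloring the remaining edges of $K-V(C)$ with one fresh color leaves at least four colors there, so by Theorem~\ref{tw} there is a rainbow path $z_1z_2z_3z_4$ in $K-V(C)$ with at most one edge of color $\le\frac12 k(k-1)$; by the symmetry of the path I may assume $c(z_1z_2)$ is large, so at least two of $c(z_1z_2),c(z_2z_3),c(z_3z_4)$ are large. I would then assemble the final rainbow $S_{3,1,1}$ with branch $z_1$, long leg $z_1z_2z_3z_4$, and two short legs $z_1x_p$, $z_1x_q$: since $c(z_1x_i)\in\{i-1,i\}$ these short-leg colors are small and automatically differ from the large long-leg colors, so choosing $p,q$ suitably among the $k$ cycle vertices I can make them distinct from each other and from the (at most one) small color among $c(z_2z_3),c(z_3z_4)$, yielding a rainbow $S_{3,1,1}$ and the desired contradiction.

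The main obstacle is this last assembly: one must guarantee that two short legs can be attached with colors distinct from each other and from the three long-leg colors while all six vertices stay distinct. This is exactly where $k\ge5$ is essential, since the cycle must accommodate a length-$3$ leg together with two further distinctly colored pendant edges, and it is also where $k=4$ breaks (consistently with part (1), as then $x_{i+3}=x_{i-1}$ and the gadget degenerates). The bookkeeping is delicate for the smallest cases $k\in\{5,6\}$, where the windows $\{i-1,i,i+1,i+2\}$ and $\{i-3,i-2,i-1,i\}$ overlap more than for large $k$ and the choice of $p,q$ must be verified by hand.
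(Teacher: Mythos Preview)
Your construction for part~(1) is essentially the paper's (the paper uses colour~$1$ as the filler instead of~$4$), and your case analysis is correct.

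For part~(2) there is a genuine gap at $k=5$. Your claimed ``gadget bound'' $c(x_iv)\in\{i-1,i\}$ is obtained by intersecting $\{i-1,i,i+1,i+2\}$ with $\{i-3,i-2,i-1,i\}$ modulo~$k$; for $k=5$ one has $i+2\equiv i-3$, so the intersection is $\{i-1,i,i+2\}$, not $\{i-1,i\}$. More seriously, your final assembly (branch $z_1$, long leg $z_1z_2z_3z_4$, short legs $z_1x_p,z_1x_q$) can fail outright for $k=5$: nothing you have derived excludes, for example,
\[
c(z_1x_1)=c(z_1x_2)=c(z_1x_4)=1,\qquad c(z_1x_3)=c(z_1x_5)=2,
\]
together with $b_2=1$ being the unique small colour on $P$. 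Each value lies in the required set $\{i-1,i,i+2\}$, yet every $c(z_1x_i)\in\{1,2\}$, so there is no choice of $p,q$ with $c(z_1x_p),c(z_1x_q)$ distinct and both different from $b_2=1$. Saying the case ``must be verified by hand'' does not close this; the proposed spider simply does not exist in this configuration.

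The paper avoids this by choosing the branch and the long leg differently. Instead of $z_1$, it uses $z_2$ (which has \emph{two} known neighbours $z_1,z_3$ on $P$) as the branch, and sends the long leg \emph{into} the cycle: the spider with legs $z_2x_ix_{i+1}x_{i+2}$, $z_2z_1$, $z_2z_3$ forces $c(x_iz_2)\in\{i,i+1,b_2\}$, and the reversed spider forces $c(x_iz_2)\in\{i-1,i-2,b_2\}$. Since $\{i,i+1\}\cap\{i-1,i-2\}=\emptyset$ for every $k\ge 5$, this pins down $c(x_iz_2)=b_2$ for \emph{all}~$i$, and a single spider with branch $x_{b_2+2}$ on the cycle then gives the contradiction uniformly. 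The point is that routing the long leg along $C$ (where three consecutive edge-colours are known) rather than along $P$ (where only $b_1$ is guaranteed large) yields a two-element window whose intersection with its reverse is already controlled by the escape colour $b_2$, and this is exactly what your assembly lacks at $k=5$.
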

\begin{proof}
(1)\ Assume $S_{3, 1, 1}\le C_4$.
Then there exists a positive integer $t$
such that every rainbow $S_{3, 1, 1}$-free edge-colored complete graph
colored in $t$ or more colors is
rainbow $C_4$-free.
We may assume $t\ge 4$.
\par
Introduce $4+2(t-4)$ vertices
$x_1, x_2, x_3, x_4$,
$y_5,\dots, y_t$
and $z_5\dots, z_t$,
and let $X=\{x_1, x_2, x_3, x_4\}$
and $Y=\{y_5,\dots, y_t, z_5,\dots, z_t\}$.
Also,
let $K=K[X\cup Y]$.
Define $c\colon E(K)\to\naturalnumbers$ by
\[
c(e) = 
\begin{cases}
i & \text{if $e=x_ix_{i+1}$, $1\le i\le 4$}\\
j & \text{if $e=y_jz_j$, $5\le j\le t$}\\
1 & \text{otherwise,}
\end{cases}
\]
where we consider $x_5=x_1$.
Then $(K, c)$ is colored in $t$ colors.
Moreover,
$x_1x_2x_3x_4x_1$ is a rainbow cycle of order~$4$
edge-colored in $\{1, 2, 3, 4\}$.
Thus,
$(K, c)$ contains a rainbow subgraph $T$
which is isomorphic to $S_{3,1,1}$.
\par
Let $u$ be the branch vertex of $T$.
By the definition of $c$,
$\Delta_c(K, c)=3$ and
$x_3$ and $x_4$ are the only vertices of color degree~$3$
in $(K, c)$.
Hence we have either $u=x_3$ or $u=x_4$.
By symmetry,
we may assume $u=x_3$.
Then $x_3x_2, x_3x_4\in E(T)$
and the edge in $T$ incident with $x_3$ other than $x_3x_2$ and $x_3x_4$
is colored in~$1$.
This implies $x_1x_2\notin E(T)$
and hence every leg of $T$ has length at most~$2$.
This contradicts the assumption that $T$ is isomorphic to $S_{3,1,1}$.
\medbreak\noindent
(2)~We prove that every rainbow $S_{3, 1, 1}$-free edge-colored complete graph
$(K, c)$ colored in $\frac{1}{2}k(k-1)+4$ or more colors is rainbow $C_k$-free.
Assume,
to the contrary,
that there exists a rainbow $S_{3, 1, 1}$-free
edge-colored complete graph $(K, c)$
colored in $t$ colors,
where $t\ge\frac{1}{2}k(k-1)+4$,
which contains a rainbow cycle
$C=x_1x_2\dots x_kx_1$.
We may assume $c(x_ix_{i+1})=i$,
$1\le i\le k$,
where the indices of $x_i$ are taken modulo~$k$
in the range of $[1, k]$.
Since $|E(K[V(C)])|=\frac{1}{2}k(k-1)$,
we may assume $c(E(K[V(C)]))\subset\left[1, \frac{1}{2}k(k-1)\right]$.
Since $t\ge\frac{1}{2}k(k-1)+4$,
this implies $V(K)-V(C)\ne\emptyset$.
\par
Take an edge $x_iy$,
with $1\le i\le k$ and $y\in V(K)-V(C)$.
let $T_1=K[x_ix_{i+1}, x_{i+1}x_{i+2},$
$x_{i+2}x_{i+3}, x_ix_{i-1}, x_iy]$.
Since $k\ge 5$,
$x_{i+3}\ne x_{i-1}$ and hence
$T_1$ is isomorphic to $S_{3, 1, 1}$.
Also $T_1$ is edge-colored in $\{i-1, i, i+1, i+2, c(x_iy)\}$.
Since $(K, c)$ is rainbow $S_{3, 1, 1}$-free,
we have $c(x_iy)\in\{i-1, i, i+1, i+2\}$.
This implies that every edge joining $V(C)$ and $V(K)-V(C)$ is assigned a color in $[1, k]$.
Therefore,
all the edges colored in $\left[\frac{1}{2}k(k-1)+1, t\right]$ lie in $K-V(C)$.
\par
Take every edge in $K-V(C)$ receiving a color in $\left[1, \frac{1}{2}k(k-1)\right]$
and re-color it in a new color $t+1$.
Let $c'$ be the resulting coloring of $K-V(C)$.
Since $t\ge \frac{1}{2}k(k-1)+4$,
$(K-(C), c')$ is colored in at least $4$ colors.
Then by Theorem~\ref{tw},
$(K-V(C), c')$ contains a rainbow path of order~$4$,
which implies that $(K-V(C), c)$ contains a rainbow path of order~$4$
containing at most one edge colored in
$\left[1, \frac{1}{2}k(k-1)\right]$.
\par
Let $P=z_1z_2z_3z_4$ be a rainbow path of order $4$ in ($K-V(C), c)$
in which at most one edge receives a color in $\left[1, \frac{1}{2}k(k-1)\right]$.
Let $c(z_1z_2)=b_1$,
$c(z_2z_3)=b_2$ and $c(x_3x_4)=b_3$.
By the symmetry of the edges $z_1z_2$ and $z_3z_4$,
we may assume $b_1\notin \left[1, \frac{1}{2}k(k-1)\right]$.
\par
For $i\in [1, k]$,
let $T_2=K[z_2x_i, x_ix_{i+1}, x_{i+1}x_{i+2}, z_2z_1, z_2z_3]$.
Then $T_2$ is isomorphic to $K_{3,1,1}$ and
edge-colored in $\{i, i+1, b_1, b_2, c(x_iz_2)\}$.
Since $(K, c)$ is rainbow $S_{3,1,1}$-free while
$c(x_iz_2)\in[1, k]$ and $b_1\notin \left[1, \frac{1}{2}k(k-1)\right]$,
we have $c(x_iz_2)\in \{i, i+1, b_2\}$.
By applying the same argument to
$K[z_2x_i, x_ix_{i-1}, x_{i-1}x_{i-2}, z_2z_1, z_2z_3]$,
we have $c(x_iz_2)\in \{i-1, i-2, b_2\}$.
Since $k\ge 5$,
we have
$\{i, i+1\}\cap\{i-1, i-2\}=\emptyset$ and hence
$c(x_iz_2)=b_2$ for every $i$ with $1\le i\le k$.
In particular,
$b_3 \notin\left[1, \frac{1}{2}k(k-1)\right]$,
and since every edge in $E_K(V(C), V(K-V(C)))$
is colored in $[1, k]$,
we have 
$b_2\in [1,k]$.
\par
Let $T_3=K[x_{b_2+2}x_{b_2+3}, x_{b_2+3}x_{b_2+4}, x_{b_2+4}x_{b_2+5},
x_{b_2+2}x_{b_2+1}, x_{b_2+2}z_2]$.
Since $k\ge 5$,
$x_{b_2+1}\ne x_{b_2+5}$ and hence
$T_3$ is isomorphic to $S_{3,1,1}$.
Moreover,
since $c(x_{b_2+2}z_2)=b_2$,
$T$ is a rainbow subgraph
edge-colored in $\{b_2, b_2+1, b_2+2, b_2+3, b_2+4\}$.
This is a contradiction,
and the theorem follows.
\end{proof}
\par
Finally,
we consider trees containing two branch vertices of degree~$3$.
For these trees,
we know little.
We only determine the relation between the smallest one,
which is a barbell,
and cycles.
\begin{theorem}
\leavevmode
\begin{enumerate}
\item
$B\le C_k$ for each $k\ge 6$.
\item 
$B\not\le C_k$ for $k\in\{3, 4, 5\}$.
\end{enumerate}
\label{barbell_cycles}
\end{theorem}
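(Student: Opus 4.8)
The plan is to prove (2) by explicit constructions and (1) by a gadget argument in the spirit of the proofs of Theorems~\ref{S221_and_cycles} and~\ref{S311_and_cycles}.

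For (2) the case $k=3$ is immediate from Theorem~\ref{invariants}~(1), since $|E(B)|=5>4=|E(C_3)|+1$. For $k\in\{4,5\}$ I would, for every positive integer $t$, build a rainbow $B$-free edge-colored complete graph on $t$ colors containing a rainbow $C_k$. Take $X=\{x_1,\dots,x_k\}$ together with padding pairs $y_j,z_j$ ($k+1\le j\le t$); set $c(x_ix_{i+1})=i$ (indices mod $k$) so that $x_1\cdots x_kx_1$ is a rainbow $C_k$, set $c(y_jz_j)=j$, and color every remaining edge $1$. Thus the colors $2,\dots,k$ each occur on a single cycle edge, the colors $j\ge k+1$ form a matching, and color~$1$ carries everything else. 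A rainbow copy of $B$ has five edges in five colors, hence uses at most one color-$1$ edge, at most two matching edges (the matching is globally independent and $B$ has matching number $2$), and some of the $k-1$ colored cycle edges, which form a path on $k$ vertices. Writing $(a,b,e)$ for the numbers of colored-cycle, matching and color-$1$ edges used, so $a+b+e=5$ with $b\le 2,\ e\le 1$, I would check that every feasible triple is impossible: using all colored cycle edges forces a $P_5\subseteq B$ when $k=5$ (absent, since the longest path in $B$ is $P_4$), while any configuration in which a colored $P_4$ meets a matching edge fails because the two branch vertices of $B$ would each need a pendant, which a matching edge (disjoint from $X$) cannot supply, leaving two color-$1$ pendants though only one is allowed; all remaining triples place at least seven vertices inside the six-vertex graph $B$ or disconnect it. Hence $B\not\le C_4$ and $B\not\le C_5$.

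For (1), assume for contradiction that a rainbow $B$-free $(K,c)$ with sufficiently many colors contains a rainbow cycle $C=x_1\cdots x_kx_1$ with $c(x_ix_{i+1})=i$; as before I may take $c(E(K[V(C)]))\subset[1,\binom{k}{2}]$, and call a color \emph{large} if it exceeds $\binom{k}{2}+2k$, so that every large color lives off $K[V(C)]$. The structural novelty of the barbell is that it has \emph{two} adjacent branch vertices, so a barbell whose spine follows $C$ necessarily leaves $C$ through \emph{two} pendant edges; thus each gadget controls two free edges at once and only yields disjunctive constraints, unlike the single-branch-vertex spider gadgets. The first key step is to guarantee that the exterior $K-V(C)$ still carries at least four colors. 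I would do this by contradiction: if not, then all but finitely many of the (many) colors appear only on edges between $V(C)$ and the exterior, so by pigeonhole among the $k$ cycle vertices at least $\lfloor k/2\rfloor+1$ of them each carry two distinct large cross colors, and two such ``rich'' vertices must then be adjacent. For an adjacent rich pair $x_i,x_{i+1}$, choosing large colors $\beta,\beta'$ at $x_i$ and a third large color $\beta''\notin\{\beta,\beta'\}$ at $x_{i+1}$, the barbell with centers $x_i,x_{i+1}$, leaves $\beta,\beta'$ at $x_i$ and $\beta'',x_{i+2}$ at $x_{i+1}$, and spine edge of color $i$, is rainbow in the five colors $\beta,\beta',\beta'',i,i+1$ --- a contradiction.

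Knowing the exterior is rich, I would recolor the small-colored edges of $K-V(C)$ by one new color and apply Theorem~\ref{tw} to extract a rainbow path $z_1z_2z_3z_4$ in $K-V(C)$ with large distinct colors $\alpha_1,\alpha_2,\alpha_3$. The finish is a short dichotomy on the cross edges from $z_2$ and $z_3$ to $C$, using two barbell shapes: gadget~(II) centers a barbell at a \emph{cycle edge} $z_2x_a$, with leaves $z_1,z_3$ (colors $\alpha_1,\alpha_2$) at $z_2$ and $x_{a-1},x_{a+1}$ (colors $a-1,a$) at $x_a$, which is rainbow once $c(z_2x_a)\notin\{\alpha_1,\alpha_2,a-1,a\}$; gadget~(I) centers a barbell at the spine edge $z_2z_3$ with pendants $z_2x_a$ and $z_3x_b$. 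If some cross edge at $z_2$ is large and different from $\alpha_1,\alpha_2$, gadget~(II) finishes at once; otherwise all such cross colors are small and pinned into $\{a-1,a\}$, and since a value in $\{a-1,a\}$ for every $a$ cannot be globally constant for $k\ge 6$, I can pick $a\ne b$ with $c(z_2x_a)\ne c(z_3x_b)$ and finish with gadget~(I). The hypothesis $k\ge 6$ is exactly what provides enough room on $C$ for all these gadgets to avoid collisions, matching the failure of the statement at $k=5$ in part~(2). I expect the main obstacle to be precisely this cross-edge bookkeeping: because the two-branch-vertex structure of $B$ only yields joint constraints, establishing the exterior richness step and then driving the final dichotomy to an honest rainbow barbell are the delicate parts of the argument.
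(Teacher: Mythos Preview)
Part~(2) is fine: your construction is identical to the paper's, and your $(a,b,e)$ case analysis can be made to work, though the paper's argument is cleaner --- it observes that both branch vertices of a rainbow barbell must have color degree~$3$, hence lie in $\{x_3,\dots,x_k\}$, that color~$1$ then appears among the three edges at each of them, so the spine must carry color~$1$, and then the four remaining edges form two disjoint $P_3$'s on the cycle, which requires $k\ge 6$.

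Part~(1) has two genuine gaps. First, the pigeonhole in your exterior-richness step is wrong: having many large cross colors does not force $\lfloor k/2\rfloor+1$ cycle vertices to be rich, since all of those colors could be incident with a single $x_i$; and even granting two adjacent rich vertices, your barbell there tacitly needs the exterior leaf at $x_{i+1}$ to be a vertex distinct from both exterior leaves at $x_i$, which you have not arranged. The paper replaces this entirely with a direct bound: each $x_i$ carries at most one large cross color, because if $x_iu,x_iv$ give two, the barbell with centers $x_i,x_{i+1}$ and leaves $u,v,x_{i+2},x_{i-1}$ forces $c(x_{i-1}x_{i+1})\in\{i,i+1\}$, whereupon the barbell with centers $x_i,x_{i-1}$ and leaves $u,v,x_{i-2},x_{i+1}$ is rainbow. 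This caps the cross colors at $k$ and gives exterior richness immediately. Second, your endgame is incomplete: Theorem~\ref{tw} only guarantees that two of $\alpha_1,\alpha_2,\alpha_3$ are large, so gadget~(II) at $z_2$ yields only $c(z_2x_a)\in\{a-1,a,\alpha_2\}$ when $\alpha_2$ may be small, and the ``$\{a-1,a\}$ for every $a$'' pinning and the subsequent gadget~(I) dichotomy do not close. The paper's finish is genuinely different: it constrains the chords $x_ix_j$ for \emph{distant pairs} (both arcs of length $\ge 3$) to have $c(x_ix_j)\in\{i-1,i,j-1,j\}$, selects a distant pair whose index set avoids $b_2$ (using $I(1,4)\cap I(2,5)\cap I(3,6)=\emptyset$, which is where $k\ge 6$ enters), and then plays several barbells through $z_2$ and that pair's chord to force the final contradiction. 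The chord information is essential and your two gadgets alone do not substitute for it.
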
 
\begin{proof}
(1)\ \ {}
We prove that for every integer $t$ with $t\ge\frac{1}{2}k(k+1)+4$,
every rainbow $B$-free edge-colored complete graph
$(K, c)$ colored in $t$ colors is rainbow $C_k$-free.
Assume,
to the contrary,
that there exist an integer $t$ with $t\ge \frac{k(k+1)}{2}+4$
and a $B$-free edge-colored complete graph $(K, c)$ colored in $t$ colors
such that $(K, c)$ contains a rainbow cycle $C=x_1x_2\dots x_kx_1$ of order $k$.
We may assume that $c\colon E(G)\to [1, t]$.
We may also assume that $c(x_ix_{i+1})=i$
for $1\le i\le k$,
where indices are taken modulo $k$ in $[1, k]$.
Also,
since $|E(K[V(C)])|=\frac{1}{2}k(k-1)$,
we may assume $c(V(K[V(C)]))\subset \left[1, \frac{1}{2}k(k-1)\right]$.
\par
For indices $i$ and $j$,
we call the pair $(i, j)$
a \textit{distant pair\/}
if both $l(x_i\Cforw x_j)\ge 3$ and
$l(x_j\Cforw x_i)\ge 3$ hold.
Also,
for a distant pair $(i, j)$,
we define $I(i, j)$ by
$I(i, j)=\{i-1, i, j-1, j\}$.
Note that an index $i$ also represents a color of the edge $x_ix_{i+1}$.
We remark the following.
\begin{enumerate}
\item[(i)]
If $(i, j)$ is a distant pair,
then $K[x_ix_{i-1}, x_ix_{i+1}, x_ix_j, x_jx_{j-1}, x_jx_{j+1}]$
is a barbell edge-colored in $I(i, j)\cup\{c(x_ix_j)\}$.
Since $(K, c)$ is rainbow $B$-free,
this implies $c(x_ix_j)\in I(i, j)$.
Then we can define $p(i, j)$ and $q(i, j)$
by $\{p(i, j), q(i, j)\}=\{i, j\}$,
$c(x_ix_j)\in \{p(i, j)-1, p(i, j)\}$
and $c(x_ix_j)\notin \{q(i, j)-1, q(i, j)\}$.
\item[(ii)]
A distant pair is defined based on the distance between two vertices in $C$.
Therefore,
if two vertices in $C$ form a distant pair for some labeling of $x_i$,
then they form a distant pair even if we reverse the order of indices.
\end{enumerate}
\begin{claim}\label{few_edges}
For each $i$ with $1\le i\le k$,
at most one color in $\left[\frac{1}{2}k(k-1)+1, t\right]$
appears in the edges joining $x_i$ and $V(K)-V(C)$.
\end{claim}
\claimproof\ \ {}
Assume,
to the contrary,
there exist two edges $x_iu$ and $x_iv$ with $\{u, v\}\subset V(K)-V(C)$
with $\{c(x_iu), c(x_iv)\}\subset \left[\frac{1}{2}k(k-1)+1, t\right]$
and $c(x_iu)\ne c(x_iv)$.
Let $c(x_iu)=c_u$ and $c(x_iv)=c_v$.
Since $K[x_iu, x_iv, x_ix_{i+1} x_{i+1}x_{i+2}, x_{i+1}x_{i-1}]$
is a barbell edge-colored in $\{c_u, c_v, i, i+1, c(x_{i+1}x_{i-1})\}$ and
$c(x_{i-1}x_{i+1})\in \left[1, \frac{1}{2}k(k-1)\right]$,
we have $c(x_{i-1}x_{i+1})\in \{i, i+1\}$.
However,
$K[x_iu, x_iv, x_ix_{i-1}, x_{i-1}x_{i-2}, x_{i-1}x_{i+1}]$
is also a barbell and it is edge-colored in
$\{c_u, c_v, i-1, i-2, c(x_{i-1}x_{i+1})\}$.
Since $c(x_{i-1}x_{i+1})\in\{i, i+1\}$
and $k\ge 6$,
this subgraph is rainbow,
which contradicts the assumption that $(K, c)$ is rainbow $B$-free.
\qed
\par
By Claim~\ref{few_edges},
The edges in $E(K[V(C)])\cup E_K(V(C), V(K)-V(C))$
are colored in at most $\frac{1}{2}k(k-1)+k=\frac{1}{2}k(k+1)$ colors.
Without loss of generality,
we may assume $c(E(K[V(C)])\cup E_K(V(C), V(K)-V(C)))\subset\left[1, \frac{1}{2}k(k+1)\right]$.
Since $K$ is edge-colored in $\frac{k(k+1)}{2}+t$ colors,
we see that colors in $\left[\frac{1}{2}k(k+1)+1, \frac{1}{2}k(k+1)+t\right]$
only appear in the edges of $K[V(K)-V(C)]$.
\par
Now re-color the edges in $K[V(K)-V(C)]$ receiving a color in
$\left[1, \frac{1}{2}k(k+1)\right]$ in a new color $t+1$.
Let $c'$ be the resulting color of $K-V(C)$.
Then whether the color $t+1$ arises in $c'$ or not,
$(K-V(C), c')$ uses at least $t\ge 4$ colors,
and hence it contains a rainbow path of order~$4$
by Theorem~\ref{tw}.
This implies that in $(K, c)$,
$K-V(C)$ contains a rainbow path $P=z_1z_2z_3z_4$
which contains at most one edge
having a color in $\left[1, \frac{1}{2}k(k+1)\right]$.
Let $c(z_iz_{i+1})=b_i$ for $i\in\{1, 2, 3\}$.
By symmetry between $z_1z_2$ and $z_3z_4$,
we may assume $b_1\notin \left[1, \frac{1}{2}k(k+1)\right]$.
\begin{claim}\label{preferred_pair}
There exists a distant pair $(i, j)$ with $b_2\notin I(i, j)$.
\end{claim}
\claimproof\ \ {}
Since $k\ge 6$,
$(1, 4)$,
$(2, 5)$,
$(3, 6)$ are all distant pairs,
and $I(1, 4)=\{k, 1, 3, 4\}$,
$I(2, 5)=\{1, 2, 4, 5\}$
and $I(3, 6)=\{2, 3, 5, 6\}$.
Since $k\ge 6$,
$k, 1, 2, 3, 4, 5, 6$ are distinct indices
except for the case of $k=6$.
However,
even if $k=6$,
$6$ does not appear in $I(2, 5)$.
Thus,
$I(1, 4)\cap I(2, 5)\cap I(3, 6)=\emptyset$,
and hence whatever value $b_2$ takes,
one of $I(1,4)$,
$I(2,5)$ and $I(3,6)$ excludes $b_2$.
\qed
\par
By Claim~\ref{preferred_pair},
we take a distant pair $(i, j)$
with $b_2\notin I(i, j)$.
Let $p=p(i, j)$ and $q=q(i, j)$.
Then $x_ix_j=x_px_q$,
$c(x_px_q)\in\{p-1, p\}$,
$c(x_px_q)\notin \{q, q-1\}$
and $b_2\notin I(p, q)=\{p-1, p, q-1, q\}$.
Moreover,
by the remark~(ii),
we may assume $c(x_px_q)=p$
by reversing the order of indices in $x_i$ if necessary,
\par
Note $K[z_2z_1, z_2z_3, z_2x_p, x_px_{p-1}, x_px_{p+1}]$
is a barbell edge-colored in $\{b_1, b_2, c(z_2x_p), p-1, p\}$.
Then since $b_2, p-1, p$ are distinct colors
and $b_1\notin \left[1, \frac{1}{2}k(k+1)\right]$,
we have $c(z_2x_p)\in\{b_2, p-1, p\}$.
However,
if $c(z_2x_p)=b_2$,
then $K[x_pz_2, x_px_{p-1}x_px_q, x_qx_{q-1}, x_qx_{q+1}]$
is a rainbow barbell edge-colored in
$\{b_2, p-1, p, q-1, q\}$.
a contradiction.
Hence we have $c(z_2x_p)\in \{p-1, p\}$.
On the other hand,
$K[z_2z_1, z_2z_3, z_2x_q, x_qx_{q-1}, x_qx_{q+1}]$,
$K[z_2z_1, z_2z_3, z_2x_q, x_qx_{q-1}, x_qx_p]$
and
$K[z_2z_1, z_2z_3, z_2x_q, x_qx_{q+1}, x_qx_p]$
are barbells,
and they are edge-colored in
$\{b_1, b_2, c(z_2x_q), q-1, q\}$,
$\{b_1, b_2, c(z_2x_q), q-1, p\}$ and
$\{b_1, b_2, c(z_2x_q), p, q\}$,
respectively.
Since none of them is rainbow,
we have $c(z_2x_q)=b_2$.
However,
then $K[z_2z_1, z_2x_p, z_2x_q, x_qx_{q-1}, x_qx_{q+1}]$
is a rainbow barbell
edge-colored in $\{b_1, c(z_2x_p), b_2, q-1, q\}$,
whether $c(z_2x_p)=p$ or $c(z_2x_p)=p-1$.
This is a final contradiction,
and (1) follows.
\par\noindent
(2)\ \ {}
Assume $B\le C_k$.
Then there exists a positive integer $t$ such that
every rainbow $B$-free edge-colored complete graph
colored in $t$ or more colors is rainbow $C_k$-free.
Take a set of $2t-k$ vertices
$X=\{x_1,\dots, x_k\}\cup\{y_{k+1},\dots, y_t, z_{k+1},\dots z_t\}$
and let $K=K[X]$.
Define $c\colon E(K)\to\naturalnumbers$ by
\[
c(e)=
\begin{cases}
i & \text{if $e=x_ix_{i+1}$, $1\le i\le k$}\\
j & \text{if $e=y_jz_j$, $k+1\le j\le t$}\\
1 & \text{otherwise,}
\end{cases}
\]
where we consider $x_{k+1}=x_1$.
Then $(K, c)$ is edge-colored in $t$ colors
and $C=x_1x_2\dots x_kx_1$ is a rainbow cycle of order $k$.
\par
We claim that $(K, c)$ is rainbow $B$-free.
Assume,
to the contrary,
that $(K, c)$ contains a rainbow subgraph $T$ which is isomorphic to
the barbell $B$.
Let $u$ and $v$ be the two adjacent vertices that correspond to
the vertices of degree~$3$ of $B$.
Then both $u$ and $v$ have color degree~$3$ in $(K, c)$.
On the other hand,
by the definition of $c$,
$\Delta_c(K, c)=3$ and
only the vertices $x_i$ with $3\le i\le k$ have color degree~$3$.
Moreover,
the color~$1$ commonly appears in the edges incident with $u$ and incident with $v$.
This is possible only if $c(u,v)=1$.
Then $T-uv$ consists of two vertex-disjoint paths of order~$3$ contained in $C$.
However,
since $k\le 5$,
this is impossible.
Therefore,
(2) follows.   
\end{proof}
\subsection{Tree vs Tree}
There are a broad range of combinations
in the relations among trees,
and we currently do not know much.
In $\HH$,
$K_{1,3}$ is the smallest tree and
its order is $4$.
The trees of order~$5$ is $P_5$,
$K^+_{1,3}$ and $K_{1,4}$.
We study these trees in detail in the next section.
Among trees of order~$6$,
we study the relations among $S_{2, 2, 1}$,
$S_{3, 1, 1}$ and $P_6$.
Actually,
they are pairwise incomparable.
Since $S_{2, 2, 1}\le C_4$ but
$S_{3, 1, 1}\not\le C_4$ and
$P_6\not\le C_4$,
we immediately have
$S_{3, 1, 1}\not\le S_{2,2,1}$ and
$P_6\not\le S_{2, 2, 1}$.
\begin{theorem}
\leavevmode
\vspace{-1ex}
\begin{enumerate}
\item
$S_{2, 2, 1}\not\le S_{3, 1, 1}$
\item
$P_6\not\le S_{3,1,1}$
\item
$S_{3, 1, 1}\not\le P_6$
\item
$S_{2,2,1}\not\le P_6$
\end{enumerate}
\label{P6C4S311S221}
\end{theorem}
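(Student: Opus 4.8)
The plan is to prove each of the four non-relations by the device already used in Lemma~\ref{domain} and in the proof of Theorem~\ref{invariants}: to establish $H_1\not\le H_2$ it suffices to construct, for every positive integer $t$, a rainbow $H_1$-free edge-colored complete graph colored in at least $t$ colors that nevertheless contains a rainbow $H_2$. For a fixed $t$ I will place a rainbow copy of $H_2$ on a small \emph{core} vertex set, append gadget edges $y_jz_j$ with $c(y_jz_j)=j$ (as in Lemma~\ref{domain}) to push the number of colors up to $t$, and color \emph{every} remaining edge with a single \emph{background} color chosen to equal the color of one carefully selected core edge. The whole point is the choice of that reused edge: since a rainbow subgraph uses at most one background edge, the \emph{special} subgraph formed by the non-background edges (the core minus the reused edge, together with the gadget matching) controls which rainbow subgraphs can occur, and every gadget vertex has color degree $2$ and so can never be a branch.

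For the three parts forbidding a spider, (1), (3) and (4), I will use that the branch of a rainbow $S_{2,2,1}$ or $S_{3,1,1}$ has color degree at least $3$, hence meets two special edges and exactly one background edge. In part~(1) (contain $S_{3,1,1}$, forbid $S_{2,2,1}$), with core branch $u$, long leg $ua_1a_2a_3$ and pendants $ub_1,ub_2$, I reuse the first edge $ua_1$, so the special graph splits into the two paths $a_1a_2a_3$ and $b_1ub_2$; at each color-degree-$3$ vertex ($u$ and $a_2$) the two genuine legs end at leaves and only the background leg can lengthen, so the two length-$2$ legs required by $S_{2,2,1}$ cannot coexist, while the core itself is a rainbow $S_{3,1,1}$. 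In part~(4) (contain $P_6$ on $v_1,\dots,v_6$, forbid $S_{2,2,1}$) I reuse the middle edge $v_3v_4$, leaving the two special paths $v_1v_2v_3$ and $v_4v_5v_6$, with the same effect. In part~(3) (contain $P_6$, forbid $S_{3,1,1}$) I instead reuse the second edge $v_2v_3$, so the special graph is the isolated edge $v_1v_2$ together with the path $v_3v_4v_5v_6$; the only color-degree-$3$ vertices are $v_4$ and $v_5$, lying in the \emph{interior} of this $P_4$, and I will check that no rainbow leg of length $3$ can be grown from them, ruling out a rainbow $S_{3,1,1}$.

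Part~(2) (contain $S_{3,1,1}$, forbid $P_6$) is the most delicate, since $S_{3,1,1}$ already contains a rainbow $P_5$ and I must stop the colouring one edge short of a rainbow $P_6$. Here I will reuse the \emph{middle} edge $a_1a_2$ of the long leg, so that the special graph is the star at $u$ with edges $ua_1,ub_1,ub_2$ together with the single isolated edge $a_2a_3$. Because a rainbow path uses at most one background edge, deleting it leaves at most two special subpaths, each lying in one special component; the star contributes a subpath of at most two edges and every other component at most one, so any rainbow path has at most $2+1+1=4$ edges, i.e.\ it is at most $P_5$. Thus the graph is rainbow $P_6$-free while its core is a rainbow $S_{3,1,1}$.

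I expect the main obstacle to be the verification, in parts~(1), (3) and (4), that the single background edge cannot be used to \emph{jump} from a color-degree-$3$ vertex into the other special component and then ride a long special path; it is exactly this phenomenon that forces a different reused edge in each case (reusing a middle edge of $P_6$ forbids $S_{2,2,1}$ but not $S_{3,1,1}$, whereas reusing its second edge forbids $S_{3,1,1}$ but not $S_{2,2,1}$). Once the reused edge is fixed, the remaining work is a finite check over the at most two color-degree-$3$ vertices, or, for part~(2), the short rainbow-path count above; notably Theorem~\ref{tw} is not needed, since these constructions keep all maximal rainbow structures small by hand.
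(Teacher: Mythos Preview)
Your proposal is correct and follows the same overall strategy as the paper: for each part you build, for arbitrarily many colours, an edge-coloured complete graph containing a rainbow copy of $H_2$ but no rainbow copy of $H_1$, exploiting the fact that any rainbow subgraph can use at most one edge of the repeated background colour. Part~(3) of your plan (reuse $v_2v_3$ on a $P_6$-core plus a gadget matching) is literally the paper's construction. In parts~(1), (2) and~(4) the paper uses different ad~hoc colourings --- a large rainbow star centred at one vertex together with one or two extra edges --- which has the advantage that there is only a single vertex of colour degree~$\ge 3$, so the branch-vertex check is over one case rather than two. Your uniform ``core $+$ gadget matching $+$ reuse one core edge'' template is tidier conceptually and makes the role of the reused edge transparent (your remark that swapping which $P_6$-edge is reused toggles between forbidding $S_{2,2,1}$ and forbidding $S_{3,1,1}$ is a nice structural explanation absent from the paper), at the modest cost of having two colour-degree-$3$ vertices to examine in parts~(1) and~(4). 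Both approaches go through with equally short case checks.
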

\begin{proof}
(1)\ \ {}
Assume $S_{2, 2, 1}\le S_{3, 1, 1}$.
Then there exists a positive integer $t$ such that
every rainbow $S_{2, 2, 1}$-free edge-colored complete graph
colored in $t$ or more colors is rainbow $S_{3, 1, 1}$-free.
We may assume $t\ge 4$.
\par
Take $t+1$ vertices $x_0, x_1,\dots x_{t-2}, y, z$
and let $K=K[\{x_0, x_1,\dots, x_{t-2}, y, z\}]$.
Define $c\colon E(K)\to \naturalnumbers$ by
\[
c(e) = 
\begin{cases} 
i & \text{if $e=x_0x_i$, $1\le i\le t-2$}\\
t-1 & \text{if $e=yz$}\\
t & \text{otherwise.}
\end{cases}
\]
Then $(K, c)$ is colored in $t$ colors and
$K[x_0x_1, x_0x_2, x_0x_3, x_3y, yz]$ is a tree isomorphic to $S_{3, 1, 1}$.
Moreover it is colored in $\{1, 2, 3, t-1, t\}$ and hence it is rainbow.
On the other hand,
$x_0$ is the only vertex of color degree at least $3$ in $K$,
and every path of length~$2$ starting at $x_0$
contains an edge which is colored in $t$.
This implies that $(K, c)$ does not contain a rainbow spider
which has two legs of length~$2$,
and hence $K$ is rainbow $S_{2, 2, 1}$-free.
This is a contradiction.
\par\noindent
(2)\ \ {}
Assume $P_6\le S_{3,1,1}$.
Then there exists a positive integer $t$
such that every rainbow $P_6$-free
edge-colored complete graph colored in $t$ or more colors is rainbow $S_{3,1,1}$-free.
We may assume $t\ge 5$.
\par
Take a set of $t$ vertices $X=\{x_0, x_1,\dots, x_{t-2}, y\}$
and let $K=K[X]$.
Define $c\colon E(K)\to\naturalnumbers$ by
\[
c(e) = 
\begin{cases}
i & \text{if $e=x_0x_i$, $1\le i\le t-2$}\\
t-1 & \text{if $e=x_{t-2}y$}\\
t & \text{otherwise.}    
\end{cases}
\]
Then $(K, c)$ is edge-colored in $t$ colors.
Every path can contain at most $2$ edges incident with $x_0$
and all the edges not incident with $x_0$ are colored in $t-1$ or $t$.
This implies that every rainbow path in $(K, c)$ has length at most $4$,
and hence $(K, c)$ is rainbow $P_6$-free.
On the other hand,
$K[x_0x_{t-2}, x_{t-2}y, yx_3, x_0x_1, x_0x_2]$ is a rainbow subgraph
isomorphic to $S_{3, 1, 1}$ colored in $\{1, 2, t-2, t-1, t\}$.
This is a contradiction.
\par\noindent
(3)\ \ {}
Assume $S_{3,1,1}\le P_6$.
Then there exists a positive integer $t$ such
that every rainbow $S_{3,1,1}$-free edge-colored complete graph
colored in $t$ or more colors is
rainbow $P_6$-free.
We may assume $t\ge 5$.
\par
Take a set of $6+2(t-5)$ vertices
$X=\{x_1, x_2, x_3, x_4, x_5, x_6, y_6, \dots y_t, z_6, \dots, z_t\}$
and let $K=K[X]$.
Define $c\colon E(K)\to\naturalnumbers$ by
\[
c(e) = 
\begin{cases}
i & \text{if $e=x_ix_{i+1}$, $1\le i\le 5$}\\
j & \text{if $e=y_jz_j$, $6\le j\le t$}\\
2 & \text{otherwise.}
\end{cases}
\]
Then $(K, c)$ is colored in $t$ colors.
Assume $(K, c)$ contains a rainbow tree $T$ isomorphic to $S_{3,1,1}$.
Let $u$ be the unique vertex of degree~$3$ in $T$.
Since $\Delta_c(K, c)=3$ and
$x_4$ and $x_5$ are the only vertices of color degree~$3$ in $K$,
we have $u\in\{x_4, x_5\}$.
Moreover,
one of the edges incident with $u$ in $T$ is colored in~$2$.
Since $T$ is rainbow,
this implies $x_2x_3\notin E(T)$.
Therefore,
no leg of $T$ can contain three edges,
and hence $T$ cannot be isomorphic to $S_{3,1,1}$.
Therefore,
$(K, c)$ is rainbow $S_{3,1,1}$-free.
On the other hand,
$x_1x_2x_3x_4x_5x_6$ is a rainbow path of order~$6$ in
$(K, c)$.
This is a contradiction.
\par\noindent
(4)\ \ {}
Assume $S_{2,2,1}\le P_6$.
Then there exists a positive integer $t$
such that every rainbow $S_{2,2,1}$-free edge-colored complete graph
colored in $t$ or more colors is rainbow $P_6$-free.
We may assume $t\ge 5$.
\par
Take a set of $t+1$ vertices $X=\{x_1, x_2, x_3, x_4, x_5, x_6, y_6, y_7,\dots, y_t\}$
and let $K=K[X]$.
Define $c\colon E(K)\to\naturalnumbers$ by
\[
c(e) = 
\begin{cases}
i & \text{if $e=x_ix_{i+1}$, $1\le i\le 5$}\\
j & \text{if $e=x_2y_j$, $6\le j\le t$}\\
3 & \text{otherwise.}
\end{cases}
\]
Then $(K, c)$ is colored in $t$ colors.
\par
If we remove all the edges colored in~$3$ in $(K, c)$,
the resulting graph consists of two components
isomorphic to $K_{1, t-3}$ and $P_3$.
Since both are stars,
it is impossible to obtain a spider having two legs of length~$2$
by adding just one edge.
This implies that no rainbow subgraph
of $(K, c)$ can be isomorphic to $S_{2, 2, 1}$,
whether it contains an edge colored in $3$ or not.
Therefore,
$(K, c)$ is rainbow $S_{2,2,1}$-free.
On the other hand $x_1x_2x_3x_4x_5x_6$ is a rainbow path of order~$6$ in $(K, c)$.
This is a contradiction.
\end{proof}
\section{Structure of $\boldsymbol{(\qHH, \le\,)}$}
As we mentioned in Section~2,
for $H_1, H_2\in\HH$,
we write $H_1\equiv H_2$ if
both $H_1\le H_2$ and $H_2\le H_1$ hold.
Moreover,
for two equivalence classes $\HH_1$ and $\HH_2$
with respect to $\equiv$,
we write $\HH_1\le \HH_2$
if $H_1\le H_2$ holds
for some $H_1\in\HH_1$ and $H_2\in\HH_2$.
Then $(\qHH, \le\,)$ is a partially ordered set.
In this section,
we investigate its structure.
First,
we prove that
it has the minimum element.
\begin{theorem}\label{minimun_elements}
\ \ {}
\begin{enumerate}
\item
For each $H\in\HH$,
$K_{1,3}\le H$ holds.
\item
If $H\in\HH$ and $H\le K_{1,3}$,
then either $H=K_{1,3}$ or $K_{1,3}^+$.
In particular,
$\{K_{1,3}, K^+_{1,3}\}$ is the minimum element of $(\qHH, \le\,)$.
\end{enumerate}
\end{theorem}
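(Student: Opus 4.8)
The statement splits into a structural claim (1) about rainbow $K_{1,3}$-free colorings and a finiteness-plus-elimination claim (2); I would assume Theorems~\ref{clms}, \ref{tw}, \ref{invariants} and~\ref{dim_tree} throughout. The plan for (1) is to first understand exactly what a rainbow $K_{1,3}$-free edge-colored complete graph $(K,c)$ looks like. The key opening remark is that being rainbow $K_{1,3}$-free is the same as $\Delta_c\le 2$: no vertex is incident with edges of three distinct colors. Hence every connected rainbow subgraph has maximum degree at most $2$, so it is a path or a cycle. This immediately settles every $H\in\HH$ with $\Delta(H)\ge 3$: such an $H$ can never occur as a rainbow subgraph, so $K_{1,3}\le H$ holds (with threshold $t=1$). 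For paths I would argue directly that no rainbow $P_5$ exists: if $v_0v_1v_2v_3v_4$ were rainbow with edge-colors $c_1,\dots,c_4$, then the color-sets at the internal vertices are forced to be $\{c_1,c_2\}$ at $v_1$ and $\{c_3,c_4\}$ at $v_3$, which are disjoint, leaving no admissible color for the chord $v_1v_3$. Thus every $P_k$ with $k\ge5$, and (since $C_k\supseteq P_5$ for $k\ge5$) every cycle $C_k$ with $k\ge5$, never appears rainbow.

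\textbf{The short cycles.} What genuinely uses the hypothesis ``sufficiently many colors'' is ruling out rainbow $C_3$ and $C_4$. Here I would exploit that, writing $S(v)$ for the set of colors on the edges at $v$, the family $\{S(v)\}_v$ consists of sets of size at most $2$ that pairwise intersect (because $c(uv)\in S(u)\cap S(v)$). A pairwise-intersecting family of $\le 2$-element sets is either a \emph{star} (a common color lies in every $S(v)$) or a \emph{triangle} (all sets drawn from a fixed $3$-element color set). Since a triangle uses only three colors, once $(K,c)$ has at least $4$ colors we are in the star case: some color $a$ lies in $S(v)$ for all $v$. In this regime two consecutive edges of a rainbow cycle cannot both avoid $a$ (that would put three colors in one $S(v)$), so the non-$a$ edges form a matching along the cycle; but a rainbow cycle of length $\ell$ uses $a$ at most once, forcing at least $\ell-1$ non-$a$ edges, which is impossible for $\ell\ge3$. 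Hence with $\ge 4$ colors there is no rainbow cycle at all, so $K_{1,3}\le C_k$ for every $k\ge3$. Taking $t=4$ uniformly then proves (1) for all $H\in\HH$.

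\textbf{Part (2) and the minimum element.} For (2) the plan is to cut down to finitely many candidates with the Section-3 invariants and then eliminate survivors by hand. Since $K_{1,3}$ is a tree in $\HH$, Theorem~\ref{dim_tree} forces any $H\le K_{1,3}$ to be a tree, and Theorem~\ref{invariants}(1) gives $|E(H)|\le 4$, hence $|V(H)|\le 5$; together with $H\in\HH$ this leaves only $K_{1,3}$ (order $4$) and the order-$5$ trees $P_5$, $K_{1,4}$, $K_{1,3}^+$. Next I would show $P_5\not\le K_{1,3}$ and $K_{1,4}\not\le K_{1,3}$ by Lemma~\ref{domain}-style gadgets: for every $t$, a rainbow $K_{1,4}$-free complete graph keeping $\Delta_c=3$ (a rainbow $K_{1,3}$ at a vertex $x_0$, a rainbow matching $y_jz_j$ of colors $j$, all other edges colored $1$) and a rainbow $P_5$-free complete graph (a rainbow star at $x_0$ with every edge off $x_0$ receiving one fixed color, so any rainbow path meets $x_0$ in at most two edges and repeats the fixed color), each still containing a rainbow $K_{1,3}$. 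Since $K_{1,3}\le K_{1,3}$ trivially and $K_{1,3}^+\le K_{1,3}$ by Theorem~\ref{clms}, exactly $K_{1,3}$ and $K_{1,3}^+$ satisfy $H\le K_{1,3}$. Finally, part~(1) gives $K_{1,3}\le H$ for every $H$ while $K_{1,3}\equiv K_{1,3}^+$, so the class $\{K_{1,3},K_{1,3}^+\}$ lies below every class and is the minimum of $(\qHH,\le)$.

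\textbf{Main obstacle.} The only substantive step is the classification of rainbow $K_{1,3}$-free colorings with at least four colors via the star/triangle dichotomy and its use to annihilate all rainbow cycles; the $P_5$ bound and the two explicit constructions in part~(2) are routine once that structure is in hand.
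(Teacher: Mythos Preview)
Your proof is correct, and the $P_5$ chord argument and the two gadgets in part~(2) match what the paper does (your $P_5$ gadget is exactly Lemma~\ref{P_5_not}). The genuine difference is how you handle cycles in part~(1). The paper reduces $C_k$ for $k\ge 4$ to $C_3$ by quoting Corollary~\ref{C3Ck} (which in turn rests on Theorem~\ref{Z1}, proved separately in Section~4), and then treats $C_3$ by ad hoc casework on the edges between $V(C)$ and the rest. Your star/triangle dichotomy for the pairwise-intersecting family $\{S(v)\}$ is more structural and fully self-contained: once $\ge 4$ colors force the star case, the matching argument annihilates every rainbow cycle simultaneously, so you never need Section~4. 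The cost is one short set-system paragraph; the gain is independence from Corollary~\ref{C3Ck} and a cleaner picture of what a rainbow $K_{1,3}$-free coloring actually looks like. For part~(2), the paper instead case-splits on $\subseteq$-comparability with $K_{1,3}$ (invoking Theorem~\ref{clms} for supergraphs and reducing the incomparable case to paths via $\Delta(H)=2$), whereas you bound $|V(H)|\le 5$ and enumerate; both routes arrive at the same short list and the same eliminations.
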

Before proving this theorem,
we prove one lemma.
For an integer $k$ with $k\ge 2$,
let $K_{1,k}+e$ be the graph
obtained from $K_{1,k}$ by adding one edge
between a pair of its endvertices.
Note $K_{1,2}+e=C_3$
and $K_{1,3}+e=Z_1$.
\begin{lemma}
For every integer $k$ with $k\ge 2$,
$P_5\not\le K_{1,k}+e$.
In particular,
$P_5\not\le K_{1,k}$.
\label{P_5_not}
\end{lemma}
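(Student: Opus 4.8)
The plan is to argue directly from the definition of $\le$. To establish $P_5\not\le K_{1,k}+e$, I must produce, for every positive integer $t$, an edge-colored complete graph colored in at least $t$ colors that is rainbow $P_5$-free but contains a rainbow copy of $K_{1,k}+e$. Since arbitrarily large $t$ may be used, it suffices to treat $t\ge k+1$. For such a $t$ I would use a star-type coloring analogous to those in Section~4: take $K$ on vertices $x_0,x_1,\dots,x_{t-1}$ and set $c(x_0x_i)=i$ for $1\le i\le t-1$, while $c(e)=t$ for every edge $e$ not incident with $x_0$. This uses exactly $t$ colors.

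Two verifications then complete the argument. First, $(K,c)$ is rainbow $P_5$-free: any path meets $x_0$ at most once and hence contains at most two edges at $x_0$, while all of its remaining edges carry the single color $t$, so a rainbow path can use at most one of those; thus every rainbow path has at most three edges. Second, on $\{x_0,x_1,\dots,x_k\}$ the star edges $x_0x_1,\dots,x_0x_k$ together with the chord $x_1x_2$ form a copy of $K_{1,k}+e$ whose edges receive the colors $1,\dots,k,t$; since $k\le t-1$, these $k+1$ colors are pairwise distinct, so this copy is rainbow. Hence $(K,c)$ witnesses $P_5\not\le K_{1,k}+e$.

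The final clause is then immediate: since $K_{1,k}\subseteq K_{1,k}+e$ gives $K_{1,k}\le K_{1,k}+e$, transitivity of $\le$ forces $P_5\not\le K_{1,k}$ (equivalently, the star $x_0x_1,\dots,x_0x_k$ is already a rainbow $K_{1,k}$ in the same coloring). I expect no serious obstacle here; the only point needing care is that the fixed color $t$ on the chord must differ from the star colors, which is exactly what $k\le t-1$ guarantees. The whole design rests on the single observation that $K_{1,k}+e$ contains precisely one ``chord'' edge, and that one cheaply-colored edge is exactly what a rainbow path through $x_0$ can also afford, so the same coloring simultaneously forbids rainbow $P_5$ and admits rainbow $K_{1,k}+e$.
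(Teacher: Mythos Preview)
Your proposal is correct and essentially identical to the paper's proof: the same star-type coloring on $t$ vertices (with $t\ge k+1$) is used, the same two verifications establish that it is rainbow $P_5$-free yet contains a rainbow $K_{1,k}+e$, and the deduction of $P_5\not\le K_{1,k}$ via $K_{1,k}\subseteq K_{1,k}+e$ and transitivity is the same. The only cosmetic difference is that the paper phrases the argument by contradiction, whereas you argue directly from the negation of the definition.
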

\begin{proof}
Assume $P_5\le K_{1,k}+e$ for some integer $k$ with $k\ge 2$.
Then there exists a positive integer $t$ such that every rainbow $P_5$-free
complete graph $(K, c)$ edge-colored in $t$ or more colors
is rainbow $(K_{1,k}+e)$-free.
We may assume $t\ge k+1$.
\par
Take $t$ vertices $x_0, x_1,\dots, x_{t-1}$ and let $K=K[{\{x_0, x_1,\dots, x_{t-1}\}}]$.
Define $c\colon E(G)\to\{1, 2,\dots, t\}$ by
\[
c(e) =
\begin{cases}
i & \text{if $e=x_0x_i$, $1\le i\le t-1$,}\\
t & \text{otherwise.}
\end{cases}
\]
Then $(K, c)$ is colored in $t$ colors.
Also every rainbow path in $(K, c)$ can contain at most two edges incident with $x_0$ and
hence it can contain at most three edges altogether.
Therefore,
$(K, c)$ is rainbow $P_5$-free.
On the other hand,
$K[\{x_0x_1, x_0x_2,\dots, x_0x_k, x_1x_2\}]$
is a rainbow subgraph in $(K, c)$ isomorphic to $K_{1,k}+e$
colored in
$\{1, 2,\dots, k, t\}$.
This is a contradiction.
\par
Note $K_{1,k}\subseteq K_{1,k}+e$ and hence $K_{1,k}\le K_{1,k}+e$.
Therefore,
if $P_5\le K_{1,k}$,
the transitivity of $\le$ yields $P_5\le K_{1,k}+e$,
a contradiction.
Thus,
we have $P_5\not\le K_{1,k}$.
\end{proof}
\begin{proof}[\textbf{Proof of Theorem~\ref{minimun_elements}.}]
(1)\ \ {}
If $\Delta(H)\ge 3$,
then $K_{1,3}\subseteq H$,
which implies $K_{1,3}\le H$.
If $\Delta(H)\le 1$,
then $H$ is a subgraph of $P_4$,
which contradicts the definition of $\HH$.
Therefore,
we may assume $\Delta(H)=2$,
which means that
$H$ is either a path or a cycle.
Since $P_5\subseteq P_k$ and hence $P_5\le P_k$ for $k\ge 5$,
and $C_3\le C_k$ for $k\ge 4$ by Corollary~\ref{C3Ck},
it suffices to prove $K_{1,3}\le P_5$ and $K_{1,3}\le C_3$.
\par
First,
we claim that every rainbow $K_{1,3}$-free edge-colored complete graph
$(K, c)$ is rainbow $P_5$-free.
Assume,
to the contrary,
that $(K, c)$ contains a rainbow path
$P=x_1x_2x_3x_4x_5$ of order~$5$.
We may assume $c(x_1x_2)=1$,
$c(x_2x_3)=2$,
$c(x_3x_4)=3$
and $c(x_4x_5)=4$.
Since $\{x_1x_2, x_2x_3, x_2x_4\}$ induces $K_{1,3}$,
we have $c(x_2x_4)\in\{1, 2\}$.
Then $\{x_3x_4, x_4x_5, x_2x_4\}$
induces a rainbow $K_{1,3}$.
This is a contradiction.
\par
Next,
we claim that every rainbow $K_{1,3}$-free edge-colored complete graph
$(K, c)$ colored in $4$ or more colors
is rainbow $C_3$-free.
Assume,
to the contrary,
that $(K, c)$ contains a rainbow cycle $C=x_1x_2x_3x_1$.
We may assume $c(x_1x_2)=1$,
$c(x_2x_3)=2$ and $c(x_1x_3)=3$.
Note that since $K$ is edge-colored in $4$ or more colors,
$V(K)-V(C)\ne\emptyset$.
If $E_G\bigl( V(C), V(K)-V(C)\bigr)$ contains an edge $e$ with $c(e)\notin \{1, 2, 3\}$,
then this edge and the two edges of $C$ incident with the endvertex of $e$
induce a rainbow $K_{1,3}$,
a contradiction.
Therefore,
every edge in $E_G\bigl(V(C), V(G)-V(C)\bigr)$ is colored in $1$,~$2$
or~$3$.
\par
Take an edge $f=uv$ with $c(f)\notin\{1, 2, 3\}$.
Then $\{u, v\}\cap V(C)=\emptyset$.
If two of the edges $ux_1$,
$ux_2$ and $ux_3$ receive different colors,
then these edges together with $f$
induce a rainbow $K_{1,3}$ with center $u$,
a contradiction.
Hence $ux_1$,
$ux_2$ and $ux_3$ all receive the same color.
We may assume $c(ux_1)=c(ux_2)=c(ux_3)=1$.
Then $\{x_1x_3, x_2x_3, x_2u\}$ induces a rainbow $K_{1,3}$
edge-colored in $\{1, 2, 3\}$.
This is a contradiction.
\smallbreak\noindent
(2)\ \ {}
Assume $H\le K_{1,3}$ and that $H$ is neither $K_{1,3}$ nor $K^+_{1,3}$.
If $H\subsetneq K_{1,3}$,
then $H$ is a subgraph of $P_4$,
which contradicts the hypothesis.
If $K_{1,3}\subseteq H$,
then either $H=K_{1,3}$ or $K^+_{1,3}$ by Theorem~\ref{clms}.
Therefore,
$H$ is neither a subgraph or a supergraph of $K_{1,3}$.
This implies that $\Delta(H)=2$.
Moreover,
since $K_{1,3}$ is a tree,
$H$ is also a tree by Theorem~\ref{dim_monotone}.
Therefore,
we see that $H$ is a path
of order at least~$5$.
However,
we have $P_5\not\le K_{1,3}$ by Lemma~\ref{P_5_not}
and hence $H\not\le K_{1,3}$.
This is a contradiction.
\end{proof}
Next,
we determine
the elements of $(\qHH, \le\,)$ immediately after the minimum element.
They are $\{P_5\}$,
$\{C_3\}$ and $\{K_{1,4}, K^+_{1,4}\}$.
Before stating this result,
we prove one lemma
concerning the relation between $P_5$ and the barbell.
\begin{lemma}\label{barbell}
$P_5\le B$
\end{lemma}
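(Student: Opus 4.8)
The plan is to show that every rainbow $P_5$-free edge-colored complete graph $(K,c)$ colored in sufficiently many colors is rainbow $B$-free. I will argue by contraposition: assuming $(K,c)$ contains a rainbow barbell $B$, I will exhibit a rainbow $P_5$. Recall that $B$ is the tree with degree sequence $(3,3,1,1,1,1)$; write its two adjacent degree-$3$ vertices as $u$ and $v$, let $uu_1,uu_2$ be the other two edges at $u$, and let $vv_1,vv_2$ be the other two edges at $v$. Since the barbell already contains the path $u_1 u u v\ldots$ wait — more usefully, $B$ itself contains the rainbow path $u_1 u v v_1$ of order $4$, and we need order $5$. So the real content is: given a rainbow barbell, produce one more edge extending one of its natural rainbow $P_4$'s into a rainbow $P_5$.

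First I would observe that a rainbow barbell on the six vertices $u,v,u_1,u_2,v_1,v_2$ supplies several rainbow paths of order $4$, for instance $u_1 u v v_1$ and $u_2 u v v_2$, using five of the six colors. To extend $u_1 u v v_1$ to a rainbow $P_5$ I may append an edge at either end, say a new edge $v_1 w$ at $v_1$ with $w\notin\{u_1,u,v\}$; the path $u_1 u v v_1 w$ is rainbow precisely when $c(v_1 w)$ avoids the four colors already used. Because the number of colors $t$ is large and the barbell uses only five colors, intuitively there should be plenty of freedom to find such an edge. The key technical step is therefore to show that one of the many candidate extensions at the leaf endpoints $u_1,u_2,v_1,v_2$ (appending an edge to a vertex outside the current path, or rerouting through a sixth/seventh vertex) must receive a color distinct from the four colors on the chosen $P_4$. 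I expect to handle this by a counting or pigeonhole argument: if every extension at $v_1$ were forbidden, the colors on all edges from $v_1$ to $V(K)\setminus\{u_1,u,v\}$ would be confined to a fixed small set, and doing this simultaneously at all four leaves forces a global color restriction that contradicts $(K,c)$ being colored in many colors.

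The main obstacle will be controlling the edges incident with the leaf vertices: appending an edge could create a short rainbow configuration but might reuse an existing color, and I must rule out the degenerate case where \emph{every} available neighbor $w$ gives a repeated color. I anticipate the cleanest route is to fix attention on two leaves, say $u_1$ and $v_1$ (the endpoints of the rainbow $P_4$ $u_1 u v v_1$), and study the edges from these two vertices into $V(K)\setminus V(B)$; since $V(K)\setminus V(B)$ is nonempty once $t$ exceeds the number of colors used on $K[V(B)]$ plus a constant, and since forbidding all extensions would pin down the colors of a linear number of edges at each leaf, Theorem~\ref{tw} (or a direct pigeonhole on the at least four available colors) should force a properly colored extension, yielding the desired rainbow $P_5$. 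Finally I would package the threshold $t$ explicitly — bounding it by $\binom{6}{2}$ for the edges inside $V(B)$ plus a small additive constant — so that the statement $P_5\le B$ holds with a concrete constant, completing the proof.
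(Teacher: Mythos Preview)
Your plan is not yet a proof, and the route you sketch is both more complicated than needed and has a real obstacle. The paper's argument is entirely \emph{local to the six vertices of the barbell}: label the degree-$3$ vertices $x_3,x_4$ with leaves $x_1,x_2$ at $x_3$ and $x_5,x_6$ at $x_4$, and colors $c(x_1x_3)=1$, $c(x_2x_3)=2$, $c(x_3x_4)=3$, $c(x_4x_5)=4$, $c(x_4x_6)=5$. The two $P_5$'s $x_1x_2x_3x_4x_5$ and $x_2x_1x_3x_4x_6$ force $c(x_1x_2)=3$; symmetrically $c(x_5x_6)=3$. Then the $P_5$'s $x_3x_2x_1x_4x_5$ and $x_3x_2x_1x_4x_6$ force $c(x_1x_4)\in\{2,3\}$, and either value yields an explicit rainbow $P_5$ (namely $x_3x_1x_4x_5x_6$ or $x_2x_3x_1x_4x_5$). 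No external vertex and no lower bound on the number of colors is used; the implication holds for every edge-colored complete graph.

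Your strategy of extending a leaf--$u$--$v$--leaf rainbow $P_4$ by an edge to an outside vertex $w$ can genuinely fail: nothing prevents every non-barbell edge incident with the four leaves from receiving the single bridge color $c(uv)$, in which case no such extension is rainbow, while arbitrarily many colors can still appear on edges of $K$ not touching $\{u_1,u_2,v_1,v_2\}$. So ``many colors'' does not by itself force a good edge at a leaf, and the pigeonhole you invoke does not bite where you need it. The missing idea is precisely to look at the \emph{chords among the six barbell vertices} (such as $x_1x_2$ and $x_1x_4$) rather than at edges to outside vertices; once you do that, the argument collapses to the short case analysis above.
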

\begin{proof}
We prove that every $P_5$-free edge-colored complete graph $(K, c)$ is rainbow $B$-free.
Assume $(K, c)$ contains a rainbow subgraph $G$ which
is isomorphic to $B$.
Let $V(G)=\{x_1, x_2, x_3, x_4, x_5, x_6\}$
and $E(G)=\{x_1x_3, x_2x_3, x_3x_4, x_4x_5, x_4x_6\}$.
We may also assume
$c(x_1x_3)=1$,
$c(x_2x_3)=2$,
$c(x_3x_4)=3$,
$c(x_4x_5)=4$ and $c(x_4x_6)=5$.
\par
Since $x_1x_2x_3x_4x_5$ is a $5$-path and $(K, c)$ is rainbow $P_5$-free,
$c(x_1x_2)\in\{2, 3, 4\}$.
Also,
by considering
$x_2x_1x_3x_4x_6$,
we have $c(x_1x_2)\in \{1, 3, 5\}$.
Hence  we have $c(x_1x_2)=3$.
By symmetry,
we also have $c(x_5x_6)=3$.
\par
Consider $x_3x_2x_1x_4x_5$ and $x_3x_2x_1x_4x_6$.
Then we have $c(x_1x_4)\in\{2, 3, 4\}$ and $c(x_1x_4)\in\{2, 3, 5\}$.
Thus,
we have $c(x_1x_4)\in \{2, 3\}$.
However,
if $c(x_1x_4)=2$,
then $x_3x_1x_4x_5x_6$ is a rainbow path colored in $\{1, 2, 3, 4\}$,
and if $c(x_1x_4)=3$,
then $x_2x_3x_1x_4x_5$ is a rainbow path colored in
$\{1, 2, 3, 4\}$.
Therefore,
we obtain a contradiction in both cases.
\end{proof}
Now we prove the following theorem.
\begin{theorem}
Let $H\in\HH$ and $H\notin \{K_{1, 3}, K^+_{1, 3}\}$.
Then
\begin{enumerate}
\item
$K_{1,4}\le H$,
$P_5\le H$ or $C_3\le H$.
Moreover,
\item
if $H\le P_5$,
then $H=P_5$,
\item
if $H\le C_3$,
then $H=C_3$,
and
\item
if $H\le K_{1,4}$,
then either $H=K_{1,4}$ or $H=K^+_{1,4}$.
\end{enumerate}
\label{next_minimum}
\end{theorem}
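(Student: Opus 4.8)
The engine behind all four parts is the same: use the invariant bounds of Theorems~\ref{invariants}, \ref{dim_tree} and~\ref{dim_monotone} to confine $H$ to a short finite list of candidates, and then confirm or discard each candidate using the comparabilities and incomparabilities collected in Section~4. The one fact that keeps recurring is that $P_5\le H$ holds for each of $P_6$, $S_{2,2,1}$, $S_{3,1,1}$ and $B$: for the first three because $P_5$ is a subgraph, and for the last by Lemma~\ref{barbell}. Since every $H\in\HH$ has $\Delta(H)\ge 2$, part~(1) splits on $\Delta(H)$. If $\Delta(H)\ge 4$ then $K_{1,4}\subseteq H$, so $K_{1,4}\le H$; if $\Delta(H)=2$ then $H$ is a path $P_k$ with $k\ge 5$ (so $P_5\le H$) or a cycle $C_k$ (so $C_3\le H$ by Corollary~\ref{C3Ck}, or $H=C_3$). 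When $\Delta(H)=3$, containment settles it whenever $C_3\subseteq H$ or $P_5\subseteq H$. The only genuine work is the leftover subcase where $H$ is triangle-free and $P_5$-free with $\Delta(H)=3$: a short structural argument shows such an $H$ is a tree (a cycle of length $\ge 5$ contains $P_5$; a chord of a $C_4$ creates a triangle; and a $C_4$ with any additional vertex forces a $P_5$, while $C_4$ itself has $\Delta=2$), hence a tree of diameter at most $3$ with maximum degree $3$, and the only such trees are $K_{1,3}$, $K_{1,3}^+$ and $B$. As $H\notin\{K_{1,3},K_{1,3}^+\}$ this gives $H=B$, and then $P_5\le B$ by Lemma~\ref{barbell}.

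For part~(2) I would first cut down $H$. Since $P_5$ is a tree, Theorem~\ref{dim_tree} forces $H$ to be a tree, Theorem~\ref{invariants}(1) gives $|E(H)|\le 5$ (hence $|V(H)|\le 6$), and Theorem~\ref{invariants}(3) gives $\Delta(H)\le 3$; the trees in $\HH$ meeting these constraints and distinct from $K_{1,3},K_{1,3}^+$ are exactly $P_5,P_6,S_{2,2,1},S_{3,1,1},B$. I then discard the four non-path candidates by composition: since $P_5\le C_4$ (Corollary~\ref{P5C4}), transitivity together with $P_6\not\le C_4$ (Theorem~\ref{Pk_Ck-2}), $S_{3,1,1}\not\le C_4$ (Theorem~\ref{S311_and_cycles}(1)) and $B\not\le C_4$ (Theorem~\ref{barbell_cycles}(2)) gives $P_6,S_{3,1,1},B\not\le P_5$; and since $P_5\le P_6$, the relation $S_{2,2,1}\not\le P_6$ (Theorem~\ref{P6C4S311S221}(4)) excludes $S_{2,2,1}$. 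Part~(3) is parallel: Theorem~\ref{dim_monotone} gives $\dim H\le\dim C_3=1$, and with $|E(H)|\le 4$ and $\Delta(H)\le 3$ the candidates are $P_5$ and the unicyclic graphs $C_3,C_4,Z_1$; here $P_5\not\le C_3$ (Lemma~\ref{P_5_not}), $C_4\not\le C_3$ (Theorem~\ref{C4Ck} with $k=3$) and $Z_1\not\le C_3$ (Theorem~\ref{clms}), leaving only $C_3$.

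For part~(4), Theorem~\ref{dim_tree} again makes $H$ a tree, with $|V(H)|\le 6$ and $\Delta(H)\le 5$, so the candidates are $P_5,K_{1,4}$ on five vertices and $P_6,K_{1,5},K_{1,4}^+,B,S_{2,2,1},S_{3,1,1}$ on six. Here $K_{1,4}^+\le K_{1,4}$ holds by Theorem~\ref{clms}, while $P_5\not\le K_{1,4}$ (Lemma~\ref{P_5_not}) combined with $P_5\le H$ for $H\in\{P_6,S_{2,2,1},S_{3,1,1},B\}$ eliminates all four of those by transitivity. This leaves the single candidate $K_{1,5}$, which is where I expect the main obstacle, since no earlier result rules out $K_{1,5}\le K_{1,4}$.

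To finish part~(4) I would construct directly a rainbow $K_{1,5}$-free edge-colored complete graph with arbitrarily many colors that still contains a rainbow $K_{1,4}$. On vertices $x_0,x_1,x_2,x_3,x_4$ together with pairs $y_j,z_j$ for $5\le j\le t$, color $x_0x_i$ with color $i$ for $1\le i\le 4$, color $y_jz_j$ with color $j$, and color every remaining edge~$1$. Then every vertex has color degree at most~$4$ (only $x_0$ attains $4$), so there is no rainbow $K_{1,5}$; the coloring uses $t$ colors; and the four edges $x_0x_1,x_0x_2,x_0x_3,x_0x_4$ form a rainbow $K_{1,4}$. Letting $t$ grow shows $K_{1,5}\not\le K_{1,4}$, so $H\in\{K_{1,4},K_{1,4}^+\}$ and the theorem follows. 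Apart from this construction, the only non-bookkeeping step is the triangle-free, $P_5$-free classification in part~(1), which I expect to be routine once the no-cycle argument above is in place.
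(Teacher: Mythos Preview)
Your argument is correct. Part~(1) is essentially the paper's proof, rephrased: both reduce to the case $\Delta(H)=3$, $C_3\not\subseteq H$, $P_5\not\subseteq H$, show $H$ is then a tree of diameter at most~$3$, and invoke Lemma~\ref{barbell}.

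For parts~(2)--(4) you take a genuinely different route. The paper handles each part by a single explicit coloring: for~(2) it colors a $P_5$ rainbow with all remaining edges in color~$2$ and argues that any rainbow connected subgraph is contained in $P_5$ or $K_{1,3}^+$; for~(3) and~(4) it uses the ``star coloring'' (all edges through $x_0$ distinct, all others one fixed color), reads off that rainbow trees are $K_{1,k}$ or $K_{1,k}^+$, and bounds $k$ via Theorem~\ref{invariants}(3) or Theorem~\ref{clms}. Your approach instead uses Theorems~\ref{invariants} and~\ref{dim_tree} to cut $H$ down to a finite list and then eliminates candidates by composing with the incomparabilities of Section~4 (e.g.\ $S_{3,1,1}\not\le C_4$ plus $P_5\le C_4$ gives $S_{3,1,1}\not\le P_5$). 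This is a legitimate and pleasant reuse of earlier work, at the cost of a longer case list; the paper's single-coloring arguments are shorter and more self-contained. Two small remarks: your appeal to Theorem~\ref{clms} for $Z_1\not\le C_3$ is technically outside its stated hypothesis $|H_2|\ge 4$ (the paper makes the same citation before Theorem~\ref{Z1}, so you are not alone), and your ad~hoc construction for $K_{1,5}\not\le K_{1,4}$ is unnecessary since Theorem~\ref{clms} already gives it (here $|K_{1,4}|=5\ge 4$ and $(K_{1,5},K_{1,4})\ne(K_{1,k}^+,K_{1,k})$) --- this is exactly how the paper disposes of the case $\Delta(H)\ge 4$ in~(4).
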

\begin{proof}
(1)\ \ {}
Assume,
to the contrary,
that $K_{1,4}\not\le H$,
$P_5\not\le H$ and $C_3\not\le H$.
Then $K_{1,4}\not\subseteq H$,
$P_5\not\subseteq H$ and
$C_3\not\subseteq H$.
\par
Since $K_{1,4}\not\subseteq H$,
$\Delta(H)\le 3$.
On the other hand,
if $\Delta(H)\le 2$,
then $H$ is either a path or a cycle.
However,
since $H\in\HH$ and $P_5\not\subseteq H$,
$H$ is not a path.
Moreover,
a cycle of order~$5$ or more contains $P_5$ as a subgraph,
while $P_5\le C_4$ by Corollary~\ref{P5C4} and $C_3\not\subseteq H$ forces $H\ne C_3$.
Therefore,
$H$ is not a cycle.
This is a contradiction.
Hence we have $\Delta(H)=3$.
\par
Let $v_1$ be a vertex of degree~$3$ in $H$.
Let $N_H(v_1)=\{v_2, v_3, v_4\}$.
Since $C_3\not\subseteq H$,
$\{v_2, v_3, v_4\}$ is an independent set.
Then since $H\ne K_{1,3}$
and $H$ is connected,
$H$ contains a vertex $v_5$ with $N_H(v_5)\cap\{v_2, v_3, v_4\}\ne\emptyset$.
By symmetry,
we may assume $v_2v_5\in E(H)$.
\par
If $N_H(v_5)\cap \{v_3, v_4\}\ne\emptyset$,
then $H$ contains $C_4$.
However,
this implies $C_4\le H$.
Since $P_5\le C_4$,
we have $P_5\le H$.
This contradicts the assumption.
Therefore,
we have $N_H(v_5)\cap\{v_3, v_4\}=\emptyset$.
\par
Since $H\ne K_{1,3}^+$,
$V(H)\ne\{v_1, v_2, v_3, v_4, v_5\}$.
Then since $H$ is connected,
$H$ contains a vertex $v_6$
which is adjacent to $x_2$,
$x_3$,
$x_4$ or $x_5$.
However,
if $v_6$ is adjacent to either $v_3$,
$v_4$ or $v_5$,
$H$ contains $P_5$,
a contradiction.
Therefore,
the only possibility is $v_2v_6\in E(G)$.
However,
in this case we have $B\subset H$ and hence $B\le H$.
Since $P_5\le B$ by Lemma~\ref{barbell},
we have $P_5\le H$,
a contradiction.
Therefore,
(1) follows.
\par\noindent
(2)\ \ {}
Assume $H\ne P_5$.
Then since $H\in \HH$,
$H$ is not a subgraph of $P_5$.
Also,
since $H\le P_5$,
$H$ is a tree
by Theorem~\ref{dim_monotone}.
Again since $H\le P_5$,
there exists a positive integer $t$ such that every rainbow $H$-free
edge-colored complete graph $(K, c)$ colored in $t$ or more colors
is rainbow $P_5$-free.
We may assume $t\ge 4$.
Introduce $5+2(t-4)$ vertices,
$x_1$,
$x_2$,
$x_3$,
$x_4$,
$x_5$,
$y_5,\dots, y_t$,
$z_5,\dots, z_t$.
Let $X=\{x_1, x_2, x_3, x_4, x_5\}$ and
$Y=\{y_5,\dots, y_t, z_5,\dots, z_t\}$,
and let $K=K[X\cup Y]$.
Define $c\colon E(G)\to\{1,\dots, t\}$ by
\[
c(e) = 
\begin{cases}
i & \text{if $e=x_ix_{i+1}$, $1\le i\le 4$,}\\
j & \text{if $e=y_jz_j$, $5\le j\le t$,}\\
2 & \text{otherwise.}    
\end{cases}
\]
Let $P=x_1x_2x_3x_4x_5$.
Then $(K, c)$ is edge-colored in $t$ colors and $P$ is a rainbow path of order~$5$.
Therefore,
$(K, c)$ contains a rainbow subgraph $G$ which is isomorphic to $H$.
Let $F=\bigl(E(K[X])-\{x_1x_2, x_2x_3, x_3x_4, x_4x_5\}\bigr)\cup E_K(X, Y)$.
Note that every edge in $F$ is colored in~$2$.
\par
Since a maximal connected rainbow subgraph of $G-X$ is isomorphic to $P_4$,
we have $V(G)\cap X\ne\emptyset$.
On the other hand,
$H$ is connected and it is not a subgraph of $P_5$,
$E(G)\cap F\ne\emptyset$.
Let $e\in E(G)\cap F$.
Then since $c(e)=2$ and $G$ is rainbow,
$x_2x_3\notin E(G)$ and
$E(G)\cap F=\{e\}$.
Suppose $e\in E_K(X, Y)$.
We may assume $e=uy_5$,
where $u\in X$.
Then $G$ is a subgraph of  $K[x_1x_2, uy_5, y_5z_5]$
or $K[x_3x_4, x_4x_5, uy_5, y_5z_5]$.
However,
in the former case,
$H$ is a subgraph of $P_4$,
and in the latter case,
$H$ is a subgraph of $P_5$ or $K_{1,3}^+$.
Hence we obtain a contradiction in either case.
\par
Next,
suppose $e\in E(K[X])$.
In this case,
$P-x_2x_3+e$ is isomorphic to either $P_5$ or $K_{1,3}^+$,
and $H$ is its subgraph.
This is again a contradiction.
\par\noindent
(3)\ \ {}
Assume $H\ne C_3$.
If $H$ contains a cycle of order~$3$,
then $C_3\subseteq H$.
However,
by Theorem~\ref{clms},
not both $C_3\subsetneq H$ and $H\le C_3$ occur simultaneously.
This is a contradiction.
Therefore,
$H$ does not contain a cycle of order~$3$.
\par
Since $H\le C_3$,
there exists a positive integer $t$ such that every rainbow $H$-free
edge-colored complete graph $(K, c)$ colored in $t$ or more colors
is rainbow $C_3$-free.
We may assume $t\ge 3$.
\par
Introduce a set of $t$ vertices $X=\{x_0, x_1,\dots x_{t-1}\}$ and let $K=K[X]$.
Define $c\colon E(K)\to\naturalnumbers$ by
\[
c(e) =
\begin{cases}
i & \text{if $e=x_0x_{i}$, $1\le i\le t-1$}\\
t & \text{otherwise.}
\end{cases}
\]
Then $(K, c)$ is edge-colored in $t$ colors
and $x_0x_1x_2x_0$ is a rainbow cycle of order~$3$
colored in $\{1, 2, t\}$.
Therefore,
by the assumption,
$(K, c)$ contains a rainbow subgraph $G$
which is isomorphic to $H$.
On the other hand,
in $(K, c)$,
every rainbow cycle has order~$3$.
This implies that $H$ does not contain a cycle and hence
it is a tree.
\par
Every rainbow tree in $(K, c)$ is isomorphic to
$K_{1, k}$ or $K^+_{1, k}$ for some $k\ge 2$.
On the other hand,
since $C_3$ is $2$-regular and $H\le C_3$,
we have $\Delta(H)\le 3$ by Theorem~\ref{invariants}~(4).
Therefore,
we have $H=K_{1,3}$ or $H=K^+_{1,3}$.
However,
this contradicts the hypothesis.
\par\noindent
(4)\ \ {}
Assume $H\le K_{1,4}$ and
$H\notin\{K_{1,3}, K^+_{3,1}, K_{1,4}, K^+_{1,4}\}$.
By Theorem~\ref{dim_monotone},
$H$ is a tree.
If $\Delta(H)\ge 4$,
then $K_{1,4}\subseteq H$.
However,
this yields $H\in\{K_{1,4}, K^+_{1,4}\}$ by Theorem~\ref{clms},
a contradiction.
Therefore,
we have $\Delta(H)\le 3$. 
\par
Since $H\le K_{1,4}$,
there exists a positive integer $t$ such that
every rainbow $H$-free edge-colored complete graph
colored in $t$ or more colors is rainbow $K_{1,4}$-free.
We may assume $t\ge 4$.
Take a set of $t+1$ vertices $X=\{x_0, x_1,\dots, x_t\}$ and let
$K=K[X]$.
Define $c\colon E(K)\to \naturalnumbers$ by
\[
c(e) = 
\begin{cases}
i & \text{if $e=x_0x_i$, $1\le i\le t$}\\
1 & \text{otherwise.}
\end{cases}
\]
Then $(K, c)$ is colored in $t$ colors and
$K[x_0x_1, x_0x_2, x_0x_3, x_0x_4]$ is
a rainbow star of order~$5$.
Therefore,
$(K, c)$ contains a rainbow subgraph $G$
which is isomorphic to $H$.
However,
since every rainbow tree in $(K, c)$
is isomorphic to $K_{1, k}$ or $K^+_{1, k}$ for some $k\ge 2$
and $\Delta(H)\le 3$,
we have $H\in \{K_{1,3}, K^+_{1,3}\}$.
This is a contradiction.
\end{proof}
We next investigate stars.
Since $K_{1,k}\subseteq K_{1,k+1}$,
we have $K_{1, k}\le K_{1, k+1}$.
Also,
Theorem~\ref{clms} says $K_{1,k}\equiv K^+_{1,k}$
and that $K^+_{1,k}$ is the only proper supergraph
of $K_{1,k}$ that is equivalent with $K_{1,k}$.
Here we prove that
even if we search the entire $\HH$,
which contains graphs not comparable with
$K_{1,k}$ with respect to $\subseteq$,
$K^+_{1,k}$ is still the only graph equivalent with $K_{1,k}$.
We also prove that $\{K_{1,k+1}, K^+_{1,k+1}\}$ is an element immediately after
$\{K_{1,k}, K^+_{1,k}\}$.
\begin{theorem}
Let $H\in\HH$
and let $k_1$ and $k_2$ be integers
with $3\le k_1\le k_2$.
If both $K_{1, k_1}\le H$ and $H\le K_{1, k_2}$ hold,
then $H=K_{1,k}$ or $H=K_{1,k}^+$
for some $k$ with $k_1\le k\le k_2$.
\label{stars}
\end{theorem}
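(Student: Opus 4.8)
The plan is to first pin $H$ down to a star or a subdivided star, and then reduce both required inequalities to comparisons between stars, where the earlier results do all the work. Since $H\le K_{1,k_2}$ and $K_{1,k_2}$ is a tree, Theorem~\ref{dim_tree} immediately forces $H$ to be a tree.

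To identify the shape of $H$, I would exhibit the canonical ``star coloring''. On vertices $x_0,x_1,\dots,x_t$, with $t$ chosen larger than both $k_2$ and the threshold guaranteed by $H\le K_{1,k_2}$, color $x_0x_i$ by $i$ and all remaining edges by color~$1$. This coloring uses $t$ colors and contains the rainbow star on $x_0x_1,\dots,x_0x_{k_2}$, hence a rainbow $K_{1,k_2}$; by the contrapositive of $H\le K_{1,k_2}$ it therefore contains a rainbow copy $G$ of $H$. Because $H$ is a tree, $G$ is a rainbow tree, and the point is that such trees are extremely rigid here: $G$ must contain $x_0$ (any edge missing $x_0$ has color~$1$, and a rainbow subgraph admits at most one such edge, which cannot form a connected tree of order at least~$4$); the edges at $x_0$ are distinctly colored star edges, while the single permitted color-$1$ edge can only attach one extra pendant to a leaf without creating a cycle. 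Hence $G$, and so $H$, is $K_{1,k}$ or $K^+_{1,k}$ for some $k$, and $k\ge 3$ since $H\in\HH$.

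For the bounds I would exploit the equivalence $K_{1,k}\equiv K^+_{1,k}$, which follows from Theorem~\ref{clms}; thus in either case $H\equiv K_{1,k}$. It then suffices to prove the elementary star comparison: for $a,b\ge 3$, $K_{1,a}\le K_{1,b}$ implies $a\le b$. Indeed Theorem~\ref{invariants}(3) gives $a=\Delta(K_{1,a})\le\Delta(K_{1,b})+1=b+1$, and the borderline $a=b+1$ is impossible, since $K_{1,b}\subsetneq K_{1,b+1}$ together with $K_{1,b+1}\le K_{1,b}$ would force $(K_{1,b+1},K_{1,b})=(K^+_{1,m},K_{1,m})$ by Theorem~\ref{clms}, whereas $K_{1,b+1}$ is not a subdivided star. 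Applying this comparison together with transitivity of $\le$ through $K_{1,k_1}\le H\equiv K_{1,k}$ yields $k_1\le k$, and through $K_{1,k}\equiv H\le K_{1,k_2}$ yields $k\le k_2$, which finishes the proof.

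I expect the main obstacle to be the shape analysis of the second paragraph, namely verifying that no rainbow tree other than $K_{1,j}$ and $K^+_{1,j}$ arises in the star coloring; this is exactly where the fact that all non-star edges lie in a single color class is used. The two bound arguments are then short, resting only on the maximum-degree inequality of Theorem~\ref{invariants}(3) and on Theorem~\ref{clms} to dispose of the off-by-one cases.
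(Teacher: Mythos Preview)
Your proposal is correct and follows essentially the same route as the paper: use the star coloring to force $H$ into the form $K_{1,k}$ or $K^+_{1,k}$, then bound $k$ via Theorem~\ref{clms}. The only differences are cosmetic: the paper colors the non-star edges with a fresh color $t$ rather than reusing color~$1$ (which avoids the small overlap with the edge $x_0x_1$ in your shape analysis), and it handles the two inequalities $k\ge k_1$, $k\le k_2$ by direct case applications of Theorem~\ref{clms} rather than first isolating the auxiliary statement ``$K_{1,a}\le K_{1,b}\Rightarrow a\le b$'' via Theorem~\ref{invariants}(3).
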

\begin{proof}
Note that since $H\le K_{1,k_2}$,
$H$ is a tree by Theorem~\ref{dim_monotone}.
\par
Since $H\le K_{1, k_2}$,
there exists a positive integer $t$ such that
every rainbow $H$-free edge-colored complete graph $(K, c)$
colored in $t$ or more colors is rainbow $K_{1, k_2}$-free.
We may assume $t\ge k_2+1$.
Introduce a set of $t$ vertices
$X=\{x_0, x_1,\dots, x_{t-1}\}$ and let
$K=K[X]$.
Define $c\colon E(K)\to\{1, 2,\dots, t\}$ by
\[
c(e)=
\begin{cases}
i & \text{if $e=x_0x_i\quad 1\le i\le t-1$}\\
t & \text{otherwise.}
\end{cases}
\]
Then $(K, c)$ is a complete graph edge-colored in $t$ colors,
and $K[x_0x_1, x_0x_2,\dots, x_0x_{k_2}]$
is a rainbow subgraph isomorphic to $K_{1, k_2}$.
This implies that $(K, c)$ contains a rainbow tree $T$
which is isomorphic to $H$.
\par
Since $T$ is rainbow,
$T$ contains at most one edge colored in $t$.
Since $T$ is a tree,
this yields that $H=K_{1,k}$ or $H=K_{1,k}^+$ for some $k$
with $3\le k\le t-1$.
In particular,
$H\le K_{1,k}$.
\par
If $k > k_2$,
then $K_{1,k_2}\subseteq H$.
This together with the hypothesis of $H\le K_{1, k_2}$
yields $k=k_2$ by Theorem~\ref{clms}.
This is a contradiction,
and hence we have $k\le k_2$.
\par
Assume $k < k_1$.
Then $K_{1,k}\subseteq K_{1, k_1}$.
On the other hand,
since $K_{1,k_1}\le H$ and $H\le K_{1,k}$,
we have $K_{1,k_1}\le K_{1,k}$.
These yield $k=k_1$ by Theorem~\ref{clms},
a contradiction.
Therefore,
we have $k_1\le k\le k_2$.
 by Theorem~\ref{clms}. 
\end{proof}
By setting $k_2=k_1$ and $k_2=k_1+1$ in theorem~\ref{stars},
we obtain the following corollary.
\begin{corollary}
For each integer $k$ with $k\ge 3$,
The equivalence class with respect to $\equiv$ in $\HH$ containing $K_{1,k}$
is $\{K_{1,k}, K_{1,k}^+\}$.
Moreover,
$\{K_{1, k+1}, K^+_{1, k+1}\}$ is immediately after
$\{K_{1,k}, K^+_{1, k}\}$ in $(\qHH, \le\,)$.
\end{corollary}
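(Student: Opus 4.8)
The plan is to derive both assertions as direct instances of Theorem~\ref{stars}, specialized to $k_2=k_1$ and to $k_2=k_1+1$ respectively; the theorem carries the structural content, so what remains is to set up the correct comparisons and interpret them poset-theoretically.

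For the first assertion I would first record that $K_{1,k}\equiv K_{1,k}^+$: since $K_{1,k}\subseteq K_{1,k}^+$ we have $K_{1,k}\le K_{1,k}^+$, and Theorem~\ref{clms} supplies the reverse $K_{1,k}^+\le K_{1,k}$. Hence both graphs lie in the same equivalence class. To see that no other graph belongs to it, I would take any $H\in\HH$ with $H\equiv K_{1,k}$, so that $K_{1,k}\le H$ and $H\le K_{1,k}$ both hold, and apply Theorem~\ref{stars} with $k_1=k_2=k$. This forces $H=K_{1,j}$ or $H=K_{1,j}^+$ with $k\le j\le k$, i.e.\ $j=k$, so $H\in\{K_{1,k},K_{1,k}^+\}$, as claimed.

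For the second assertion I would show that $\{K_{1,k+1},K_{1,k+1}^+\}$ is a successor of $\{K_{1,k},K_{1,k}^+\}$ in two steps. Strict comparability: $K_{1,k}\subseteq K_{1,k+1}$ gives $K_{1,k}\le K_{1,k+1}$, hence $\{K_{1,k},K_{1,k}^+\}\le\{K_{1,k+1},K_{1,k+1}^+\}$, and by the first assertion these are distinct classes, so the inequality is strict. Absence of an intermediate class: if some class $\HH'$ satisfies $\{K_{1,k},K_{1,k}^+\}\le\HH'\le\{K_{1,k+1},K_{1,k+1}^+\}$, I would pick any representative $H\in\HH'$, whence $K_{1,k}\le H$ and $H\le K_{1,k+1}$, and apply Theorem~\ref{stars} with $k_1=k$ and $k_2=k+1$ to get $H=K_{1,j}$ or $H=K_{1,j}^+$ with $j\in\{k,k+1\}$. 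Thus $\HH'$ coincides with one of the two classes in question, so nothing lies strictly between them.

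Since Theorem~\ref{stars} bears the entire burden, I do not anticipate a genuine mathematical obstacle; the only care required is bookkeeping. In particular, I must unpack ``immediately after'' as \emph{strict comparability together with the absence of a strictly intermediate class}, justify strictness by the distinctness of the two classes (guaranteed by the first assertion) against the antisymmetry of the induced partial order on $\qHH$, and make sure the boundary values $j=k$ and $j=k+1$ are each read off correctly from Theorem~\ref{stars}.
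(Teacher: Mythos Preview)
Your proposal is correct and follows exactly the approach the paper intends: the paper simply states that the corollary is obtained ``by setting $k_2=k_1$ and $k_2=k_1+1$ in Theorem~\ref{stars}'' without writing out any details, and your argument supplies precisely that bookkeeping. Your unpacking of ``immediately after'' and the use of transitivity to pass between representatives of an equivalence class are the appropriate elaborations.
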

\section{Concluding Remarks}
In this paper,
we have investigated a binary relation $\le$,
which gives a preorder among rainbow forbidden subgraphs.
The previous study~\cite{CLMS}
shows that there are not many kinds of pairs $(H_1, H_2)$ with
$H_1\le H_2$
under the condition that $H_1$ or $H_2$ is a subgraph of the other.
However,
By lifting this condition,
we have found a large variety of pairs.
Also,
we have introduced an equivalence relation $\equiv$ naturally defined from $\le$,
and converted $\le$ to a partial order on its equivalence classes.
We have studied the properties of this partial order.
\par
Our research only determines a small part of the partially ordered set $(\qHH,\le\,)$.
In particular,
though we have proved that $\{K_{1,k}, K^+_{1,k}\}$ is an equivalence class
in $\HH$,
we have not found any other non-singleton equivalence class.
Thus,
we raise the following problem.
\begin{problem}
Is there a non-singleton equivalence class of $\equiv$
other than $\{K_{1,k}, K^+_{1,k}\}$?
\end{problem}
\par
We have proved that $(\qHH, \le\,)$ has the minimum element,
and determined the elements immediately after it.
However,
we do not think it gives sufficient information to
grasp a large picture on the structure of $\qHH$.
As the next step,
we place the next question at the end of this paper.
\begin{problem}
Determine the elements immediately after
$\{C_3\}$,
$\{P_5\}$ and
$\{K_{1,4}, K^+_{1,4}\}$.
\end{problem}

\end{document}